\documentclass[10pt]{amsart}

\usepackage{amsmath,amsthm,amsfonts}
\usepackage{mathtools}
\usepackage{amssymb}
\usepackage{mathrsfs}
\usepackage{enumitem}
\usepackage{tikz-cd}
\usepackage{dsfont}
\usepackage[english]{babel} 
\usepackage{fullpage}
\usepackage{afterpage}
\usepackage{hyperref}
\usepackage[T1]{fontenc}
\usepackage[utf8]{inputenc}
\usepackage{lmodern}
\usepackage{graphicx} 
\usepackage{multicol} 
\usepackage{tikz} 
\usepackage{caption}
\usepackage{pgfplots} 
\usepackage{wrapfig}
\usepackage{array}
\usepackage{stackrel}
\usepackage{extarrows}
\usepackage{pgfplots}
\usetikzlibrary{decorations.markings}
\usepackage{mathtools}

\usepgfplotslibrary{polar}
\pgfplotsset{compat=newest}
\usetikzlibrary{calc}
\usetikzlibrary{graphs}
\usetikzlibrary{fit}
\usepgfplotslibrary{polar}
\pgfplotsset{compat=newest}
\usetikzlibrary{calc}

\newcommand{\Z}{\mathbb{Z}}
\newcommand{\G}{\Gamma}

\newcommand{\R}{\mathcal{R}}
\newcommand{\C}{\mathcal{C}}

\newcommand{\A}{{A_\Gamma}}

\newcommand{\Wh}{\mathrm{Wh}}

\newcommand{\Hom}{\mathrm{Hom}}

\newcommand{\FP}{\mathrm{FP}}
\newcommand{\Fsing}{\mathrm{sing}}

\newcommand{\MM}{\mathrm{MM}}

\newcommand{\lk}{\mathrm{lk}}
\newcommand{\st}{\mathrm{st}}
\newcommand{\Stab}{\mathrm{Stab}}
\newcommand{\Inn}{\mathrm{Inn}}
\newcommand{\PAut}{\mathrm{SPAut}}
\newcommand{\POut}{\mathrm{SPOut}}
\newcommand{\Aut}{\mathrm{Aut}}

\newcommand{\Who}{\mathrm{Wh}_\G^0}
\newcommand{\OO}{ \mathcal{O}}
\newcommand{\sil}{ \mathrm{sil}}
\newcommand{\boundellipse}[3]
{(#1) ellipse (#2 and #3)}
\newcommand{\abs}[1]{\left|#1\right|}

\newtheorem{teo}{Theorem}[section]
\newtheorem{lem}[teo]{Lemma}
\newtheorem{cor}[teo]{Corollary}
\newtheorem{prop}[teo]{Proposition}
\theoremstyle{remark}
\newtheorem{rem}[teo]{Remark}
\newtheorem{ej}[teo]{Example}
\theoremstyle{definition}
\newtheorem{defi}[teo]{Definition}

\theoremstyle{remark}
\newtheorem{ques}[teo]{Question}

\theoremstyle{plain}
\newtheorem{teoInt}{Theorem}

\theoremstyle{remark}
\newtheorem*{nota}{Remark}

\title{Actions with a strong fundamental domain and Sigma invariants of groups.}
\author{Peio Ardaiz Galé}
\author{Marcos Escartín Ferrer}
\author{Conchita Martínez Pérez}
\date{today}

\subjclass[2020]{Primary 20J06, 20F36; Secondary 57M07, 55P20}

\keywords{Sigma invariants, cohomological conditions, automorphisms of right angled Artin groups}

\begin{document}

\thanks{\noindent
Partially supported by the Spanish Government PID2021-126254NB-I00 and PID2024-155800NB-C32. The first named author is also supported by Universidad P{\'u}blica de Navarra (grant code: Plan de promoci{\'o}n de grupos. ``{\'A}lgebra. Aplicaciones''). The second and third named authors are also supported by Departamento de Ciencia, Universidad y Sociedad del 
Conocimiento del Gobierno de Arag{\'o}n (grant code: E22-23R: ``{\'A}lgebra y Geometr{\'i}a'')}

\begin{abstract} We show that if a group $G$ acts on a contractible $CW$-complex with a contractible strong fundamental domain and the stabilizers of the action satisfy certain properties, then the homological Sigma invariants of the group are either empty or dense in the character sphere. This was known for several families of groups like right angled Artin groups and pure symmetric automorphisms of free groups. We also exhibit some applications including the generalization of the previous fact to pure symmetric automorphisms of right angled Artin groups.
\end{abstract}

\maketitle

\section{Introduction}

One of the main ideas in geometric group theory is to understand groups via their actions. This is particularly useful when one is interested in cohomological finiteness properties, such as being of type $\mathrm{F}_n$ or $\mathrm{FP}_n$. In this setting, there are many results where a group $G$ acts nicely on a topological space, typically a contractible $CW$-complex, and, assuming that the cell stabilizers satisfy certain properties, it is possible to derive similar properties for the group $G$. See, for example, \cite{Brown}, where Brown proves some paradigmatic results along these lines.

This strategy was also used by Meier, Meinert, and VanWyk in \cite{MMVW} to compute the Sigma invariants  of right-angled Artin groups (RAAGs). The Sigma invariants of a group $G$ (also called BNSR invariants, after Bieri, Neumann, Renz, and Strebel; see Section \ref{sec:Sigma} for definitions) form an important family of geometric invariants that provide a complete description of the algebraic fibrations of $G$. Recall that a group $G$ algebraically fibers if there exists a homomorphism $G \to \mathbb{Z}$ with finitely generated kernel. If the kernel satisfies stronger finiteness properties, such as being of type $\mathrm{F}_n$ or $\mathrm{FP}_n$, one speaks of higher-dimensional fibrations. Sigma invariants consist of two families, one of homological and one of homotopical nature, the first is related to fibrations where the kernel is of type $\mathrm{FP}_n$, the second to fibrations where the kernel is of type $\mathrm{F}_n$. The notion of algebraic fibration was used by Kielak in \cite{Kielak} to extend to a purely group-theoretic setting previous breakthrough work by Agol in 3-manifold theory, in his work Kielak also showed that fibrations are connected to $\ell^{(2)}$-Betti numbers: if the group $G$ has an $n$-dimensional virtual homological  fibration, then $\beta^{(2)}_n(G)=0$; the converse holds if $G$ is virtually residually finite rationally solvable (cf. \cite{Kielak}, \cite{Sam}).  

Recently, Ershov and Zaremsky \cite{ErshovZaremsky} considered the group $G=\mathrm{POut}(F_n)$ of pure symmetric outer automorphisms  of a free group $F_n$ (recall that pure symmetric automorphisms are those sending each basis element to a conjugate of itself) and use
the action of $G$ on certain space to prove that for any $m\geq 0$ there is a dichotomy: the invariant $\Sigma^m$ is either  empty or dense.  This has very strong implications for the possible fibrations of the group. For example, in the case when the homological $\Sigma^m$ is empty, this means that there is no homomorphism $G \to \mathbb{Z}$ whose kernel is of type $\mathrm{FP}_m$; if the homological $\Sigma^m$ is dense, it means that a small perturbation of any homomorphism $G \to \mathbb{Z}$ has kernel of type $\mathrm{FP}_m$. Analogous statements hold for the homotopical version. In the same paper, Ershov and Zaremsky are able to characterize the values of $n$ and $m$ for which each of these possibilities happens. The space used in \cite{ErshovZaremsky} is the McCullough--Miller space constructed in \cite{MM} in a more general setting. In the particular case of the groups $\mathrm{POut}(F_n)$, one obtains an action  with free abelian stabilizers. 

This dichotomy is a common phenomenon, because in many of the cases where a complete description of the $\Sigma$ invariants of a group is available, it turns out that they are either empty or the result of removing finitely many linear subspaces from the sphere $S(G)$.  For example, Koban and Piggot \cite{KoPi} showed that this is the case for $\Sigma^1(\PAut(\A))$  where $\PAut(\A)$ is the group of pure symmetric automorphisms of the RAAG $A_\Gamma$ and Day and Wade \cite{DayWade} did the same for the external version $\POut(\A)$. However, there are also known examples for which this is not true, as Thompson group $F$ (cf. \cite[Theorem A]{BK})

In this paper, motivated by a question of Zaremsky, we show that the same dichotomy holds for the higher homological Sigma invariants of $\POut(\A)$.  The characterization of the values of $m$ for which the invariant $\Sigma^m$ is dense is given in terms of a poset called the  $\Gamma$-\emph{Whitehead} poset (see Section \ref{sec:MM}), the geometric realization of this poset is the  fundamental domain for the action of the group on a generalization of the McCullough-Miller space constructed previously by the first and last authors in \cite{ArMar}. 

To do this, we first prove a more general theorem that can be applied to other relevant families of groups. The objective is to give conditions, in terms of an action on a suitable space $X$, that imply that for a given group $G$ the (homological) Sigma invariants are either dense or empty, and also to show that using the fundamental domain of the action one can determine which of the two cases occurs. 

We assume the following general hypothesis:

\bigskip

\noindent{\bf Hypothesis (*)}: $G$ is a group acting cocompactly, admissibly, and with stabilizers of type $\mathrm{FP}_\infty$ on a contractible $CW$-complex $X$, with a contractible sub $CW$-complex $W\subseteq X$ which is a  strong fundamental domain.

\bigskip

Given a group action $G\curvearrowright X$ as in Hypothesis (*), we consider the following subcomplex of $W$:
$$W_{\Fsing}=\{\sigma\in W \mid G_\sigma \neq 1\}$$
where $G_\sigma$ is the stabilizer of the cell $\sigma$. We also denote
\[\Omega_\sigma=\{[\chi]\in S(G)\mid[\chi|_{G_\sigma}]\in\Sigma^n(G_\sigma,\Z)\}\]
and show:

\begin{teoInt}\label{teo:A}
Let $G\curvearrowright X$ be as in Hypothesis (*), and assume that for each cell $\sigma\in W_\Fsing$, $\Omega_\sigma\subseteq S(G)$ is dense.
Then the following are equivalent:
\begin{itemize}
  \item[(i)] $\Sigma^n(G,\mathbb{Z})$ is dense in $S(G)$,
    \item[(ii)] $\Sigma^n(G,\mathbb{Z})\cap -\Sigma^n(G,\mathbb{Z})$ is dense in $S(G)$,
    \item[(iii)] $\Sigma^n(G,\mathbb{Z})\cap -\Sigma^n(G,\mathbb{Z})\neq\emptyset$,
    \item[(iv)] $G$  fibers homologically at degree $n$,
    \item[(v)] $W_{\Fsing}$ is $(n-1)$-acyclic.
\end{itemize}
\end{teoInt}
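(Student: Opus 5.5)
The proof will run around the cycle of implications (i)$\Rightarrow$(ii)$\Rightarrow$(iii)$\Rightarrow$(iv)$\Rightarrow$(v)$\Rightarrow$(ii)$\Rightarrow$(i). The main technical tool is Brown's criterion in its Sigma-invariant form, applied to the action $G\curvearrowright X$ and the spectral sequence of that action.

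For a character $\chi$, we look at the $G_\chi$-invariant filtration of $X$. Equivalently, we use Bieri–Renz: $[\chi]\in\Sigma^n(G,\Z)$ if and only if $\Z$ is of type $\FP_n$ over the monoid ring $\Z G_\chi$, or equivalently the Novikov homology $H_i(G;\widehat{\Z G}_{-\chi})$ vanishes for $i\le n$. The key formula is a Meinert-type criterion: if $[\chi|_{G_\sigma}]\in\Sigma^n(G_\sigma,\Z)$ for every cell $\sigma$ with $\chi|_{G_\sigma}\neq 0$, then $[\chi]\in\Sigma^n(G,\Z)$ if and only if the subcomplex of cells whose stabilizer lies in $\ker\chi$ is suitably $(n-1)$-acyclic after passing to the strong fundamental domain.

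Since $W$ is a strong fundamental domain, $X=G\times W/\!\sim$. Using Novikov coefficients, the equivariant chain complex of $X$ decomposes cell by cell as $\bigoplus_{\sigma\in W}\widehat{\Z G}_{-\chi}\otimes_{\Z G_\sigma}\Z$. For a cell with $\chi|_{G_\sigma}\neq 0$ and $[\pm\chi|_{G_\sigma}]\in\Sigma^n(G_\sigma)$, this term is acyclic in the relevant range. The cells with $G_\sigma=1$ contribute free modules. Cells with $G_\sigma\neq1$ but $\chi|_{G_\sigma}=0$ contribute non-acyclic modules, and we first choose characters to avoid this. So we restrict to characters $\chi$ that do not vanish on any nontrivial stabilizer, which form a dense open subset (the complement of finitely many proper subspaces, by cocompactness), and that satisfy $\chi\in\Omega_\sigma\cap-\Omega_\sigma$ for every $\sigma\in W_\Fsing$.

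For (v)$\Rightarrow$(ii), the plan needs a generic intersection lemma. Density of each $\Omega_\sigma$ (finitely many $\sigma$, by cocompactness) together with openness of Sigma invariants should give that $\bigcap_\sigma(\Omega_\sigma\cap-\Omega_\sigma)$ is dense. This requires knowing that each $\Omega_\sigma$ is open, and that $\Omega_\sigma\cap -\Omega_\sigma$ is dense. The latter I would obtain by arguing that density of $\Omega_\sigma$ forces density of its symmetric part, via the $\FP_\infty$ stabilizers and the Sikorav/Bieri–Renz symmetry of Novikov homology for dense sets; this is where the hypothesis may need to be used carefully. For such $\chi$, the spectral sequence collapses so that Novikov homology of $G$ in degrees $\le n$ is computed by $\widehat{\Z G}_{-\chi}\otimes_{\Z G}C_*(X,GW_\Fsing)$. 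This is the cellular chain complex of the pair $(W,W_\Fsing)$ tensored with the Novikov ring, since cells outside $GW_\Fsing$ are free. Contractibility of $W$ reduces this to $\tilde H_{*-1}(W_\Fsing)\otimes\widehat{\Z G}$. Hence (v) gives vanishing of Novikov homology for both $\chi$ and $-\chi$, so $\pm[\chi]\in\Sigma^n$.

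The remaining implications are short. (ii)$\Rightarrow$(i) and (ii)$\Rightarrow$(iii) are trivial. (iii)$\Rightarrow$(iv) holds because $\Sigma^n\cap-\Sigma^n$ is open, so it contains a rational, hence discrete, character, whose kernel is then $\FP_n$ by Bieri–Renz. (iv)$\Rightarrow$(v) runs the same computation in reverse: a fibration gives $\pm\chi\in\Sigma^n$, and by openness we perturb to a generic $\chi$ as above, still in $\Sigma^n\cap -\Sigma^n$. Then Novikov homology vanishing forces $\tilde H_{i}(W_\Fsing)\otimes\widehat{\Z G}_{-\chi}=0$ for $i\le n-1$; since $\Z\to\widehat{\Z G}$ is faithfully flat enough on free abelian groups, this gives $(n-1)$-acyclicity. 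For (i)$\Rightarrow$(v), density of $\Sigma^n$ lets us choose a generic $\chi$ with $[\chi]\in\Sigma^n$; the one-sided Novikov vanishing already yields (v) by the same argument.

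I expect the main obstacle to be justifying the spectral sequence reduction rigorously. Specifically, one must show that for generic $\chi$ the stabilizer terms with $G_\sigma\ne1$ are Novikov-acyclic in degrees $\le n$. The plan for this is Shapiro's lemma, $\widehat{\Z G}_{-\chi}\otimes_{\Z G_\sigma}P_*$, with $P_*$ a finite type resolution over $G_\sigma$, combined with the fact that $\widehat{\Z G}$ is a flat extension of $\widehat{\Z G_\sigma}$ restricted suitably. One must also control the symmetry needed to get $\Omega_\sigma\cap-\Omega_\sigma$ dense.
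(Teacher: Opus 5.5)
\textbf{Verdict.} Your proof is correct in substance but follows a genuinely different route from the paper.

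\textbf{How the paper argues.} It works only with discrete $\chi$ satisfying $\pm[\chi|_{G_\sigma}]\in\Sigma^n(G_\sigma,\Z)$ (Theorem~\ref{teo:charsigma}), and proves the two directions with different tools.
\begin{itemize}
\item The obstruction direction uses $N=\ker\chi$, a field $K$, the PID $R=K[t^{\pm1}]$ and its fraction field $F$. Finite generation of $H_{\le n}(N)$ and of $H_{\le n}(N\cap G_\sigma)$ kills $H_{\le n}(G,F)$ and $H_{\le n}(G_\sigma,F)$. Brown's spectral sequence then gives $H_{j+1}(G,F)\cong\overline{H}_j(W_\Fsing,K)\otimes_K F$.
\item The positive direction shows $X_\Fsing$ is $(n-1)$-acyclic via $C_\bullet(X,X_\Fsing)=\Z G\otimes_\Z C_\bullet(W,W_\Fsing)$. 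It then applies the Schmitt--Meinert criterion (Theorem~\ref{teo:MMVW}) to $G\curvearrowright X_\Fsing$.
\end{itemize}

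\textbf{How you argue, and what each route buys.} You run a single spectral sequence with Novikov coefficients and invoke the Sikorav--Bieri criterion. This treats both directions at once: for every $[\chi]\in\bigcap_\sigma\Omega_\sigma$, not necessarily discrete, you get $H_p(G;\widehat{\Z G}_{-\chi})\cong\widehat{\Z G}_{-\chi}\otimes_\Z\overline{H}_{p-1}(W_\Fsing)$ for $p\le n$.
\begin{itemize}
\item Your criterion is one-sided, so it is sharper than what the paper proves. It shows that (iii) could be weakened to $\Sigma^n(G,\Z)\neq\emptyset$ with no symmetry assumption.
\item It also bypasses Theorem~\ref{teo:MMVW} and the passage to $X_\Fsing$.
\item The paper's route, in exchange, avoids Novikov rings and Sikorav's theorem entirely.
\item The paper's positive half also has a homotopical analogue (Remark~\ref{rem:homotopic}, Theorem~\ref{teo:homotopicA}), which your homological computation does not provide.
\end{itemize}

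\textbf{Three points to tighten.}

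First, density of $\bigcap_\sigma(\Omega_\sigma\cap-\Omega_\sigma)$ needs no ``symmetry of Novikov homology'', and there is none in general. The set $\Omega_\sigma$ is open: it is the preimage of the open set $\Sigma^n(G_\sigma,\Z)$ under restriction, defined on the open set of characters not vanishing on $G_\sigma$. So $\pm\Omega_\sigma$ are open and dense, and a finite intersection of such sets is open and dense, exactly as in Lemma~\ref{lem:density}.

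Second, for the vanishing of $H_q(G_\sigma;\widehat{\Z G}_{-\chi})$ with $q\le n$, do not appeal to flatness of $\widehat{\Z G}$ over the Novikov ring of $G_\sigma$; that flatness is not evident. Instead:
\begin{itemize}
\item write $\widehat{\Z G}\otimes_{\Z G_\sigma}P_\ast$ as $\widehat{\Z G}$ tensored over the Novikov ring of $(G_\sigma,\chi|_{G_\sigma})$ with that ring's base change of $P_\ast$;
\item note that this Novikov ring is a subring of $\widehat{\Z G}_{-\chi}$;
\item base-change a partial chain contraction, up to degree $n$, of the inner free complex. It exists because the inner complex is free and exact in degrees $\le n$, by Sikorav's criterion for $G_\sigma$.
\end{itemize}

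Third, $\overline{H}_i(W_\Fsing)$ may have torsion, so ``faithfully flat on free abelian groups'' is not the right justification. What you need is that $\widehat{\Z G}$ is torsion-free and $\widehat{\Z G}/p\widehat{\Z G}\neq 0$ for every prime $p$. Then $\widehat{\Z G}\otimes_\Z A=0$ forces $A=0$ for every finitely generated abelian group $A$.
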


The hypothesis that the set $\Omega_\sigma\subseteq S(G)$ is dense can be difficult to check in practice, in Lemma \ref{lem:densitycond} we have listed a series of easier conditions that imply it.

We also get, see Theorem \ref{teo:homotopicA}, a weaker version of Theorem \ref{teo:A} for the homotopical Sigma invariants: assuming that $W_\Fsing$ is simply connected and that for any $\sigma\in W_\Fsing$, the set 
\[\Omega^{t}_\sigma=\{[\chi]\in S(G)\mid[\chi|_{G_\sigma}]\in\Sigma^n(G_\sigma)\}\]
is dense in $S(G)$, then the following are equivalent:
\begin{itemize}
     \item[(i)] $\Sigma^n(G)$ is dense in $S(G)$,
       \item[(ii)] $\Sigma^n(G)\cap -\Sigma^n(G)$ is dense in $S(G)$,
    \item[(iii)] $\Sigma^n(G)\cap -\Sigma^n(G)\neq\emptyset$,
    \item[(iv)] $G$  fibers homotopically at degree $n$,
    \item[(v)] $W_{\Fsing}$ is $(n-1)$-connected.
\end{itemize}

Sometimes, the Sigma invariants are symmetric, i.e. $\Sigma^n(G,\Z)=-\Sigma^n(G,\Z)$ and $\Sigma^n(G)=-\Sigma^n(G)$, this is the case when the group has a generating system $X$ such that $f:X\to G$, $f(a)=a^{-1}$ extends to a well defined group isomorphism \cite[Definition 2.4]{ErshovZaremsky}. For those groups, item (iii) can be shorten to $\Sigma^n(G,\mathbb{Z})\neq\emptyset$, or the analogous homotopical formulation.

Although our original motivation was to apply Theorem \ref{teo:A} to pure symmetric automorphisms of RAAGs, it turns out that it can be used in many other cases. We show:


\begin{teoInt}\label{teo:families}
    For the following families of groups $G$ there is an action that satisfies the hypotheses of Theorem \ref{teo:A}:
\begin{itemize}
 \item[(i)] Fouxé-Rabinovitch automorphisms groups of free products of $\mathrm{FP}_\infty$ such that the factors have dense Sigma invariants. In this case, if $n$ is the number of factors, $\Sigma^k(G,\Z)$ is dense in $S(G)$ if and only if $k< n-2$ (cf. Corollary \ref{cor:FR}),
 
\item[(ii)] groups of symmetric pure automorphisms of right-angled Artin groups (cf.  Theorem \ref{teo:firstPartPOutRAAGs}),

    \item[(iii)] certain fundamental groups of simple complexes of groups (cf.  Theorem \ref{theorem:SimpleComplexGroups}),

     \item[(iv)] Artin groups satisfying the $K(\pi,1)$-conjecture (cf.  Theorem \ref{theorem:SigmaArtin}),
    
    \item[(v)] graph products of $\mathrm{FP}_\infty$ groups that have dense Sigma invariants (cf.  Theorem \ref{theorem:SigmaGraphProd}).
\end{itemize}
\end{teoInt}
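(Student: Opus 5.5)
Theorem \ref{teo:families} is a collection of five separate applications, so the plan is to treat each family in turn. For each, we exhibit a contractible $CW$-complex $X$ with a cocompact, admissible action and a contractible strong fundamental domain $W$. We then verify that the stabilizers are of type $\mathrm{FP}_\infty$ and that $\Omega_\sigma$ is dense for every $\sigma\in W_\Fsing$, after which Theorem \ref{teo:A} applies directly. For the density of $\Omega_\sigma$ we will rely on the sufficient conditions of Lemma \ref{lem:densitycond}. The typical situation is that the restriction map $\Hom(G,\mathbb{R})\to\Hom(G_\sigma,\mathbb{R})$ has a tractable image, for instance surjective or with image meeting $\Sigma^n(G_\sigma,\Z)$ in an open dense set. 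The preimage of an open dense subset of the sphere of $G_\sigma$ under a linear surjection is then open dense in $S(G)$, apart from the characters vanishing on $G_\sigma$, which form a nowhere dense subspace.

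For (ii) we use the generalized McCullough--Miller space of \cite{ArMar}, on which $\POut(\A)$ acts with fundamental domain the realization of the $\Gamma$-Whitehead poset. Stabilizers there are, up to finite index or direct factors, free abelian groups or products of lower-complexity groups of the same kind, so density of their $\Sigma^n$ (dense, or all of the sphere in the abelian case) follows by induction or directly. For (i) we use the Guirardel--Levitt/McCullough--Miller outer space of the free product. Its stabilizers are built from the factors $G_i$ and their direct products with free abelian groups, so density passes through via the product formulas for $\Sigma$-invariants (Meinert's inequality). The count ``$k<n-2$'' then comes from identifying $W_\Fsing$ with a complex homotopy equivalent to a wedge of $(n-3)$-spheres. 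For (iii) we take the development (basic construction) of a simple complex of groups. By definition this has a strong fundamental domain, namely the underlying complex, and the hypotheses of Theorem \ref{theorem:SimpleComplexGroups} are designed to provide the rest. Case (iv) is the instance of (iii) given by the Deligne/Charney--Davis complex of spherical parabolic subgroups, where contractibility is exactly the $K(\pi,1)$-conjecture. Spherical Artin groups have nontrivial centre, so their $\Sigma$-invariants are dense (a central element makes $\chi$ nonvanishing on it suffice). Case (v) is the Davis complex for graph products, with vertex groups replaced by the given $\mathrm{FP}_\infty$ groups; its stabilizers are direct products of vertex groups over cliques, and a product formula again gives density.

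We expect the main obstacle to be the density of $\Omega_\sigma$ in $S(G)$ rather than in $S(G_\sigma)$. One must check that restriction of characters from $G$ to each stabilizer is surjective, or at least has image not contained in the complement of $\Sigma^n(G_\sigma,\Z)$. For $\POut(\A)$ this requires an explicit description of the abelianization and of the stabilizers of the \cite{ArMar} complex in terms of partial conjugations. We would first try to show that every stabilizer is generated by partial conjugations that are linearly independent in $H_1(G;\mathbb{R})$.
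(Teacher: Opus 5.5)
Your plan is essentially the paper's: the McCullough--Miller complex with strong fundamental domain $|\Wh_n|$ and singular part $|\Wh_n^0|$ (a wedge of $(n-3)$-spheres by \cite{BMcCMM}) for (i), the complex $\MM_\G$ of \cite{ArMar} for (ii), the development of the simple complex of groups for (iii), the Deligne and Davis complexes for (iv) and (v), and Lemma \ref{lem:densitycond} for the density of each $\Omega_\sigma$ in $S(G)$. The step you single out as the main obstacle is where the paper actually does its work, case by case:
in (ii) the stabilizers are exactly the free abelian groups $\Stab(\tau)$, so no induction is needed (your ``lower-complexity'' alternative would not have worked anyway, since such groups can have empty $\Sigma^n$), and they inject into $G_\mathrm{ab}$ by the Koban--Piggott presentation, as you propose;
in (i) the injectivity of $(G_\sigma)_\mathrm{ab}\to G_\mathrm{ab}$ is Lemma \ref{lem:stabFR}, proved by passing to $\mathrm{OFR}$ of the free product of the abelianized factors and using Gilbert's presentation;
in (v) it holds because $\mathcal{G}_\Delta$ is a retract of $\mathcal{G}_\G$;
and in (iv) the restriction $S(A_\G)\to S(A_\Delta)$ need not be surjective (odd labels outside $\Delta$ can identify generators of $\Delta$ in $(A_\G)_\mathrm{ab}$), so density of $\Sigma^n(A_\Delta,\Z)$ in $S(A_\Delta)$ is not what is used, and one needs Lemma \ref{lem:densitycond}(ii) together with the fact that the centre of $A_\Delta$, generated by Garside-type positive words, has infinite image in $(A_\G)_\mathrm{ab}$.
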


For  these families, the fundamental domain of the action depends on the construction and can be more or less explicitly described.  
In the case of the groups $\POut(A_\G)$ where $A_\G$ is a RAAG, the singular subcomplex of the fundamental domain is the $\G$-Whitehead poset $|\Who|$ that was constructed in \cite{ArMar} and we describe in Section \ref{sec:MM}. This complex can be rather complicated, however in some cases, as for example if the defining graph $\G$ is a tree, the connectivity of its geometric realization is much easier to describe. We show

\begin{teoInt}\label{teo:POutRAAGs}
    Let $G=\POut(\A)$ be the pure symmetric outer-automorphism group of a \textup{RAAG} $\A$. Then, for any $k$, $\Sigma^k(G,\Z)$ is  dense in $S(G)$ if and only if  $|\Who|$ is $(k-1)$-acyclic and in other case it is empty. Moreover
    \begin{itemize}
         \item[(i)] If the support graph $\Delta_v$ has two or more connected components for any $v\in\G$, then $\Sigma^{\infty}(G,\Z)$ is dense in $S(G)$.
        \item[(ii)] If $\G$ is a tree, the only cases when $\Sigma^k(G,\mathbb{Z})$ is empty are:
    \begin{itemize}
        \item[$\bullet$] $\G$ is a star with $n$ leaves and  $k\geq n-2$, or
        \item[$\bullet$] $\G$ consists of two stars $S_1,S_2$ with $n_1$ and $n_2$ leaves respectively, linked by an edge between the two internal nodes and $k\geq n_1+n_2-2$.
    \end{itemize}
        \end{itemize}
\end{teoInt}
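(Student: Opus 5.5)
The plan is to derive Theorem \ref{teo:POutRAAGs} from Theorem \ref{teo:A}, applied to the action of $G=\POut(\A)$ on the generalized McCullough--Miller space of \cite{ArMar}. That action is the one in Theorem \ref{teo:families}(ii).

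\textbf{Step 1: the hypotheses of Theorem \ref{teo:A}.} I will recall from \cite{ArMar} that this space $X$ is contractible and that the action is cocompact and admissible. The strong fundamental domain $W$ is the geometric realization of an enlarged Whitehead poset, which is contractible because it has a cone point. The stabilizers are free abelian, so in particular of type $\FP_\infty$. The singular part $W_\Fsing$ is exactly $|\Who|$.

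For each $\sigma\in W_\Fsing$ the stabilizer $G_\sigma$ is a nontrivial free abelian group, hence $\Sigma^n(G_\sigma,\Z)=S(G_\sigma)$. So $\Omega_\sigma$ is the complement of the great subsphere of characters vanishing on $G_\sigma$. This set is dense as long as $G_\sigma$ is not killed by every character, i.e. as long as $G_\sigma\to G^{ab}\otimes\mathbb{R}$ is nonzero. I expect to verify this by the criterion of Lemma \ref{lem:densitycond}: the stabilizers are generated by partial conjugations, and these are independent in the abelianization of $\POut(\A)$ by the Day--Wade description.

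\textbf{Step 2: the dichotomy and item (i).} Theorem \ref{teo:A} gives that $\Sigma^k(G,\Z)$ is dense if and only if $|\Who|$ is $(k-1)$-acyclic. For the emptiness half I use that $\POut(\A)$ has symmetric Sigma invariants, since inverting all partial conjugations defines an automorphism. So if $\Sigma^k\neq\emptyset$, then $\Sigma^k\cap-\Sigma^k\neq\emptyset$, and by (iii)$\Rightarrow$(i) the invariant is dense. Hence $\Sigma^k$ is either dense or empty.

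For (i), if every $\Delta_v$ has at least two components, I would show that $|\Who|$ is contractible. The natural route is to exhibit a minimum or maximum element, or a Quillen-fiber/conical retraction $f\le \mathrm{id}$, using the partition into components of each $\Delta_v$. Contractibility makes it $k$-acyclic for every $k$, so $\Sigma^\infty$ is dense.

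\textbf{Step 3: trees, item (ii).} This is the main obstacle. When $\G$ is a tree, $\Delta_v$ is $\G$ minus the star of $v$, so its components are the branches at distance at least $2$. A vertex $v$ has $\Delta_v$ connected (or empty) only when all of $\G\setminus\st(v)$ lies in one branch.

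I would split into cases.
\begin{itemize}
\item If no vertex has connected nonempty $\Delta_v$, argue as in (i).
\item If $\G$ is a star with $n$ leaves, then leaves have connected $\Delta_v$ and the centre has empty $\Delta_v$. I expect $|\Who|$ to reduce to the classical Whitehead poset of $F_n$, whose realization is homotopy equivalent to a wedge of $(n-2)$-spheres (McCullough--Miller). This gives the threshold $k\geq n-2$, matching Ershov--Zaremsky.
\item For two stars joined along an edge, I expect a join decomposition of the two star posets. This would give a wedge of $(n_1+n_2-2)$-spheres.
\item In the remaining trees, the vertices with connected $\Delta_v$ are only some of the leaves. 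I would try to show that $|\Who|$ is contractible by collapsing along a vertex whose $\Delta_v$ is disconnected, which should supply a cone point.
\end{itemize}

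Nonvanishing of the top homology in the star cases is what makes the invariant empty there, so these need an explicit nontrivial sphere. I would first attempt the join/contractibility claims via Quillen's fiber lemma on the projection forgetting the disconnected-support vertices.
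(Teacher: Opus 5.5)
\textbf{Verdict: genuine gap.} Your Steps 1 and 2 reproduce the paper's argument for the dichotomy: Theorem~\ref{teo:A} for the action on $\MM_\G$, free abelian stabilizers injecting into the abelianization, and symmetry of the invariants. The problem is that (i) and (ii) are only sketched, and the sketch is missing the mechanism that makes them work.

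\textbf{Item (i).} The key fact is that for each connected component $L$ of $\Delta_v$, the image of $C^v_L=\prod_{A\in L}C^v_A$ in $\POut(\A)$ is central \cite[Proposition 2.9]{DayWade}.
\begin{itemize}
\item Suppose $\Delta_v$ has components $L_1,\dots,L_t$ with $t\ge 2$. Let $\delta$ be the vertex type whose only nontrivial based partition is $\{\{v\},L_1,\dots,L_t\}$. Then $\delta$ lies in $\Who$ and $\Stab(\delta)$ is central.
\item By Lemma~\ref{lem:comp}, $\delta$ is therefore compatible with \emph{every} vertex type.
\item Hence $f(\tau)=\tau\vee\delta$ is a poset self-map of $\Who$ with $\tau\le f(\tau)\ge\delta$, and Quillen's lemma gives contractibility.
\end{itemize}
Note that the comparison is $f\ge\mathrm{id}$, not $f\le\mathrm{id}$, and $\Who$ has no minimum. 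This argument needs only \emph{one} such $v$; that is how (i) is meant, and it is what you need in (ii). Under your ``every $v$'' reading, (i) never applies to a tree with at least three vertices, because leaves always have connected $\Delta_v$.

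\textbf{Item (ii), description of $\Delta_v$.} $\Delta_v$ is not $\G-\st(v)$. Its vertices are the components of $\G-\st(v)$, and two are joined when one is shared for $v$ and some vertex of the other. In a tree, SIL pairs are exactly pairs at distance $2$ through a common neighbour of valence at least $3$ (Remark~\ref{rem:siltree}). So the components of $\Delta_v$ correspond to the non-leaf neighbours of $v$. A tree with at least three non-leaf vertices has a vertex with two non-leaf neighbours, hence $|\Who|$ is contractible (Lemma~\ref{lem:3notLeaves}). Your claim that in these trees only leaves have connected $\Delta_v$ is false, though not needed: in the path with vertices $1,\dots,5$ in order, $\Delta_2$ is a single vertex.

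\textbf{Item (ii), homotopy types.} More seriously, your homotopy types are off by one.
\begin{itemize}
\item \emph{Star case.} $|\Wh_n^0|$ has dimension $n-3$ and is Cohen--Macaulay, so it is a wedge of $(n-3)$-spheres. A wedge of $(n-2)$-spheres would make $\Sigma^{n-2}(G,\Z)$ dense, contradicting the threshold $k\ge n-2$ you quote.
\item \emph{Two-star case.} The join is not of ``the two star posets''. You must split $V(\G)=\lk(r_1)\sqcup\lk(r_2)$, so each centre is grouped with the \emph{other} star's leaves; indeed $r_2$ forms SIL pairs with the leaves of $S_1$ through $r_1$. Then check that no SIL pair crosses the split, so that $\Wh_\G\cong W_1\times W_2$. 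Quillen's product lemma then gives $|\Who|\simeq|\Wh^0_{n_1+1}|*|\Wh^0_{n_2+1}|$. This is a wedge of $(n_1+n_2-3)$-spheres, which gives emptiness exactly for $k\ge n_1+n_2-2$. Your $(n_1+n_2-2)$-spheres would again shift the threshold by one.
\end{itemize}
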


We treat the case of $\Sigma^1$ separately, because the fact that for groups of pure symmetric automorphisms of RAAGs $\Sigma^1$ is either dense or empty was previously known \cite{DayWade}. Using results of the second named author in \cite{Escartin} one can also characterize when $\Sigma^1$ is dense and show that for these groups this is equivalent to the vanishing of the first  $\ell^{(2)}$-Betti number. In Theorem \ref{teo:sigma1} we recall this characterization and add a couple of equivalent conditions in terms of the poset $\Who$.

In \cite{DayWade}, Day and Wade use Sigma invariants to  characterize when the group $\POut(A_\G)$ is isomorphic to a RAAG. There is a well-known action for RAAGs $A_\Theta$ that satisfies Hypothesis (*), namely the action on the Deligne complex, for which the singular subcomplex of the fundamental domain is the flag complex $\widehat{\Theta}$ associated to the defining graph.
This means that in this case we can apply Theorem \ref{teo:A} either for the complex $|\Wh_\G^0|$ or else for $\widehat{\Theta}$ and as a by-product we deduce that for any $n$, one is $n$-acyclic if and only if the other is. So a natural question is whether they are homotopy equivalent and we show that this is indeed the case.

\begin{teoInt}\label{teo:homotopWhi-RAAG}
    If $\POut(\A)$ is a \textup{RAAG} $A_\Theta$, then there is a homotopy equivalence $|\Wh_\G^0|\simeq \widehat{\Theta}$. 
\end{teoInt}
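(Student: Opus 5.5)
Suppose $\POut(\A)\cong A_\Theta$. The plan is to compare both complexes with a third space that visibly has the homotopy type of each: the nerve of a cover of $|\Who|$ indexed by the vertices of $\Theta$. First we identify the generators. By the result of Day and Wade \cite{DayWade}, when $\POut(\A)$ is a RAAG the standard generating set of partial conjugations $\{c_{v,C}\}$ (a vertex $v\in\G$ and a component $C$ of $\G\setminus\st(v)$, taken modulo inner automorphisms) gives the RAAG structure. The vertices of $\Theta$ are then these partial conjugation classes, and two of them are adjacent exactly when they commute. Hence the simplices of the flag complex $\widehat{\Theta}$ are the sets of pairwise commuting partial conjugations, that is, the free abelian subgroups generated by such sets.

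Next we use how $\Who$ is built (Section \ref{sec:MM}, \cite{ArMar}). Its elements are $\G$-Whitehead partitions, or compatible families of them, and the stabilizer in $\POut(\A)$ of the corresponding cell of the McCullough--Miller type space is the free abelian group generated by partial conjugations compatible with the partition. For each vertex $x$ of $\Theta$ (a partial conjugation $c_{v,C}$), let $U_x\subseteq|\Who|$ be the subcomplex spanned by the elements whose stabilizer contains $x$. Since $W_\Fsing=|\Who|$, every element has nontrivial stabilizer, so the $U_x$ cover $|\Who|$. We then show two things.
\begin{itemize}
\item[(a)] Every nonempty intersection $U_{x_0}\cap\dots\cap U_{x_k}$ is contractible. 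The intended argument is that this intersection is an upward or downward closed subposet with a unique extremal element, namely the partition determined by the joint action of the $x_i$. That gives a cone point, or a Quillen fibre argument.
\item[(b)] $U_{x_0}\cap\dots\cap U_{x_k}\neq\emptyset$ if and only if the $x_i$ pairwise commute. This is where the RAAG hypothesis enters: commuting partial conjugations can be realized simultaneously by a single compatible family of Whitehead partitions, so they lie in a common stabilizer. Conversely, a common stabilizer is abelian, so the $x_i$ commute.
\end{itemize}

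Given (a) and (b), the Nerve Lemma gives $|\Who|\simeq N(\{U_x\})$. By (b) the nerve has the same vertices as $\widehat{\Theta}$ and its simplices are the pairwise commuting sets, so it is exactly the flag complex $\widehat{\Theta}$. This proves the statement. Note that the nerve is automatically flag by (b), which matches the hypothesis that we are dealing with a RAAG.

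The main obstacle is (b), in particular pairwise-implies-jointly: the fact that pairwise commutation yields a common fixed cell. Our first approach would be a Helly-type property from CAT(0)/cubical geometry, or from the poset structure of compatible partitions. Pairwise compatible $\G$-Whitehead partitions should be simultaneously compatible, so they assemble into a single element of $\Who$. We expect to need the Day--Wade characterization of when $\POut(\A)$ is a RAAG, namely that no non-commuting pair of partial conjugations generates a relation. This rules out "overlapping" partial conjugations whose supports would obstruct a joint partition. If the cover-by-stabilizers approach fails, the fallback is to compare the two fundamental domains directly via Theorem \ref{teo:A}'s proof. That would only give the acyclicity equivalence, so we would instead aim for the nerve argument applied to the dual cover of $\widehat{\Theta}$ by the stars of maximal partitions.
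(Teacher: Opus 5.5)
Your identification of $\Theta$ is wrong, and the covering step has a genuine gap.

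\textbf{The vertices of $\Theta$.} They are not the single-component partial conjugations $C^v_C$. For each $v$, the product of all $C^v_C$ over the components of $\G-\st(v)$ is conjugation by $v$, hence trivial in $\POut(\A)$. No RAAG basis satisfies such a relation. Day and Wade's basis (Definition~\ref{def:DayWadeRAAG}) consists of:
\begin{itemize}
\item the $C^a_{L(e)}$, for edges $e$ of chosen maximal trees of $\Delta_a$ with leaf base points;
\item the $C^a_T$, for non-preferred trees $T$.
\end{itemize}
With these generators your (a) and (b) do hold. Let $\tau(x)$ be the vertex type whose only nontrivial based partition is the two-petal one determined by $x$. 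Then $U_x=\{\tau\mid\tau\geq\tau(x)\}$, and a nonempty intersection of the $U_{x_i}$ has minimum $\tau(x_0)\vee\cdots\vee\tau(x_k)$. Nonemptiness is pairwise compatibility, i.e.\ pairwise commutation, by Lemmas~\ref{lem:lub} and~\ref{lem:comp}. So the step you flagged as the main obstacle is actually the easy one, and the nerve lemma gives $\bigcup_x U_x\simeq\widehat{\Theta}$.

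\textbf{The gap.} It is the sentence ``every element has nontrivial stabilizer, so the $U_x$ cover $|\Who|$''. A nontrivial $\Stab(\tau)$ need not contain any vertex of $\Theta$. For instance, let $\G$ be the tree with a vertex $a$ adjacent to $w_1,w_2,w_3$, and each $w_i$ adjacent to a leaf $z_i$.
\begin{itemize}
\item All support graphs are forests.
\item $\G-\st(a)$ has components $\{z_1\},\{z_2\},\{z_3\}$, and $\Delta_a$ has no edges. With preferred tree $\{z_3\}$, the vertices of $\Theta$ based at $a$ are $C^a_{\{z_1\}}$ and $C^a_{\{z_2\}}$.
\item The vertex type whose only nontrivial based partition is $\{\{a\},\{z_3\},\{z_1,z_2\}\}$ has stabilizer $\langle C^a_{\{z_1\}}C^a_{\{z_2\}}\rangle$. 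This contains no vertex of $\Theta$: compare images in the abelianization, which is free on the images of the vertices. So this vertex type lies in no $U_x$.
\end{itemize}
Thus $\bigcup_xU_x$ is in general a proper subposet of $\Who$. Showing that it nevertheless has the right homotopy type is the real content of the theorem.

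\textbf{How the paper handles this.} It reverses the direction: it maps cliques to vertex types, $\phi(\sigma)=\bigvee_{x\in\sigma}\tau(x)$, and applies Quillen's fiber lemma to $\{\sigma\mid\tau\leq\phi(\sigma)\}$. That is, it attaches to $\tau$ the special subgroups \emph{containing} $\Stab(\tau)$, rather than the generators contained in it. This needs three ingredients absent from your plan.
\begin{itemize}
\item[(i)] $\Stab(\phi(\sigma))=A_\sigma$ (Lemma~\ref{lem:CD}). This is not formal, since $\Stab(\tau\vee\tau')$ can be strictly larger than $\langle\Stab(\tau),\Stab(\tau')\rangle$. It is proved by ordering the cuts $L(e)$, using that base points are leaves.
\item[(ii)] The fibers are closed under intersection. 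This follows from $A_{\sigma_1}\cap A_{\sigma_2}=A_{\sigma_1\cap\sigma_2}$ and Remark~\ref{rem:stab}.
\item[(iii)] The fibers are nonempty. Take, for each $\Stab(\tau_{a_i})$, the smallest clique whose special subgroup contains it. Servatius' description of centralizers (Lemma~\ref{lem:commuting}) then shows the union of these cliques is again a clique.
\end{itemize}
Your fallback via a dual cover by maximal cliques would need exactly (i)--(iii) as well.
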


Wiedmer has shown, cf.  \cite[Theorem 3.1]{Wied}, that given any graph $\Theta$ there is another graph $\G$ such that  $\POut(A_{\G})\cong A_\Theta$. This together with Theorem \ref{teo:homotopWhi-RAAG} imply that $|\Wh_\G^0|$ can have any  homotopy/homology type.

{\bf Structure of the paper.} In Section \ref{sec:Sigma} we introduce Sigma invariants and recall some of their properties. Section \ref{sec:Mainresult} is where Theorem \ref{teo:A} is proven, we also give in Lemma \ref{lem:densitycond} a list of conditions which are useful to check the hypotheses of Theorem \ref{teo:A}. The rest of the paper is devoted to applications. In Section \ref{sec:Fouxe} we consider Fouxé-Rabinovitch groups and show item (i) of Theorem \ref{teo:families} and in Sections \ref{sec:MM} and \ref{sec:supp} we deal with groups of pure symmetric automorphisms of right-angled Artin groups: in Section \ref{sec:MM} we recall the construction of the $\G$-McCullough Miller complex and show that Theorem \ref{teo:A} applies, which yields item (ii) of Theorem \ref{teo:families}, and in Section \ref{sec:supp} we show Theorems \ref{teo:POutRAAGs} and \ref{teo:homotopWhi-RAAG}. Finally, in Section \ref{sec:simplecomplexes} we observe that under mild conditions, the natural action of fundamental groups of certain simple complexes of groups satisfies Hypotesis (*)  and show Theorems \ref{theorem:SigmaArtin} (for Artin groups satisfying the $K(\pi,1)$-conjecture) and  \ref{theorem:SigmaGraphProd} (for graph products), these results prove items (iii), (iv) and (v) of Theorem \ref{teo:families}.

\section{Sigma invariants and some previous results}\label{sec:Sigma}
Let $G$ be a finitely generated group. Define the \emph{character sphere} of $G$ as
$$
S(G)=(\mathrm{Hom}(G,\mathbb{R})\setminus\{0\})/\sim
$$
where two non-zero characters $\chi_1,\chi_2:G\to\mathbb{R}$ are equivalent if $\chi_1=t\chi_2$ for some $t>0$. If $n$ denotes the torsion-free rank of the abelianization of $G$, then $S(G)\cong\mathbb{S}^{n-1}$ (c.f. \cite[Lemma A1.1]{Strebel}). A character $\chi:G\to \mathbb{Z}$ is called \emph{discrete}, and the set of all discrete characters is dense in $S(G)$ (c.f. \cite[Lemma B3.24]{Strebel}). We define the homological and homotopical Sigma invariants of a group as follows:

\begin{defi}
    Let $G$ be a finitely generated group and $\chi:G\to\mathbb{R}$ a character. If $A$ is a left $G$-module and $n\geq 1$, the \emph{Bieri–Neumann–Strebel–Renz (BNSR) invariants} are defined by
    $$
    \Sigma^n(G,A)=\{[\chi]\in S(G)\mid A \text{ is of type }\mathrm{FP}_n \text{ over }\mathbb{Z}G_\chi\}
    $$
 where $G_\chi=\{g\in G\mid\chi(g)\geq 0\}$.   Observe that
    $$
    \Sigma^\infty(G,A)\subseteq \dots \subseteq \Sigma^n(G,A)\subseteq \dots \subseteq \Sigma^1(G,A)\subseteq \Sigma^0(G,A)=S(G).
    $$
\end{defi}

\begin{defi}
    Let $G$ be a group admitting a finite $K(G,1)$, and let $X$ be a finite model for $K(G,1)$ having a single 0-cell. There is a bijection between the vertices of the universal cover $\widetilde{X}$ and the elements of $G$. Fix a base point $b\in\widetilde{X}^{(0)}$. Given a character $\chi:G\to\mathbb{R}$, define a function $\widetilde{\chi}:\widetilde{X}^{(0)}\to\mathbb{R}$ by $\widetilde{\chi}(g\cdot b)=\chi(g)$. For $a\in\mathbb{R}$, let $\widetilde{X_{\chi}}^{[a,\infty)}$ be the maximal subcomplex of $\widetilde{X}$ with $0$-skeleton $\{x\in\widetilde{X}^{(0)}\mid \widetilde{\chi}(x)\geq a\}$. Then, for each $d\geq 0$, the inclusion $\widetilde{X_{\chi}}^{[0,\infty)}\hookrightarrow \widetilde{X_{\chi}}^{[-d,\infty)}$ induces a map $\tau_i^d:\pi_i\bigl(\widetilde{X_{\chi}}^{[0,\infty)}\bigr)\longrightarrow \pi_i\bigl(\widetilde{X_{\chi}}^{[-d,\infty)}\bigr).$
    Define the \emph{homotopical Sigma invariants} by
    $$\Sigma^n(G)=\Bigl\{[\chi]\in S(G)\ \Bigm|\ \exists\, d>0 \text{ such that }\tau_i^d\text{ is trivial for all } i<n \Bigr\}.$$
    This yields a descending filtration:
    $$\Sigma^\infty(G)\subseteq \dots \subseteq \Sigma^1(G)\subseteq \Sigma^0(G)=S(G). $$
\end{defi}
Renz proved in his thesis (cf. \cite{Renz}) that the homotopical Sigma invariants are well-defined, i.e. they do not depend on the chosen model for $K(G,1)$. Bieri and Renz also showed that both $\Sigma^n(G,\mathbb{Z})$ and $\Sigma^n(G)$ are open in $S(G)$, and that if $n=\mathrm{cd}(G)$ is the cohomological dimension of $G$, then $\Sigma^m(G,\mathbb{Z})=\Sigma^n(G,\mathbb{Z})$ for every $m\geq n$. Similarly, if $n=\mathrm{gd}(G)$ is the geometric dimension of $G$, then $\Sigma^m(G)=\Sigma^n(G)$ for every $m\geq n$ (c.f.\ \cite{Bieri-Renz}).

Another important remark is that the homotopical and homological Sigma invariants are closely related. Indeed, $\Sigma^1(G)=\Sigma^1(G,\mathbb{Z})$ and $\Sigma^n(G)=\Sigma^n(G,\mathbb{Z})\cap\Sigma^2(G)$ for every $n\geq 1$ (c.f.\ \cite{Renz}). The main result about Sigma invariants, which explains why they encode the finiteness properties of the coabelian subgroups, is the following:
\begin{teo}[Bieri-Renz \cite{Bieri-Renz}]\label{teo:FinitenesSigma}
	Let $G$ be a group of type $\mathrm{F}_n$ and $G'\leq N\leq G$. Then:
	\begin{enumerate}
		\item[(i)] $N$ is of type $\mathrm{FP}_n$ if and only if $\{[\chi]\in S(G)\mid \chi(N)=0\}\subseteq \Sigma^n(G,\mathbb{Z})$.
		\item[(ii)] $N$ is of type $\mathrm{F}_n$ if and only if
		$\{[\chi]\in S(G)\mid \chi(N)=0\}\subseteq \Sigma^n(G)$.
	\end{enumerate}
\end{teo}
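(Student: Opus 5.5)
The statement is the classical Bieri--Renz criterion. It is quoted from \cite{Bieri-Renz} rather than proved here, but I would prove it as follows. Fix $G$ of type $\mathrm{F}_n$ and $G'\le N\le G$. Then $Q=G/N$ is finitely generated abelian, and the characters vanishing on $N$ are exactly those factoring through $Q$; write $S(G,N)$ for this subsphere. The plan has three parts: reduce to the case where $Q$ is free abelian of finite rank, prove the rank one case by a direct argument, and pass to higher rank by induction using compactness of $S(G,N)$ and openness of $\Sigma^n$.

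For the reduction, note that $N$ has finite index in the preimage $\widetilde N$ of the torsion of $Q$. Finiteness properties pass between finite-index subgroups, and $S(G,N)=S(G,\widetilde N)$, so we may assume $Q\cong\mathbb{Z}^r$. If $r=0$ there is nothing to prove.

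For rank one, $Q\cong\mathbb{Z}$ and $S(G,N)=\{[\chi],[-\chi]\}$ with $\chi:G\to\mathbb{Z}$ surjective. The fact to establish is that $N=\ker\chi$ is of type $\mathrm{FP}_n$ (resp. $\mathrm{F}_n$) if and only if both $[\chi]$ and $[-\chi]$ lie in $\Sigma^n(G,\mathbb{Z})$ (resp. $\Sigma^n(G)$). In the homotopical setting I would take a finite $K(G,1)$ with universal cover $\widetilde X$, which is a free cocompact $N$-complex, together with the height function $\widetilde\chi$. If both $\tau^d$ maps for $\chi$ and $-\chi$ are trivial up to dimension $n-1$, then a Morse-theoretic argument shows that the slab $\widetilde\chi^{-1}[-d,d]$ can be made $(n-1)$-connected after enlarging $d$. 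Since $N$ acts cocompactly on this slab, $N$ is of type $\mathrm{F}_n$. Conversely, if $N$ is of type $\mathrm{F}_n$, then a cocompact $(n-1)$-connected $N$-complex gives the uniform pushing into half-spaces. The homological version is the same argument done with chain complexes: a free resolution of finite type over $\mathbb{Z}G$, and the Laurent ring $\mathbb{Z}N[t^{\pm1}]\cong\mathbb{Z}G$, with $\mathbb{Z}G_\chi$ corresponding to $\mathbb{Z}N[t]$. Then $\mathbb{Z}$ is $\mathrm{FP}_n$ over both $\mathbb{Z}N[t]$ and $\mathbb{Z}N[t^{-1}]$ if and only if it is $\mathrm{FP}_n$ over $\mathbb{Z}N$; the mechanism is a Mayer--Vietoris type sequence, analogous to Brown's criterion.

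For higher rank I would induct on $r$, choosing a surjection $Q\to\mathbb{Z}$ with kernel $\mathbb{Z}^{r-1}$. For the forward direction, assume $N$ is of type $\mathrm{FP}_n$ and let $[\chi]\in S(G,N)$. When $\chi$ is discrete, the subgroup $\ker\chi\supseteq N$ satisfies $\ker\chi/N\cong\mathbb{Z}^{r-1}$, and an extension of $\mathbb{Z}^{r-1}$ by an $\mathrm{FP}_n$ group is again $\mathrm{FP}_n$. The rank one case then gives $[\chi]\in\Sigma^n$. A non-discrete $\chi$ is handled by an approximation argument, which is the delicate point. The converse, showing that $S(G,N)\subseteq\Sigma^n$ implies $N$ is $\mathrm{FP}_n$, is the main obstacle. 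My first attempt there would be to fix a surjection $\chi:G\to\mathbb{Z}$ vanishing on $N$ and set $M=\ker\chi$. By rank one, $M$ is of type $\mathrm{FP}_n$. To apply the induction hypothesis to $M\supseteq N$, I would need $S(M,N)\subseteq\Sigma^n(M)$. For this I would use the restriction result that if $[\psi]\in\Sigma^n(G)$ and $\psi|_M\neq0$, with suitable control on the great subsphere, then $[\psi|_M]\in\Sigma^n(M)$. This holds because $S(G,N)$ lies entirely inside $\Sigma^n$, so neighbourhoods of the relevant great circle through $[\pm\chi]$ are contained in $\Sigma^n$. Making this restriction step precise is the hardest part, and it is where openness of $\Sigma^n$ enters essentially.
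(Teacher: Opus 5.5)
The paper does not prove this statement; it quotes it from Bieri--Renz. Judged on its own, your reduction to $Q\cong\mathbb{Z}^r$ and your outline of the rank one case are the standard ones, apart from the slip that $\widetilde X$ is $G$-cocompact rather than $N$-cocompact. The passage to higher rank, however, has two genuine gaps, and they sit exactly at the steps you call ``delicate'' and ``hardest''.

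In the forward direction, no approximation argument can take you from discrete to non-discrete characters, because openness of $\Sigma^n$ points the wrong way. The complement of $\Sigma^n(G,\mathbb{Z})$ is closed, and a closed subset of $S(G,N)$ may consist of a single non-discrete point. So knowing that all discrete points of $S(G,N)$ lie in $\Sigma^n(G,\mathbb{Z})$ proves nothing about the others. For that you would need a push constant, or a chain endomorphism of positive valuation, that is uniform near each point, and you do not produce one. Argue instead for every $[\chi]\in S(G,N)$ at once:
\begin{itemize}
\item Since $\chi(N)=0$, the monoid $G_\chi$ is a union of cosets of $N$. Hence $\mathbb{Z}G_\chi$ is free over $\mathbb{Z}N$, and $\mathbb{Z}G_\chi\otimes_{\mathbb{Z}N}\mathbb{Z}\cong\mathbb{Z}Q_{\bar\chi}$ for the induced character $\bar\chi$ of $Q$.
\item If $N$ is of type $\mathrm{FP}_n$, this module is of type $\mathrm{FP}_n$ over $\mathbb{Z}G_\chi$.
\item Since $\Sigma^\infty(\mathbb{Z}^r,\mathbb{Z})=S(\mathbb{Z}^r)$, the trivial module is of type $\mathrm{FP}_\infty$ over $\mathbb{Z}Q_{\bar\chi}$.
\item Hence $\mathbb{Z}$ is of type $\mathrm{FP}_n$ over $\mathbb{Z}G_\chi$.
\end{itemize}
The homotopical version can be handled similarly, e.g.\ with a $K(G,1)$ fibred over the torus $T^r$ with fibre a $K(N,1)$ having finite $n$-skeleton.

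More seriously, the converse rests entirely on the claim $[\psi|_M]\in\Sigma^n(M)$. You support it only by noting that a neighbourhood of the relevant arc lies in $\Sigma^n(G)$, which is not an argument. Membership of $[\psi|_M]$ in $\Sigma^n(M)$ is about filling spheres in the region $\{\psi\ge 0\}$ of an $M$-cocompact slab $\{|\chi|\le d\}$ while staying in a somewhat larger slab and keeping $\psi$ bounded below. Each single fact $[\psi+t\chi]\in\Sigma^n(G)$ only yields fillings in a half-space $\{\psi+t\chi\ge c\}$, on which $\chi$ is unbounded. To get simultaneous control you need two things:
\begin{itemize}
\item Use compactness of the closed arc from $[\chi]$ through $[\psi]$ to $[-\chi]$, together with openness, to obtain one uniform push constant (or finitely many chain endomorphisms $\varphi_1,\dots,\varphi_k$ with positive valuation on an open cover of the arc).
\item Push every piece of a filling that strays far from the half-strip back towards it, using near each outward direction a character from the arc that increases inwards.
\end{itemize}
That uniform radial pushing is the real content of the Bieri--Renz theorem. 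Once you have it, the induction on $r$ is superfluous: run the argument on all of $S(G,N)$ inside $Q\otimes\mathbb{R}\cong\mathbb{R}^r$. This shows that the $N$-cocompact subcomplex lying over a large ball is essentially $(n-1)$-connected (resp.\ acyclic) in the subcomplex over a larger ball, and Brown's criterion finishes.

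Note also that in the homological case your induction applies the statement to $M$, which is only known to be of type $\mathrm{FP}_n$, not $\mathrm{F}_n$. So the inductive statement must be formulated for groups of type $\mathrm{FP}_n$, as Bieri--Renz do.
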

Now we introduce two results that will be relevant later.
\begin{lem}\cite[Lemma 2.1]{MeierMeinert}\label{lem:SigmaCentre}
	Let $G$ be a group of type $\mathrm{F}_n$ and let $\chi:G\to\mathbb{R}$ be a character with $\chi(\mathrm{Z}(G))\neq 0$ where $Z(G)$ is the center of $G$. Then $[\chi]\in \Sigma^n(G)\subseteq \Sigma^n(G,\mathbb{Z})$.
\end{lem}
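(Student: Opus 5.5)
The plan is to produce directly a finite $K(G,1)$ with a convenient cellular structure, and then check the defining condition of $\Sigma^n(G)$ by a direct ``pushing'' argument. The inclusion $\Sigma^n(G)\subseteq\Sigma^n(G,\mathbb{Z})$ is the standard one, obtained by applying the Hurewicz theorem to the filtration by the sets $\widetilde{X_\chi}^{[a,\infty)}$, or equivalently from $\Sigma^n(G)=\Sigma^n(G,\mathbb{Z})\cap\Sigma^2(G)$. So the content is to show $[\chi]\in\Sigma^n(G)$.

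Fix a central element $z\in Z(G)$ with $\chi(z)\neq 0$. Since $\chi$ and $-\chi$ play symmetric roles for this argument, we may assume $\chi(z)>0$. Take any $K(G,1)$ with finite $n$-skeleton and a single vertex, and let $\widetilde X$ be its universal cover, on which $G$ acts freely and cocompactly on the $n$-skeleton. We aim to show that for some $d\geq 0$, every map $S^i\to\widetilde{X_\chi}^{[0,\infty)}$ with $i<n$ is null-homotopic in $\widetilde{X_\chi}^{[-d,\infty)}$. This is the same as the maps $\tau^d_i$ being trivial.

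\textbf{Step 1: build a homotopy from the identity to translation by $z$.} Since $z$ is central, left translation $g\mapsto zg$ on vertices commutes with the $G$-action. Because $z$ is central, the cellular map $\ell_z:\widetilde X\to\widetilde X$, $x\mapsto zx$, is $G$-equivariant and $G$-homotopic to the identity. Concretely, we choose a path from $b$ to $zb$ and extend equivariantly over the finitely many cell orbits of the $n$-skeleton, using contractibility of $\widetilde X$. Cocompactness lets us take this $G$-homotopy $H:\widetilde X^{(n)}\times[0,1]\to\widetilde X$ so that each track lowers $\widetilde\chi$ by at most a fixed constant $d$. Moreover, since $\widetilde\chi(zx)=\widetilde\chi(x)+\chi(z)$, the map $\ell_z$ raises heights by $\chi(z)>0$.

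\textbf{Step 2: push spheres off to infinity.} Given $f:S^i\to\widetilde{X_\chi}^{[0,\infty)}$, we concatenate the homotopies $H\circ(\ell_{z^k}f)$ for $k=0,1,2,\dots$. The $k$-th homotopy lives in heights at least $k\chi(z)-d\geq -d$. Since $f(S^i)$ is compact and $\widetilde X$ is contractible, $f$ is null-homotopic in some finite subcomplex, which sits at heights at least some $-c$. After translating by $z^k$ for $k$ large enough that $k\chi(z)\geq c$, the translated null-homotopy lies in $\widetilde{X_\chi}^{[0,\infty)}$. Composing gives a null-homotopy of $f$ inside $\widetilde{X_\chi}^{[-d,\infty)}$, with $d$ independent of $f$.

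The main obstacle I expect is the uniformity in Step 1: making sure the equivariant homotopy has bounded height loss. This is exactly where cocompactness, i.e.\ type $\mathrm{F}_n$, is used, since finitely many orbit representatives of cells give a global bound. Finally, we adjust to the precise definition, which requires working with the $n$-skeleton and homotopy groups below $n$.
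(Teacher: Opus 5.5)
The paper does not prove this lemma; it only cites Meier--Meinert. Your argument is correct, and it is the standard one, written out in full.

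Left translation $\ell_z$ by a central $z$ with $\chi(z)>0$ has three properties:
\begin{itemize}
\item it is a cellular $G$-map;
\item it is $G$-homotopic to the identity;
\item it raises the height of every cell by exactly $\chi(z)$.
\end{itemize}
Your Steps 1--2 are the easy (sufficiency) half of Renz's push criterion for $\Sigma^n(G)$, specialized to $\varphi=\ell_z$. That criterion asks for a cellular $G$-map of the $n$-skeleton, $G$-homotopic to the inclusion, with positive guaranteed height shift. Quoting it would shorten the proof to two lines.

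Your direct version works for the right reasons. The loss $d$ is fixed by finitely many cell-orbit representatives, and only the number of iterations depends on the sphere $f$. Centrality is used exactly where it must be: for the equivariance of $\ell_z$, and for the consistency $H(gb,1)=gzb=zgb$ of the equivariant extension. You should first make $f$ cellular inside $\widetilde{X_\chi}^{[0,\infty)}$, so that $H$, which is defined only on the $n$-skeleton, applies to it.

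There are two small corrections.

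\textbf{The reduction to $\chi(z)>0$.} Make it by replacing $z$ with $z^{-1}$, which is also central. Do not phrase it as a symmetry between $\chi$ and $-\chi$: in general $\Sigma^n(G)\neq-\Sigma^n(G)$, so proving $[-\chi]\in\Sigma^n(G)$ says nothing about $[\chi]$.

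\textbf{The inclusion $\Sigma^n(G)\subseteq\Sigma^n(G,\mathbb{Z})$.} This is not a direct application of Hurewicz. The subcomplexes $\widetilde{X_\chi}^{[a,\infty)}$ need not be simply connected, so vanishing of the maps on $\pi_i$ does not immediately give vanishing on $H_i$; that needs a separate argument. Simply cite the equality $\Sigma^n(G)=\Sigma^n(G,\mathbb{Z})\cap\Sigma^2(G)$ recalled in Section \ref{sec:Sigma}.

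Better still, run your idea at chain level. Take a free resolution of $\mathbb{Z}$ over $\mathbb{Z}G$ that is finitely generated up to degree $n$. Multiplication by $z$ is a $\mathbb{Z}G$-chain map lifting $\mathrm{id}_{\mathbb{Z}}$, and it raises the $\chi$-valuation by $\chi(z)$. The Bieri--Renz valuation criterion then gives $[\chi]\in\Sigma^n(G,\mathbb{Z})$ directly, for $G$ merely of type $\mathrm{FP}_n$. This is the generality actually needed when Lemma \ref{lem:densitycond}(ii) is applied to cell stabilizers, since Hypothesis (*) only assumes those to be of type $\mathrm{FP}_\infty$.
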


\begin{rem}\label{rem:centerdense} As a consequence, if $G$ is a group of type $F_\infty$ such that the image of $Z(G)$ under the abelianization map $G\to G_{\mathrm{ab}}=G/G'$ is infinite, then $\Sigma^\infty(G)$ is dense in $S(G)$ because if we set
\[\mathrm{Ann}(Z(G))=\{[\chi]\in S(G)\mid\chi(Z(G))=0\},\]
then $\mathrm{Ann}(Z(G))$ is the intersection with $S(G)$ of a proper linear subspace of $\mathrm{Hom}(G,\mathbb{R})$ so $S(G)-\mathrm{Ann}(Z(G))$ is dense in $S(G)$ and Lemma \ref{lem:SigmaCentre} implies
\[S(G)-\mathrm{Ann}(Z(G))\subseteq\Sigma^\infty(G).\]    
\end{rem}

The following Theorem is due to Schmitt (homological version) and Meinert (homotopical version).

\begin{teo}\cite[Theorem 3.2]{MMVW}\label{teo:MMVW}
Let $X$ be a $G$–CW–complex with cocompact $n$-skeleton, and let $\chi:G\to\mathbb{R}$ be a character such that for every $p$-cell $\sigma$ of $X$ with $p\leq n$, we have $\chi(G_\sigma)\neq 0$, where $G_\sigma$ denotes the stabilizer of $\sigma$. Then:
\begin{itemize}
\item[(i)] If $X$ is $(n-1)$-connected and $[\chi|_{G_\sigma}]\in\Sigma^{\,n-p}(G_\sigma)$ for each $p$-cell $\sigma$ with $p\leq n$, then $[\chi]\in\Sigma^n(G)$.

\item[(ii)] If $X$ is $(n-1)$-acyclic and $[\chi|_{G_\sigma}]\in\Sigma^{\,n-p}(G_\sigma,\mathbb{Z})$ for each $p$-cell $\sigma$ with $p\leq n$, then $[\chi]\in\Sigma^n(G,\mathbb{Z})$.
\end{itemize}
\end{teo}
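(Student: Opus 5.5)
The plan is to prove both parts through Novikov-type criteria. First, a spectral sequence argument handles the homological statement (ii). Then (i) follows from (ii) together with the simple-connectivity input, using $\Sigma^n(G)=\Sigma^n(G,\mathbb{Z})\cap\Sigma^2(G)$. For $n\le 1$ it is more convenient to argue directly with $\Sigma^1$ via a connected Cayley graph built from the $1$-skeleton of $X$.

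\textbf{Step 1: finiteness of $G$.} The hypothesis $[\chi|_{G_\sigma}]\in\Sigma^{n-p}(G_\sigma,\mathbb{Z})$ includes, by the usual convention, that $G_\sigma$ is of type $\mathrm{FP}_{n-p}$. Since $X$ is $(n-1)$-acyclic with cocompact $n$-skeleton, Brown's criterion then gives that $G$ is of type $\mathrm{FP}_n$, and of type $\mathrm{F}_n$ in the connected case. This puts us in the setting where the Sikorav/Bieri--Renz criterion applies: for $G$ of type $\mathrm{FP}_n$,
\[[\chi]\in\Sigma^n(G,\mathbb{Z}) \iff H_i\bigl(G;\widehat{\mathbb{Z}G}_{\chi}\bigr)=0 \text{ for all } i\le n,\]
where $\widehat{\mathbb{Z}G}_\chi$ is the Novikov completion, with the sign convention fixed to match the definition of $\Sigma^n$.

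\textbf{Step 2: the isotropy spectral sequence.} Consider the equivariant spectral sequence of the $G$-complex $X$ with coefficients $M=\widehat{\mathbb{Z}G}_\chi$:
\[E^1_{pq}=\bigoplus_{\sigma\in\Sigma_p} H_q(G_\sigma;M)\Longrightarrow H_{p+q}(G;M),\]
where $\Sigma_p$ is a set of orbit representatives of $p$-cells, which is finite for $p\le n$. Because $X$ is only $(n-1)$-acyclic, the abutment is $H_*(G;M)$ only in degrees $\le n-1$, and surjects in degree $n$. To handle this, I would compare with a contractible $G$-complex obtained by attaching free cells of dimension $\ge n+1$ to $X$. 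These new cells contribute only to $p\ge n+1$, so they do not affect total degree $\le n$. It therefore suffices to show $E^1_{pq}=0$ whenever $p+q\le n$, that is, $H_q(G_\sigma;M)=0$ for $q\le n-p$.

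\textbf{Step 3 (main obstacle): restricting the Novikov module.} The key point is to show that, as a $G_\sigma$-module, $M=\widehat{\mathbb{Z}G}_\chi$ is built from the Novikov modules $\widehat{\mathbb{Z}G_\sigma}_{\chi|}$. To see this, choose coset representatives $g_j$ for $G_\sigma\backslash G$. Then $M$ is the submodule of $\prod_j \widehat{\mathbb{Z}G_\sigma}_{\chi|}\,g_j$ consisting of families satisfying a Novikov finiteness condition across cosets: for each $t$, only finitely many cosets carry support below level $t$. Here the hypothesis $\chi(G_\sigma)\neq0$ is essential, since it makes each coset contribution genuinely a Novikov module for $G_\sigma$ rather than an uncompleted group ring.

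Now compute $H_q(G_\sigma;M)$ using a free resolution of $G_\sigma$ that is finitely generated up to degree $n-p+1$. Homology with finitely generated free modules commutes with products and with this "locally finite product" construction. The vanishing $H_q(G_\sigma;\widehat{\mathbb{Z}G_\sigma}_{\chi|})=0$ for $q\le n-p$, given by Sikorav applied to $G_\sigma$, should then pass to $M$. What I would do first is write an explicit contracting homotopy of the truncated complex $F_*\otimes\widehat{\mathbb{Z}G_\sigma}$ with bounded valuation shift, which is possible since the relevant chain groups are finitely generated. I would then apply it coset-wise: the bounded shift ensures the result still satisfies the finiteness condition, and hence lies in $M$. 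This bookkeeping is where I expect the real work to be.

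\textbf{Step 4: conclusion.} Steps 2 and 3 give $H_i(G;M)=0$ for $i\le n$, so $[\chi]\in\Sigma^n(G,\mathbb{Z})$ by the criterion in Step 1, which proves (ii). For (i), run the same argument, or use Meinert's homotopical version of the Novikov criterion, to get the homological part. Then show $[\chi]\in\Sigma^2(G)$ using $X$ simply connected and $[\chi|_{G_\sigma}]\in\Sigma^{2-p}(G_\sigma)$, via the analogous $\Sigma^2$ argument: assemble a $\chi$-monotone presentation complex from the stabilizers and the $2$-skeleton of $X$. Finally conclude by $\Sigma^n(G)=\Sigma^n(G,\mathbb{Z})\cap\Sigma^2(G)$.
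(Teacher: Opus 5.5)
The paper quotes this result from \cite{MMVW} without proof, so the question is whether your outline is a complete proof. For part (i) it is not. Reducing to $\Sigma^2$ via $\Sigma^n(G)=\Sigma^n(G,\Z)\cap\Sigma^2(G)$ is a legitimate shortcut for $n\ge 3$. After that reduction, however, the whole homotopical content sits in one sentence: ``show $[\chi]\in\Sigma^2(G)$ \dots via the analogous $\Sigma^2$ argument: assemble a $\chi$-monotone presentation complex.'' For $n=2$ that sentence is literally statement (i), so the base case has not been reduced at all.

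Nor can this step come out of your Novikov machinery, and there is no ``homotopical Novikov criterion'' to appeal to. Vanishing of $H_i(G;\widehat{\Z G})$ for $i\le 2$ does not imply $[\chi]\in\Sigma^2(G)$. For a finite flag complex $L$ that is acyclic but not simply connected, Bestvina--Brady show that the kernel of the height character $\chi\colon A_L\to\Z$ is of type $\mathrm{FP}_2$ but not finitely presented. Hence $\{\pm[\chi]\}\subseteq\Sigma^2(A_L,\Z)$ while $\{\pm[\chi]\}\not\subseteq\Sigma^2(A_L)$.

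What is missing is a genuinely geometric argument. One possible route, sketched only:
\begin{itemize}
\item Build a free $G$-complex $Y\to X$ with cocompact $2$-skeleton by Brown's construction: over the orbit of a $p$-cell $\sigma$, use the universal cover of a $K(G_\sigma,1)$ with finite $(2-p)$-skeleton.
\item Extend $\chi$ to an equivariant height function $h$ on $Y$.
\item Prove that loops in $\{h\ge t\}$ are null-homotopic in $\{h\ge t-d\}$, with $d$ independent of $t$. To do this, fill the projected loop by a cellular disc in the simply connected $X^{(2)}$. Lift its $2$-cells high into their fibres using elements of $G_f$ of positive $\chi$-value. Reconnect adjacent lifts inside edge fibres using $[\chi|_{G_e}]\in\Sigma^1(G_e)$. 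Fill the leftover loops in vertex fibres using $[\chi|_{G_v}]\in\Sigma^2(G_v)$. Cocompactness keeps all height losses uniform.
\end{itemize}
As written, your proposal proves (i) only for $n=1$, where it coincides with (ii).

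The homological part (ii) is sound, and Step 3 is much easier than you expect. The Novikov ring of $G_\sigma$ with respect to $\chi|_{G_\sigma}$ is a subring of the Novikov ring of $G$ with respect to $\chi$. Therefore $\widehat{\Z G}\otimes_{\Z G_\sigma}F\cong\widehat{\Z G}\otimes_{\widehat{\Z G_\sigma}}\bigl(\widehat{\Z G_\sigma}\otimes_{\Z G_\sigma}F\bigr)$. A complex of free modules that is exact in degrees $\le m$ splits there, so it stays exact after any base change. This removes the coset bookkeeping and the bounded-shift estimates.

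It also removes your assumption that $F_{n-p+1}$ is finitely generated, which you cannot make, since $G_\sigma$ is only of type $\mathrm{FP}_{n-p}$. For $p=n$, the vanishing of $H_0$ just uses that $1-h$ is a unit for a suitable $h\in G_\sigma$; no finite generation of $G_\sigma$ is needed.

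In Step 2, drop the attaching of free cells. When $X$ is not simply connected, non-spherical classes in $H_n(X)$ cannot be killed by $(n+1)$-cells. You do not need this step anyway: surjectivity of $H^G_i(X;M)\to H_i(G;M)$ for $i\le n$, which you already observed, suffices.

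Finally, the homological half of (i) needs no separate criterion. $X$ is $(n-1)$-acyclic by Hurewicz and $\Sigma^{n-p}(G_\sigma)\subseteq\Sigma^{n-p}(G_\sigma,\Z)$, so this half is just (ii).
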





\section{Actions with a nice strong fundamental domain and Sigma invariants}\label{sec:Mainresult}

In this section we prove Theorem \ref{teo:A}. Throughout the section, we assume that we have a group action $G\curvearrowright X$ satisfying Hypothesis (*) from the introduction, which we recall here.

\bigskip

\noindent{\bf Hypothesis (*)}: $G$ is a group acting cocompactly, admissibly, and with stabilizers of type $\mathrm{FP}_\infty$ on a contractible $CW$-complex $X$, with a sub $CW$-complex $W\subseteq X$ which is a contractible strong fundamental domain.

\bigskip

Recall that $W$ is a strong fundamental domain if any $G$-orbit of cells in $X$ has precisely one representative in $W$. Observe also that the cocompactness hypothesis implies that $W$ is finite, and together with the assumption that the cell stabilizers are of type $\mathrm{FP}_\infty$, it follows that $G$ is also of type $\mathrm{FP}_\infty$ by Brown's result mentioned in the introduction \cite[Proposition 1.1]{Brown}. 

Recall also that $W_{\Fsing}$ denotes the subcomplex of $W$ consisting of those cells $\sigma$ such that the stabilizer $G_\sigma$ is nontrivial. If $\delta\subseteq\sigma$ is a subcell, then the admissibility hypothesis implies that $G_\sigma \leq G_\delta$, so $W_{\Fsing}$ is indeed a subcomplex.

\begin{prop}\label{prop:homcond} Let $G\curvearrowright X$ be a group action as in Hypothesis (*) with strong fundamental domain $W$ and let $0\neq\chi:G\to\Z$ be a discrete character with kernel $N=\ker\chi$. Assume that for each cell $\sigma\in W_\Fsing$, $\chi(G_\sigma)\neq 0$ and that the homology groups $H_i(N\cap G_\sigma,\Z)$ are finitely generated (as abelian groups) for $i=0,\ldots,n$. If  $W_{\Fsing}$ is not $(n-1)$-acyclic, then there is some $0<i\leq n$ such that the homology group $H_i(N,\Z)$ is not finitely generated (as abelian group).
\end{prop}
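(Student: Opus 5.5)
We study the $N$-action on the contractible complex $X$ through the equivariant homology spectral sequence
\[E^1_{p,q}=\bigoplus_{\sigma\in\Sigma_p} H_q(N_\sigma,\Z_\sigma)\Longrightarrow H_{p+q}(N,\Z),\]
where $\Sigma_p$ is a set of representatives of the $N$-orbits of $p$-cells of $X$ and $N_\sigma=N\cap G_\sigma$. Admissibility makes the orientation modules trivial. The sequence converges to $H_*(N,\Z)$ because $X$ is contractible.

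\textbf{Step 1: describe the $N$-orbits.} Since $W$ is a strong fundamental domain, the $G$-orbits of cells correspond to the cells of $W$. The $N$-orbits inside the $G$-orbit of $\sigma\in W$ are indexed by $G/NG_\sigma$.
\begin{itemize}
\item If $\sigma\in W_\Fsing$, then $\chi(G_\sigma)\ne 0$, so $NG_\sigma$ has finite index in $G$ (its image in $\Z$ is a nonzero subgroup). Hence there are finitely many $N$-orbits, and $E^1_{p,q}$ restricted to these cells is finitely generated for $q\le n$ by hypothesis.
\item If $\sigma\notin W_\Fsing$, then $G_\sigma=1$ and the orbits are indexed by $G/N\cong\Z$. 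These cells contribute only to the row $q=0$, as a free $\Z[\Z]$-module.
\end{itemize}
So the rows $q\ge 1$ of $E^1$ in the range $p+q\le n+1$ are finitely generated abelian groups.

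\textbf{Step 2: identify the bottom row.} The row $q=0$ is the cellular chain complex of $X/N$, with $E^2_{p,0}=H_p(X/N)$. Let $t$ generate $G/N\cong\Z$, acting on $X/N$. Then $X/N$ is obtained from $\Z\times W$, i.e. $\Z$ copies of $W$, by identifying the copies along $W_\Fsing$. More precisely, over each cell $\sigma$ of $W_\Fsing$ the copies are identified modulo $k_\sigma=[G:NG_\sigma]$, and these identifications are compatible with faces because $G_\sigma\le G_\delta$ for $\delta\subseteq\sigma$.

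I would compare $X/N$ with the quotient $Y/\!\sim$ of $\Z\times W$ in which all copies of $W_\Fsing$ are collapsed to one. A Mayer–Vietoris (or cofiber) argument uses that $W$ is contractible. It gives, for $p\ge 2$,
\[\tilde H_p(X/N)\;\supseteq\; \bigoplus_{\Z}\tilde H_{p-1}(W_\Fsing)\]
modulo contributions from the finite part indexed by $\Z/k_\sigma$. The point is that the relative group $H_p(Z\cup \mathrm{cone}, Z)$ built from infinitely many copies $W\times\{j\}$ glued along $W_\Fsing$ produces infinitely many copies of $\tilde H_{p-1}(W_\Fsing)$. The case $p=1$ comes from $\tilde H_0$, i.e. the connected components of $W_\Fsing$. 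Alternatively one can use the long exact sequence of the pair $(X/N, \overline{W_\Fsing})$, where $\overline{W_\Fsing}$ is the image of $NW_\Fsing$, a finite complex. Its relative homology is $\bigoplus_{\Z}H_*(W,W_\Fsing)\cong\bigoplus_\Z\tilde H_{*-1}(W_\Fsing)$ by excision, since the copies of $W\setminus W_\Fsing$ are disjoint. Because $\overline{W_\Fsing}$ is finite, it follows that $H_p(X/N)$ is not finitely generated whenever $\tilde H_{p-1}(W_\Fsing)\ne 0$. Pick the least $m\le n-1$ with $\tilde H_m(W_\Fsing)\neq 0$ and set $i=m+1\le n$.

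\textbf{Step 3: pass to the abutment (main obstacle).} We must show that the infinite generation of $E^2_{i,0}$ survives to $E^\infty_{i,0}$ and hence to $H_i(N)$. The differentials leaving $E^r_{i,0}$ land in $E^r_{i-r,r-1}$ with $r\ge 2$, which are subquotients of the finitely generated groups from Step 1 (total degree $i-1\le n$). Therefore $E^\infty_{i,0}$ is the kernel of a map from $E^2_{i,0}$ to a finitely generated group. Such a kernel has finite index in a group that is not finitely generated, so it is itself not finitely generated.

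Finally, $E^\infty_{i,0}$ is a quotient of $H_i(N,\Z)$ (the edge map). A quotient of a finitely generated group is finitely generated, so $H_i(N,\Z)$ cannot be finitely generated. This proves the statement with $0<i\le n$.
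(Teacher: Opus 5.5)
Your argument is correct, and it takes a genuinely different route from the paper's.

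\textbf{The paper's route.} The paper argues by contradiction over a field. It chooses $K=\mathbb{Q}$ or $\Z_p$ with $\overline{H}_j(W_\Fsing,K)\neq 0$ and uses Shapiro's lemma to write $H_i(N,K)=H_i(G,K[t^{\pm1}])$. Finite $K$-dimension forces these modules to be torsion over the PID $K[t^{\pm1}]$, so $H_i(G,F)=0$ for $i\le n$, where $F$ is the fraction field. It then runs Brown's spectral sequence for the $G$-action on $X$ with coefficients in $F$. The same torsion argument kills the singular cells in rows $q\le n$, so everything collapses to $\mathcal{C}_\bullet(W,W_\Fsing)\otimes F$. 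This gives $H_{j+1}(G,F)\cong\overline{H}_j(W_\Fsing,K)\otimes_K F\neq 0$, a contradiction.

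\textbf{Your route.} You run the spectral sequence for $N$ itself with integer coefficients. The singular cells make the rows $1\le q\le n$ finitely generated. The bottom row is $\mathcal{C}_\bullet(X/N)$, and its homology in degree $m+1$ is not finitely generated, because $X/N$ is the finite complex $X_\Fsing/N$ with $\Z$ disjoint copies of the cells of $W-W_\Fsing$ attached. Non-finite generation then survives the finitely many differentials into finitely generated groups and reaches $H_{m+1}(N,\Z)$ through the edge surjection.

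\textbf{What each buys.} Your version is more elementary: it needs no universal coefficients, no choice of field, no Shapiro's lemma and no localization. It is direct rather than by contradiction. It also pins down the failing degree: $H_{m+1}(N,\Z)$ is not finitely generated for every $m\le n-1$ with $\overline{H}_m(W_\Fsing)\neq 0$. It uses finite generation of $H_q(N\cap G_\sigma,\Z)$ only for $1\le q\le m$. The paper's statement and proof only conclude failure somewhere in degrees $\le n$. In exchange, the paper gets an exact collapse: over $F$ the computation reduces entirely to the finite pair $(W,W_\Fsing)$, with no finitely generated error terms to track.

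\textbf{Small repairs.}
\begin{itemize}
\item The subcomplex in your long exact sequence must be the image of $GW_\Fsing=X_\Fsing$ in $X/N$, not of $NW_\Fsing$. The latter is a single copy of $W_\Fsing$, and it would leave the finitely many translates $t^k\bar\sigma$ of singular cells in the relative complex, which spoils the clean excision identification.
\item ``Finite index'' should read ``with finitely generated quotient''.
\item $E^\infty_{i,0}$ is an iterated kernel rather than a single one, and each step preserves non-finite generation.
\end{itemize}
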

\begin{proof} Assume, for a contradiction, that $H_i(N,\Z)$ is finitely generated as abelian group for each $0\leq i\leq n$.
As $W_{\Fsing}$ is not $(n-1)$-acyclic, there is some $0\leq j\leq n-1$ such that $\overline{H}_j(W_{\Fsing})\neq 0$ where $\overline{H}$ denotes reduced homology. As $W$ is a finite complex,  $\overline{H}_j(W_{\Fsing})\neq 0$ is finitely generated so we may choose $K$ either the rational field or a prime field of the form $\Z_p$ such that $\overline{H}_j(W_{\Fsing})\otimes_\Z K\neq 0$. The universal coefficients theorem implies that then
\begin{equation}\label{eq:Wsing}
    \overline{H}_j(W_{\Fsing},K)\neq 0.
\end{equation}
Using the universal coefficients theorem again
$$H_i(N,K)=H_i(N,\Z)\otimes_\Z K\oplus\mathrm{Tor}_1^\Z(H_{i-1}(N,\Z),K)$$
and therefore $H_i(N,K)$ has finite $K$-dimension for each $0\leq i\leq n$.

 Now, let $R=K[G/N]$ be the group ring with coefficients in $K$ of the cyclic group $G/N$. As $G$-module, $R$ is the induced module $R=K\uparrow_N^G$. On the other hand, if we denote by $t$ a generator of $G/N$, then $R=K[t^{\pm 1}]$ is a principal domain. By Shapiro's lemma
 $$H_i(N,K)=H_i(G,K\uparrow_N^G)=H_i(G,R)$$ 
 and as $H_i(G,R)$ is a finitely generated $R$-module, it decomposes as the direct sum of an $R$-free part and an $R$-torsion submodule, but being of finite $K$-dimension, the $R$-free part must be trivial. Let $F$ be the field of fractions of $R$. Applying the functor $-\otimes_RF$ kills the $R$-torsion part, in other words, for $0\leq i\leq n$ we have
 $$0=H_i(G,R)\otimes_RF=H_i(G,F)$$
 where the last equality follows from the fact that $F$ is flat as $R$-module and $H_i(G,R)$ is the homology of the complex $P_\bullet\otimes_GR$ where $P_\bullet$ is a $\Z G$-projective resolution of the trivial module $\Z$.

 By \cite[Chapter VII (7.10)]{BrownBook}, the homology groups $H_\bullet(G,F)$ can be computed using the complex $X$ and a
 spectral sequence 
 $$E^1_{p,q}\Rightarrow H_{p+q}(G,F)$$
  having first page 
  $$E^1_{p,q}=\bigoplus_{\sigma \text{$p$-cell in }W}H_{q}(G_\sigma,F).$$
  Here we are implicitly using the fact that the action of $G$ on $X$ is admissible so cell stabilizers $G_\sigma$ fix the cells $\sigma$ pointwise and in particular preserve the orientation (in Brown's formulation one has to twist the coefficients module by the orientation character of $G_\sigma$ acting on $\sigma$). The differential  $d^1:E^1_{p,q}\to E^1_{p-1,q}$ comes from the boundary map in $X$.
 
 Let $\sigma\in W_\Fsing$ be a $p$-cell. Our hypothesis imply that  $\chi(G_\sigma)\neq 0$ so $G_\sigma\not\leq N$ which taking into account that $G/N$ is cyclic implies that the index $|G:NG_\sigma|$ is finite. Therefore
 the hypothesis also imply that for $0\leq q\leq n$  
 $$H_q(G_\sigma,R)=H_q(G_\sigma,K\uparrow_N^G\downarrow^G_{G_\sigma})=\bigoplus_{|G:NG_\sigma|}H_q(G_\sigma,K\uparrow_{N\cap G_\sigma}^{G_\sigma})=\bigoplus_{|G:NG_\sigma|}H_q(N\cap G_\sigma,K)$$
  has finite $K$-dimension. So the same argument as before yields 
 $$H_{q}(G_\sigma,F)=0.$$
 If $\sigma$ is a $p$-cell such that $G_\sigma=1$, then 
  $$H_{q}(G_\sigma,F)=\Bigg\{
  \begin{aligned}
 &F \text{ if }q=0\\
  &0\text{ otherwise}.\\
 \end{aligned} $$
 Therefore, $E^1_{p,q}=0$ if $q>0$ and the first page of the spectral sequence is a single row with
 $$E^1_{p,0}=\bigoplus_{\sigma \text{$p$-cell in $W$ with $G_\sigma=1$}}F$$
 and differential coming from the differential in $X$. This is precisely the $p$-th term $\mathcal{C}_p(W,W_{\Fsing})$ of the chain complex of the pair $(W,W_{\Fsing})$ after $\Z$-tensoring with the field $F$. Therefore
 $$H_p(W,W_{\Fsing},K)\otimes_K F=H_p(\mathcal{C}_\bullet(W,W_{\Fsing})\otimes_K F)=H_p(G,F)$$
where $H_p(W, W_{\Fsing}, K)$ denotes ordinary relative homology with coefficients in $K$, and since $F$ is $K$-flat, tensoring with $F$ preserves homology.  Now, using the long exact reduced homology sequence associated  to the pair $(W,W_{\Fsing})$ with coefficients in $K$ we get
 $$\ldots\to\overline{H}_{p}(W,K)\to H_p(W,W_{\Fsing},K)\to \overline{H}_{p-1}(W_{\Fsing},K)\to\overline{H}_{p-1}(W,K)\to\ldots$$
 and since we are assuming that $W$ is contractible we get for $p\geq 1$
 $$ \overline{H}_{p-1}(W_{\Fsing},K)\otimes_K F=H_p(W,W_{\Fsing},K)\otimes_K F=H_p(G,F).$$
 Finally, let $0\leq j\leq n-1$ be the  index of (\ref{eq:Wsing}). As $0\neq\overline{H}_j(W_{\Fsing},K)$ is a $K$-vector space and $K\subseteq F$ a field extension we have
 $$0\neq\overline{H}_j(W_{\Fsing},K)\otimes_KF=H_{j+1}(G,F)$$
 which is a contradiction.
\end{proof}
    
   \begin{teo}\label{teo:charsigma}
Let $G$, $X$, $W$ be as in Hypothesis (*) and let $0\neq \chi:G\to\Z$ be a discrete character. Assume that for each cell $\sigma \in W_{\Fsing}$, $\pm[\chi] \in \Sigma^n(G_\sigma,\Z)$. Then $\pm[\chi]\in\Sigma^n(G,\Z)$ if and only if $W_{\Fsing}$ is $(n-1)$-acyclic.
\end{teo}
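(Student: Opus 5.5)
Both implications reduce to earlier results: the "if" direction to Theorem \ref{teo:MMVW}(ii) applied to a suitable $G$-complex, and the "only if" direction to Proposition \ref{prop:homcond} combined with the Bieri--Renz criterion, Theorem \ref{teo:FinitenesSigma}(i).

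\emph{If.} Assume $W_{\Fsing}$ is $(n-1)$-acyclic. Theorem \ref{teo:MMVW} cannot be applied to $X$ directly, since cells with trivial stabilizer have $\chi(G_\sigma)=0$. Instead let $X_{\Fsing}=G\cdot W_{\Fsing}\subseteq X$, the subcomplex of cells with nontrivial stabilizer. This is a cocompact $G$-CW-complex, every stabilizer satisfies $\chi(G_\sigma)\neq 0$, and $[\chi|_{G_\sigma}]\in\Sigma^n(G_\sigma,\Z)\subseteq\Sigma^{n-p}(G_\sigma,\Z)$ by hypothesis.

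The key step, which I expect to be the main obstacle, is to show that $X_{\Fsing}$ is $(n-1)$-acyclic. My plan is to compare the pairs $(X,X_{\Fsing})$ and $(W,W_{\Fsing})$. The relative cells of $(X,X_{\Fsing})$ are free, so
\[C_\bullet(X,X_{\Fsing})\cong \Z G\otimes_\Z C_\bullet(W,W_{\Fsing}).\]
Since $W$ is contractible, $H_p(W,W_{\Fsing})\cong\overline H_{p-1}(W_{\Fsing})$, which vanishes for $p\le n$. Both complexes are free, so over $\Z$ the chain complex $C_\bullet(W,W_{\Fsing})$ is chain-homotopy equivalent to one concentrated in degrees $\ge n+1$. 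Hence $H_p(X,X_{\Fsing})=0$ for $p\le n$. The long exact sequence of the pair, together with the contractibility of $X$, then gives $\overline H_{p}(X_{\Fsing})=0$ for $p\le n-1$. Theorem \ref{teo:MMVW}(ii) now yields $[\chi]\in\Sigma^n(G,\Z)$, and the same argument applied to $-\chi$ gives $-[\chi]\in\Sigma^n(G,\Z)$.

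\emph{Only if.} Suppose $\pm[\chi]\in\Sigma^n(G,\Z)$ and let $N=\ker\chi$. Since $\chi$ is discrete, $\{[\psi]\in S(G)\mid\psi(N)=0\}=\{\pm[\chi]\}$. The group $G$ is of type $\FP_\infty$, so Theorem \ref{teo:FinitenesSigma}(i) applies in its homological form (Bieri--Renz holds for $\FP_n$ groups). It shows that $N$ is of type $\FP_n$, hence $H_i(N,\Z)$ is finitely generated for $i\le n$.

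For a cell $\sigma\in W_{\Fsing}$, the hypothesis $\pm[\chi|_{G_\sigma}]\in\Sigma^n(G_\sigma,\Z)$ implicitly requires $\chi|_{G_\sigma}\neq0$. Then $N\cap G_\sigma=\ker\chi|_{G_\sigma}$, and Theorem \ref{teo:FinitenesSigma}(i) applied to $G_\sigma$ (which is of type $\FP_\infty$) shows that $N\cap G_\sigma$ is of type $\FP_n$. Thus $H_i(N\cap G_\sigma,\Z)$ is finitely generated for $i\le n$. If $W_{\Fsing}$ were not $(n-1)$-acyclic, Proposition \ref{prop:homcond} would produce some $i\le n$ with $H_i(N,\Z)$ not finitely generated, a contradiction. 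Hence $W_{\Fsing}$ is $(n-1)$-acyclic.
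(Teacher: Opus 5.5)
Your proof is correct and follows the paper's argument essentially step by step. For the \emph{only if} direction you combine Proposition \ref{prop:homcond} with the Bieri--Renz criterion, and for the \emph{if} direction you show that $X_{\Fsing}$ is $(n-1)$-acyclic via $C_\bullet(X,X_{\Fsing})\cong\Z G\otimes_\Z C_\bullet(W,W_{\Fsing})$ and then apply Theorem \ref{teo:MMVW}(ii) to $X_{\Fsing}$.

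You also make explicit two points the paper leaves implicit. First, the homological Bieri--Renz criterion only needs $G$ of type $\FP_n$. Second, the hypothesis forces $\chi|_{G_\sigma}\neq 0$, which Proposition \ref{prop:homcond} requires. Your chain-homotopy splitting step works, but the simpler observation that $\Z G$ is $\Z$-flat, so that $H_i(\Z G\otimes_\Z C_\bullet)\cong\Z G\otimes_\Z H_i(C_\bullet)$, already suffices.
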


\begin{proof}

Assume that $W_{\Fsing}$ is not $(n-1)$-acyclic. 
By hypothesis, for each cell $\sigma\in W_\Fsing$, we have $\pm[\chi] \in \Sigma^n(G_\sigma, \Z)$, which implies that $\ker\chi \cap G_\sigma$ is of type $\FP_n$. Therefore, the homology groups $H_i(\ker\chi \cap G_\sigma, \Z)$ 
are finitely generated for $i = 0, \ldots, n$. 

Applying Proposition \ref{prop:homcond}, we deduce that there exists some $0 \le j \le n$ such that $H_j(\ker\chi, \Z)$ is not finitely generated. Hence, $\ker\chi$ is not of type $\FP_n$, and we conclude that $\pm[\chi] \notin \Sigma^n(G, \Z)$.

Assume now that $W_{\Fsing}$ is $(n-1)$-acyclic. The long exact sequence of homology for the pair $(W, W_{\Fsing})$ gives $H_i(W, W_{\Fsing}) = 0 \quad \text{for } 1 \le i \le n,$ because $W$ is contractible. 

Let $X_{\Fsing} = \{\sigma \subset X \mid G_\sigma \neq 1\}$ be the subcomplex of $X$ consisting of cells with non-trivial stabilizers. Then, each open cell in $X - X_{\Fsing}$ lies in a free $G$-orbit, and since $W$ is a strong fundamental domain, each orbit has precisely one representative in $W - W_{\Fsing}$. In other words, as sets of cells, we have
$$X - X_{\Fsing} = G \times (W - W_{\Fsing}).$$

It follows that the chain complex of the pair $(X, X_{\Fsing})$ can be written as
$$\mathcal{C}_\bullet(X, X_{\Fsing}) = \Z G \otimes_\Z \mathcal{C}_\bullet(W, W_{\Fsing}),$$
and therefore the homology satisfies
$$H_i(X, X_{\Fsing}) = \Z G \otimes_\Z H_i(W, W_{\Fsing}) = 0 \quad \text{for } 1 \le i \le n.$$

Using the long exact homology sequence for the pair $(X, X_{\Fsing})$ and the fact that $X$ is contractible, we obtain $\overline{H}_{i-1}(X_{\Fsing}) = 0 \quad \text{for } 1 \le i \le n,$
so the subcomplex $Y := X_{\Fsing}$ is $(n-1)$-acyclic. 

At this point, we have an $(n-1)$-acyclic $G$-space $Y$ with cocompact $G$-action, and a character $\chi$ such that for each cell stabilizer $G_\sigma$ of $Y$, the restrictions $\pm[\chi|_{G_\sigma}]$ are non-zero and lie in $\Sigma^n(G_\sigma, \Z)$. 

Applying Theorem \ref{teo:MMVW}, we conclude that $\pm[\chi] \in \Sigma^n(G, \Z)$.
\end{proof}
    
   \begin{rem}\label{rem:homotopic} {\rm 
The homotopical version of the ``if'' part also holds true, i.e., under the same hypotheses, if $W_{\Fsing}$ is $(n-1)$-connected, then $\pm[\chi]\in\Sigma^n(G)$. To see this, note that one can replace the long exact sequences in homology with the corresponding long exact sequences in homotopy to deduce that if $W_{\Fsing}$ is $(n-1)$-connected, then $X_{\Fsing}$ is also $(n-1)$-connected. The result then follows by applying the homotopical version of Theorem \ref{teo:MMVW}.
}\end{rem}


Let $\sigma\in W_\Fsing$ be a cell and put
\[\Omega_\sigma=\{[\chi]\in S(G)\mid[\chi|_{G_\sigma}]\in\Sigma^n(G_\sigma,\Z)\}.\]
Then 

\begin{lem}\label{lem:density}
    Let $G,X,W$ as in Hypothesis (*) and assume that $\Omega_\sigma\subseteq S(G)$ is dense for any $\sigma\in W_\Fsing$. Then
    \[\Omega_\Z:=\{[\chi]\in S(G)\ \text{discrete}\mid [\chi|_{G_\sigma}]\in\Sigma^n(G_\sigma,\Z)\cap-\Sigma^n(G_\sigma,\Z)\ \text{for every cell }\sigma\in W_{\Fsing}\}\]
    is dense in $S(G)$.
\end{lem}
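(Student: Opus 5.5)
The argument is a Baire-category / open-dense intersection in the finite-dimensional sphere $S(G)$. The key facts are that $\Sigma^n(G_\sigma,\Z)$ is open in $S(G_\sigma)$ (Bieri--Renz) and that $W$ is finite, so $W_\Fsing$ has finitely many cells.

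\textbf{Step 1: openness.} For each cell $\sigma\in W_\Fsing$, consider the restriction map $\mathrm{Hom}(G,\mathbb{R})\to\mathrm{Hom}(G_\sigma,\mathbb{R})$, $\chi\mapsto\chi|_{G_\sigma}$. It is linear, hence continuous. Let $U_\sigma\subseteq S(G)$ be the set of classes $[\chi]$ with $\chi|_{G_\sigma}\neq 0$; this set is open. On $U_\sigma$ the restriction induces a continuous map $U_\sigma\to S(G_\sigma)$. Since $\Sigma^n(G_\sigma,\Z)$ is open in $S(G_\sigma)$, its preimage $\Omega_\sigma$ is open in $U_\sigma$, and hence open in $S(G)$. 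The same holds for $-\Omega_\sigma$, because $\chi\mapsto-\chi$ is a homeomorphism of $S(G)$ mapping $\Omega_\sigma$ onto $\{[\chi]\mid[\chi|_{G_\sigma}]\in-\Sigma^n(G_\sigma,\Z)\}$. Thus each $\Omega_\sigma\cap-\Omega_\sigma$ is open.

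\textbf{Step 2: density.} By hypothesis $\Omega_\sigma$ is dense. Since the antipodal map is a homeomorphism, $-\Omega_\sigma$ is also dense. Both sets are open and dense, and the finite intersection of open dense sets is open and dense. Therefore
\[\Omega:=\bigcap_{\sigma\in W_\Fsing}\left(\Omega_\sigma\cap-\Omega_\sigma\right)\]
is open and dense in $S(G)$; here we use that $W_\Fsing$ is finite because $W$ is finite by cocompactness. If $W_\Fsing$ is empty, we take $\Omega=S(G)$.

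\textbf{Step 3: discrete points.} By definition, $\Omega_\Z$ is the set of discrete characters lying in $\Omega$. Discrete characters are dense in $S(G)$ by \cite[Lemma B3.24]{Strebel}. A dense set meets every nonempty open set, so for any nonempty open $V\subseteq S(G)$ the set $V\cap\Omega$ is nonempty and open, and therefore contains a discrete character. Hence $\Omega_\Z$ is dense.

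The main point requiring care is Step 1. One must make sure that membership $[\chi|_{G_\sigma}]\in\Sigma^n(G_\sigma,\Z)$ implicitly requires $\chi|_{G_\sigma}\neq0$, so that $\Omega_\sigma\subseteq U_\sigma$. One must also use that $G_\sigma$ is finitely generated, which holds since it is of type $\FP_\infty$, so that $S(G_\sigma)$ and its open subset $\Sigma^n(G_\sigma,\Z)$ make sense. With these checks in place, the remaining steps are routine.
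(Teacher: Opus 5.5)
Your proof is correct and follows essentially the same route as the paper: openness of each $\pm\Omega_\sigma$ via continuity of restriction and openness of $\Sigma^n(G_\sigma,\Z)$, then a finite intersection of open dense sets, then intersection with the dense set of discrete characters. Restricting the map to the open set $U_\sigma$ where $\chi|_{G_\sigma}\neq 0$ is slightly more careful than the paper, which treats restriction as a continuous map defined on all of $S(G)$.
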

\begin{proof}
    Consider, for any cell $\sigma\in W_\Fsing$, the restriction map
    \[\mathrm{res|_{G_\sigma}}:S(G)\to S(G_\sigma).\]
Then $\Omega_\sigma$ is precisely the preimage under $\mathrm{res|_{G_\sigma}}$ of the open set $\Sigma^n(G_\sigma,\Z)$. As $\mathrm{res|_{G_\sigma}}$ is continuous, we deduce that $\Omega_\sigma$ is open. The same happens for the set $-\Omega_\sigma$. Moreover, we are assuming that $\Omega_\sigma$ is dense, so also $-\Omega_\sigma$ is. As $W_{\Fsing}$ has a finite number of cells we see that the set
 \[\Omega:=\{[\chi]\in S(G)\mid [\chi|_{G_\sigma}]\in\Sigma^n(G_\sigma,\Z)\cap-\Sigma^n(G_\sigma,\Z)\ \text{for every cell }\sigma\in W_{\Fsing}\}=\bigcap_{\sigma\in W_\Fsing}(\Omega_\sigma\cap-\Omega_\sigma)\]
 is the intersection of finitely many open and dense subsets, so $\Omega$ itself is open and dense. Finally, as the set of discrete characters in $S(G)$ is also dense and $\Omega_\Z$ is the intersection of that set with $\Omega$, we get the result.

\end{proof}

The condition that the sets $\Omega_\sigma$ are dense might be difficult to check, so the next Lemma will be useful in many cases. 

\begin{lem}\label{lem:densitycond} Let $H\leq G$ and $\pi:G\to G_\mathrm{ab}$ the abelianization map. The set
\[\Omega_H=\{[\chi]\in S(G)\mid[\chi|_{H}]\in\Sigma^n(H,\Z)\}\]
is dense in $S(G)$ in any of the following cases.
\begin{itemize}
    \item[(i)] if $\Sigma^n(H,\Z)=S(H)$ and $\pi(H)$ is infinite, or

    \item[(ii)] if $\pi(Z(H))$ is infinite, or

    \item[(iii)] if $\Sigma^n(H,\Z)\subseteq S(H)\neq\emptyset$ is dense and $\pi|_H$ has finite kernel.
\end{itemize}
\end{lem}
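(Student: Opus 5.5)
The common framework is the restriction map $\mathrm{res}_H:\mathrm{Hom}(G,\mathbb{R})\to\mathrm{Hom}(H,\mathbb{R})$. It is linear, so it is continuous. Since every character factors through $G_{\mathrm{ab}}\otimes\mathbb{R}$, the image of $\mathrm{res}_H$ can be described through $\pi(H)$. Let $V=\{\chi\in\mathrm{Hom}(G,\mathbb{R})\mid \chi|_H=0\}$; this is a linear subspace. If $\pi(H)$ is infinite, then $\pi(H)\otimes\mathbb{R}\neq 0$ inside $G_{\mathrm{ab}}\otimes\mathbb{R}$. Some character of $G$ is then nonzero on $H$, so $V$ is a proper subspace, and its trace $S(G)\cap V$ is nowhere dense in $S(G)$. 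On the complement of $V$ the map $\mathrm{res}_H$ induces a continuous map $S(G)\setminus V\to S(H)$, and $\Omega_H$ is the preimage of $\Sigma^n(H,\Z)$ under it. In each case the strategy is to show that this preimage contains a dense open set.

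For (i), every $[\chi]\notin V$ satisfies $[\chi|_H]\in S(H)=\Sigma^n(H,\Z)$, so $\Omega_H\supseteq S(G)\setminus V$, and this set is dense. For (ii), $\pi(Z(H))$ infinite forces $\pi(H)$ infinite. Let $V'=\{\chi\mid \chi(Z(H))=0\}$, which is again a proper subspace by the same argument applied to $Z(H)$. If $\chi\notin V'$, then $\chi|_H$ is nonzero on $Z(H)$, and Lemma \ref{lem:SigmaCentre} gives $[\chi|_H]\in\Sigma^n(H)\subseteq\Sigma^n(H,\Z)$. This needs $H$ of type $\mathrm{F}_n$. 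That holds in the intended application, where stabilizers are $\FP_\infty$, but I expect to have to add it as an implicit assumption, or else argue with the homological version of the center criterion, which holds for $\FP_n$. Hence $\Omega_H\supseteq S(G)\setminus V'$ is dense.

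For (iii), the point is that the finite kernel makes $\mathrm{res}_H$ surjective. A character $\psi:H\to\mathbb{R}$ kills the finite group $\ker(\pi|_H)$, so it factors through $\pi(H)\le G_{\mathrm{ab}}$. It then extends from $\pi(H)\otimes\mathbb{R}$ to $G_{\mathrm{ab}}\otimes\mathbb{R}$ by choosing a complementary linear subspace, and composing with $\pi$ gives a character of $G$ restricting to $\psi$. A surjective linear map between finite-dimensional spaces is open. Its restriction to $\mathrm{Hom}(G,\mathbb{R})\setminus V$ therefore induces an open continuous surjection $S(G)\setminus V\to S(H)$.

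Now take a nonempty open $U\subseteq S(G)$. Shrinking it, we may assume $U\subseteq S(G)\setminus V$. Its image in $S(H)$ is open and nonempty, so by density it meets $\Sigma^n(H,\Z)$. Any point of $U$ mapping into $\Sigma^n(H,\Z)$ lies in $\Omega_H$. The nonemptiness of $S(H)$ guarantees $\pi(H)$ is infinite, so $V$ is proper.

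The main obstacle is this last step: the surjectivity and openness of the restriction map in case (iii), together with the bookkeeping for the removed subspace $V$. Cases (i) and (ii) are routine once one observes that proper linear subspaces have nowhere dense trace on the sphere.
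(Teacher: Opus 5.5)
Your proof is correct. For (i) and (ii) it is the paper's proof: discard the trace on $S(G)$ of the proper subspace $\mathrm{Ann}(H)$, resp.\ $\mathrm{Ann}(Z(H))$, and in (ii) apply Lemma~\ref{lem:SigmaCentre} to $H$. Your caveat in (ii) is well founded, and the paper passes over it. That lemma is stated for type $\mathrm{F}_n$, and (ii) in fact needs $H$ to be at least of type $\FP_n$, since otherwise $\Sigma^n(H,\Z)=\emptyset$. The homological central-character criterion for $\FP_n$ groups is the right tool: multiplication by a central $z$ with $\chi(z)>0$ is a chain endomorphism of a free resolution that lifts the identity and raises the $\chi$-valuation. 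This suffices in the applications, where the stabilizers are $\FP_\infty$.

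For (iii) you take a different route from the paper. The paper argues that $\mathrm{res}_H\colon S(G)\to S(H)$ is surjective, so it has a continuous inverse $\tau$ with $\tau(\Sigma^n(H,\Z))=\Omega_H$, which gives density. Taken literally, this works only when $\mathrm{Hom}(G,\mathbb{R})\to\mathrm{Hom}(H,\mathbb{R})$ is a linear isomorphism. Otherwise a section $\tau$ embeds the lower-dimensional sphere $S(H)$ as a nowhere dense subset of $S(G)$. In any case $\Omega_H$ is the preimage $\mathrm{res}_H^{-1}(\Sigma^n(H,\Z))$, not an image under $\tau$.

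Your argument is what proves (iii) in general. Restriction is a surjective linear map, hence open, so it induces a continuous open surjection from the dense open set $S(G)\setminus V$ onto $S(H)$. Preimages of dense sets under open maps are dense. Your argument also uses only that every character of $H$ extends to $G$. So it works under the weaker hypothesis that $H_{\mathrm{ab}}\to G_{\mathrm{ab}}$ has finite kernel, which is what the paper actually checks when it invokes (iii) for Fouxé-Rabinovitch groups and graph products.
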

\begin{proof}
    In case (i),  \[\mathrm{Ann}(H)=\{[\chi]\in S(G)\mid\chi(H)=0\}\]
    is the intersection with $S(G)$ of a proper linear subspace and $\Omega_H=S(G)-\mathrm{Ann}(H)$ so $\Omega_H$ is dense.

Case (ii) is similar, because now $\mathrm{Ann}(Z(H))$ is the intersection with $S(G)$ of a proper linear subspace and Lemma \ref{lem:SigmaCentre} implies
\[S(G)-\mathrm{Ann}(Z(H))\subseteq\Omega_H.\]

Finally, assume (iii). As $\pi|_H$ has finite kernel, the restriction map 
  \[\mathrm{res|_{H}}:S(G)\to S(H)\]
  is surjective so there is an inverse $\tau:S(H)\to S(G)$ which is continuous. Therefore the image under $\tau$ of $\Sigma^n(H,\Z)$ is dense in $S(G)$. As 
  \[\tau(\Sigma^n(H,\Z))=\Omega_H\]  
  we are done.
\end{proof}

Some examples of groups $G$ so that $\Sigma^n(G,\Z)=S(G)$ include:
\begin{itemize}
\item infinite polycyclic-by-finite groups (by \cite{Hall} they are of type $\mathrm{FP}_\infty$, and all their subgroups are also of type $\mathrm{FP}_\infty$; hence by Theorem \ref{teo:FinitenesSigma} their Sigma invariants equal the whole sphere),  
\item spherical Artin groups of types $\mathbb{A}_n$, $\mathbb{D}_n$, $\mathbb{E}_6$, $\mathbb{E}_7$, $\mathbb{E}_8$, $\mathbb{H}_3$, and $\mathbb{H}_4$ (c.f.\ \cite[Theorem~1.4]{Escartin-Martinez}).  
\end{itemize}

We are ready now to show Theorem \ref{teo:A} from the introduction

\bigskip


\noindent{\sl Proof of Theorem \ref{teo:A}.} Let $\Omega_\Z$ be the set from Lemma \ref{lem:density}. Our hypothesis imply that $\Omega_\Z$ is dense in $S(G)$.

Assume that (v) holds. Then Theorem \ref{teo:charsigma} implies that $\Omega \subseteq \Sigma^n(G, \mathbb{Z})$, and since $\Omega_\Z$ is dense, we obtain (i). If $\Sigma^n(G,\Z)$ is dense, so is $-\Sigma^n(G,\Z)$ and as both are open, we see that (i) implies (ii).
Clearly, (ii) implies (iii). And if we have (iii), as $\Sigma(G,\Z)\cap-\Sigma^n(G,\Z)$ is open and the set of discrete characters is dense, we must have some discrete $[\chi]\in\Sigma(G,\Z)\cap-\Sigma^n(G,\Z)$. Using Theorem \ref{teo:FinitenesSigma} for $N=\ker(\chi)$ implies that $\ker(\chi)$ is of type $\FP_n$ so we get (iv). The fact that (iv) implies (iii) is obvious.

Finally, if (iii) holds, as $\Sigma^n(G,\Z)\cap-\Sigma^n(G,\Z)$, is open and non empty we get  
\[\Omega_\Z\cap\Sigma^n(G,\Z)\cap-\Sigma^n(G,\Z)\neq\emptyset\]
so there is some discrete $[\chi] \in \Omega$ as in Theorem \ref{teo:charsigma} and we deduce (v).
\qed

\bigskip



As the ``if'' part of Theorem \ref{teo:charsigma} is also true in the homotopical setting, we also deduce the following homotopical weaker version of Theorem \ref{teo:A}.

\begin{teo}\label{teo:homotopicA}
    Let $G\curvearrowright X$ be as in Hypothesis (*), and assume that $W_{\Fsing}$ is simply connected and that for each cell $\sigma\in W_\Fsing$, the set
    \[\Omega^t_\sigma=\{[\chi]\in S(G)\mid[\chi|_{G_\sigma}]\in\Sigma^n(G_\sigma)\}\]
 is dense in $S(G)$. Then the following are equivalent:
    \begin{itemize}
     \item[(i)] $\Sigma^n(G)$ is dense in $S(G)$,
     \item[(ii)] $\Sigma^n(G)\cap -\Sigma^n(G)$ is dense in $S(G)$,
    \item[(iii)] $\Sigma^n(G)\cap -\Sigma^n(G)\neq\emptyset$,
    \item[(iv)] $G$  fibers homotopically at degree $n$,
    \item[(v)] $W_{\Fsing}$ is $(n-1)$-connected.
\end{itemize}
\end{teo}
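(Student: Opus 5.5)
The plan is to run the proof of Theorem \ref{teo:A} again, with homological inputs replaced by homotopical ones. The homotopical ``if'' direction comes from Remark \ref{rem:homotopic}. The ``only if'' direction goes through the homological theorem, using Renz's relation $\Sigma^n(G)=\Sigma^n(G,\Z)\cap\Sigma^2(G)$ together with the simple connectivity of $W_\Fsing$.

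\emph{Density step.} First I prove the homotopical analogue of Lemma \ref{lem:density}. Each $\Omega^t_\sigma$ is the preimage under the continuous restriction map of the open set $\Sigma^n(G_\sigma)$, so it is open. By hypothesis it is dense, and hence $-\Omega^t_\sigma$ is open and dense too. Since $W_\Fsing$ is finite, the set
\[\Omega^t=\bigcap_{\sigma\in W_\Fsing}(\Omega^t_\sigma\cap-\Omega^t_\sigma)\]
is open and dense, and so is its set $\Omega^t_\Z$ of discrete characters. Because $\Sigma^n(G_\sigma)\subseteq\Sigma^n(G_\sigma,\Z)$, the sets $\Omega^t_\sigma$ are contained in the sets $\Omega_\sigma$. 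Therefore the sets $\Omega_\sigma$ are dense, and Theorem \ref{teo:A} applies as well.

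\emph{Chain of implications.}
\begin{itemize}
\item[(v)$\Rightarrow$(i)] Take $[\chi]\in\Omega^t_\Z$ with $\chi$ nonzero. Remark \ref{rem:homotopic} gives $\pm[\chi]\in\Sigma^n(G)$, and density follows. One point needs care here: Theorem \ref{teo:MMVW} requires $\chi(G_\sigma)\neq0$. This holds because $[\chi|_{G_\sigma}]$ lies in $S(G_\sigma)$.
\item[(i)$\Rightarrow$(ii)] $\Sigma^n(G)$ and $-\Sigma^n(G)$ are open and dense, so their intersection is dense.
\item[(ii)$\Rightarrow$(iii)] This is trivial.
\item[(iii)$\Rightarrow$(iv)] The intersection is open, so it contains a discrete character. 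Theorem \ref{teo:FinitenesSigma}(ii) then shows its kernel is of type $\mathrm{F}_n$. Theorem \ref{teo:FinitenesSigma} needs $G$ of type $\mathrm{F}_n$. I would justify this by Brown's criterion in its homotopical form, or simply note that for $n\ge2$ a nonempty $\Sigma^n(G)$ already presupposes it.
\item[(iv)$\Rightarrow$(iii)] This is immediate from Theorem \ref{teo:FinitenesSigma}(ii).
\end{itemize}

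\emph{Main obstacle: (iii)$\Rightarrow$(v).} Since $\Sigma^n(G)\subseteq\Sigma^n(G,\Z)$, condition (iii) gives $\Sigma^n(G,\Z)\cap-\Sigma^n(G,\Z)\neq\emptyset$. Theorem \ref{teo:A} then shows $W_\Fsing$ is $(n-1)$-acyclic. As $W_\Fsing$ is simply connected by hypothesis, the Hurewicz theorem upgrades $(n-1)$-acyclic to $(n-1)$-connected. This is exactly where the simple connectivity assumption is used, and why this homotopical theorem is only a weaker version of Theorem \ref{teo:A}.

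There is a small edge case for $n\le1$. Connectedness of $W_\Fsing$ is $0$-acyclicity with respect to reduced homology, so it is already covered. For $n=0$ there is nothing to prove.
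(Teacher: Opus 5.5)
Your proposal is correct and follows the paper's proof step for step. The structure is the same: density of $\Omega^t_\Z$ as in Lemma~\ref{lem:density}, then (v)$\Rightarrow$(i) via Remark~\ref{rem:homotopic}, then the formal chain (i)$\Rightarrow$(ii)$\Rightarrow$(iii)$\Leftrightarrow$(iv), and finally (iii)$\Rightarrow$(v) by passing to $\Sigma^n(G,\Z)\cap-\Sigma^n(G,\Z)\neq\emptyset$, applying Theorem~\ref{teo:A} via $\Omega^t_\sigma\subseteq\Omega_\sigma$, and upgrading acyclicity to connectivity with Hurewicz. Your extra checks that $\chi(G_\sigma)\neq 0$ and that $G$ is of type $\mathrm{F}_n$ are details the paper leaves implicit.
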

\begin{proof} As in the proof of Theorem \ref{teo:A} and with the same argument as in Lemma \ref{lem:density}, the hypothesis implies that
 \[\Omega^t_\Z:=\{[\chi]\in S(G)\ \text{discrete}\mid [\chi|_{G_\sigma}]\in\Sigma^n(G_\sigma)\cap-\Sigma^n(G_\sigma)\ \text{for every cell }\sigma\in W_{\Fsing}\}\]
 is dense in $S(G)$. 
If (v) holds, the `ìf" part of Theorem \ref{teo:charsigma} implies that $\Omega^t_\Z \subseteq \Sigma^n(G)$, so we get (i). The following implications
\[\mathrm{(i)}\Rightarrow\mathrm{(ii)}\Rightarrow\mathrm{(iii)}\iff\mathrm{(iv)}\]
are exactly as in the proof of Theorem \ref{teo:A}.

Finally, assume (iii). As the homotopical invariants always lie inside the homotopic invariants, we deduce that also
\[\Sigma^n(G,\Z)\cap-\Sigma^n(G,\Z)\neq\emptyset.\]
Moreover, the hypothesis on the density of $\Omega^t_\sigma$ for each cell $\sigma\in W_\Fsing$ implies that also the sets $\Omega_\sigma$ are dense. This means that we can use  Theorem \ref{teo:A} and deduce that $W_{\Fsing}$ is $(n-1)$-acyclic.
By Hurewicz Theorem, as $W_\text{sing}$ is simply connected, it is also $(n-1)$-connected so we get (v). 
\end{proof}

\section{Fouxé-Rabinovitch automorphism groups of free products}\label{sec:Fouxe}

Let $G_1,\ldots,G_n$ be groups and consider their free product $S = G_1 \ast \cdots \ast G_n$. The Fouxé-Rabinovitch automorphism group $\mathrm{FR}(S)$ is the
subgroup of $\Aut(S)$ generated by the automorphisms given by:

$$\left\{\begin{array}{l}
C_{i}^v(w)=vwv^{-1}, \text{ if } w\in G_i \\
C_i^v(z)=z, \text{ if } z \notin G_i.
\end{array}\right.
$$
for distinct $i,j$ and some $v\in G_j$. These automorphisms are called \emph{partial conjugations}.
Since the subgroup $\mathrm{Inn}(G)$ of inner automorphisms lies inside $\mathrm{FR}(S)$, we can define the outer version as
$$
\mathrm{OFR}(S)=\mathrm{FR}(S)/\mathrm{Inn}(S).
$$
An important particular case of Fouxé-Rabinovitch groups are pure symmetric automorphisms of free groups. 
In \cite{MM}, McCullough--Miller constructed a complex on which the Fouxé-Rabinovitch automorphism group of a free product acts with nice properties. In this section, we introduce this complex, observe that this action satisfies Hypothesis (*) and give conditions under which Theorem \ref{teo:A} can be applied.

\begin{defi}\label{def:bipartite}
A bipartite labeled tree on $[n]=\lbrace 1,\dots,n\rbrace$ is a tree with $n$ vertices labeled by the integers from $1$ to $n$ and some finite number of unlabeled vertices, satisfying:
\begin{enumerate}
    \item Each edge connects a labeled vertex to an unlabeled vertex.
    \item Each unlabeled vertex has valence at least $2$.
\end{enumerate}
\end{defi}
We now define a partial order on the set of all bipartite labeled trees on $[n]$. If $T$ is a bipartite labeled tree on $[n]$ having a labeled vertex $v$ linked to two unlabeled vertices $w_1,w_2$, we can form a new tree $T'$ by identifying $w_1$ with $w_2$ and the edge joining $w_1$ to $v$ with the edge joining $w_2$ to $v$ . We say that $T'$ is obtained from $T$ by folding at $v$. We write $T_1\preceq T_2$ if $T_1$ can be obtained from $T_2$ by a (possibly empty) sequence of foldings. We get a poset $\Wh_n$ called the \emph{Whitehead poset}. The star with $n+1$ vertices whose unique unlabeled vertex is the center is denoted by $\OO_n$, one can check that $\OO_n$, also called the \emph{nuclear element}, is the minimal element of this poset. Let $\Wh_n^0$ be the poset that we obtain when we remove the nuclear element, i.e. $\Wh_n^0=\Wh_n\setminus\lbrace \OO_n\rbrace$. In \cite{BMcCMM}, Brady, McCamond, Meier and Miller show that the geometric realization of $\Wh_n^0$ is Cohen-Macaulay which has the following consequence:

\begin{prop}\cite[Theorem 5.13]{BMcCMM}\label{Whitehead conectivity}
The geometric realization of $\Wh_n^0$ is $(n-4)$-connected.
\end{prop}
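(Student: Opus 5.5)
The statement is cited from \cite[Theorem 5.13]{BMcCMM}, so the plan is to recover it from the Cohen--Macaulay property of the whole Whitehead poset $\Wh_n$ rather than to prove anything new. Concretely, I would show that $\Wh_n$ is graded, with rank given by the number of unlabeled vertices (each folding lowers this number by one), and that its maximal elements, the trivalent-type trees with $n-1$ unlabeled vertices, all have rank $n-2$. The nuclear element $\OO_n$, with one unlabeled vertex, is then the unique minimum of rank $0$. It follows that $\Wh_n^0=\Wh_n\setminus\{\OO_n\}$ is a pure poset whose geometric realization has dimension $n-3$.

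The key step is to prove that $\Wh_n^0$ is shellable, or at least Cohen--Macaulay. For this I would follow Brady--McCamond--Meier--Miller and identify lower intervals $[\OO_n,T]$ in $\Wh_n$ with products of partition lattices. A tree $T$ is determined by its set of unlabeled vertices, each of which records a partition of the labels obtained by cutting at it. Moreover, unfolding at a labeled vertex $v$ of valence $k$ corresponds to refining a partition of the $k$ edges at $v$. This means that $\Wh_n$ is covered by intervals isomorphic to products of partition lattices $\Pi_k$, which are known to be EL-shellable. One then needs a global recursive atom ordering (equivalently, a CL-labelling) on $\Wh_n$ with a bottom element adjoined, and building it compatibly across these local product structures is the main obstacle. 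My first attempt would be to order atoms lexicographically by the label at which the first unfolding occurs, and then check the Björner--Wachs recursive atom ordering axioms using the partition-lattice structure on each upper interval.

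Once shellability holds, the realization of a pure shellable complex of dimension $d=n-3$ is homotopy equivalent to a wedge of $d$-spheres, and is therefore $(d-1)=(n-4)$-connected. This gives the statement. The remaining small cases, namely $n\le 3$, where $(n-4)$-connected means at most nonempty, I would check directly: $\Wh_3^0$ is nonempty, and for $n\le 2$ the condition is vacuous.
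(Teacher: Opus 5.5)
Your reduction is the same as the paper's. The paper gives no argument of its own: it quotes the Cohen--Macaulay property of $|\Wh_n^0|$ from \cite{BMcCMM} and reads off $(n-4)$-connectivity. Your dimension count supporting this is right: $\Wh_n$ is graded by the number of unlabeled vertices, $\OO_n$ is the bottom, and every maximal element has $n-1$ unlabeled vertices, so $|\Wh_n^0|$ is pure of dimension $n-3$.

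Two small corrections. First, the maximal trees are those whose unlabeled vertices are all \emph{bivalent} (subdivisions of trees on $[n]$), not trivalent. Second, if you fall back on ``at least Cohen--Macaulay'', it must be the homotopy (Quillen) version, since the homological version only gives $(n-4)$-acyclicity.

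The middle paragraph, however, is not a proof. If you mean it to replace the citation, there is a gap. The Cohen--Macaulay property is the entire content of the cited theorem, and you leave the required ordering as the ``main obstacle'' with an unverified guess.

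The sketch also points at the wrong structure.
The bounded poset to shell is $\Wh_n\cup\{\hat 1\}$, whose proper part is $\Wh_n^0$. You must adjoin a top, since $\Wh_n$ already has the bottom $\OO_n$.

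In a Bj\"orner--Wachs recursive atom ordering, the recursion runs through the upper intervals $[T,\hat 1]$, and these are not products of partition lattices. Unfolding happens independently at each unlabeled vertex $w$ of $T$, so $\{T'\mid T\preceq T'\}\cong\prod_w \Wh_{d_w}$ with a single top adjoined. Here $d_w$ is the valence of $w$, and the factor is the Whitehead poset on the labels adjacent to $w$.

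The products of partition lattices $\prod_v\Pi_{d_v}$, over the labeled vertices $v$, are instead the lower intervals $[\OO_n,T]$. They would enter a recursive \emph{coatom} ordering, not an atom ordering. So you would need either an induction on $n$ through smaller Whitehead posets or a dual ordering argument, and neither is supplied.

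If you simply cite \cite{BMcCMM} for the Cohen--Macaulay property, as the paper does, your argument is complete.
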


The Whitehead poset is the main ingredient in the construction of another space: the McCullough-Miller complex $\MM_S$. The complex $\MM_S$ was   defined in \cite{MM} by gluing copies of $\Wh_n$ using certain equivalence relationship. As shown in \cite[Section 4]{MM}, it is a contractible complex where the group of $\mathrm{OFR}(S)$ acts and by construction, the geometric realization  $|\Wh_n|$ is  a  contractible strong fundamental domain of the action, so  Hypothesis (*) is satisfied. It is important to observe that although the whole McCullough-Miller complex $\MM_S$ does depend on the free factors $G_i$, the poset $\Wh_n^0$ does not, it depends only on the number $n$ of factors. 

As shown in \cite{Griffin}, an alternative way to define the complex $\MM_S$ is as the geometric realization $C(\mathcal{H})$ of the coset poset of certain family $\mathcal{H}$ of subgroups of the form $\prod_{j=1}^k G_{i_j}$ for some $i_j\in\lbrace 1,\dots,n\rbrace$, up to conjugacy these subgroups are precisely the stabilizers of the action  and one has $|\Wh_n|_\Fsing=|\Wh_n^0|$. 

To apply Theorem \ref{teo:A}, it will be useful to understand the abelianization of the group $\mathrm{OFR}(S)$, for which we use a standard presentation.

\begin{teo}[\cite{Gilbert}]\label{teo:presFR}
Let $S=G_1\ast\dots\ast G_n$. Then $\mathrm{FR}(S)$ is the group generated by the automorphisms $C_i^v$ for $i=1,\dots,n$, with $j\neq i$ and $1\neq v\in G_j$ subject to the following relations:
\begin{enumerate}
    \item $C_i^vC_i^w=C_i^{vw}$ for $v,w\in G_j$.
   \item $[C_i^v,C_k^w]=1$ for $v\in G_j,w\in G_l$ and $i\neq j,i\neq k,l\neq k,$.
    \item $[C_i^vC_k^v,C_i^w]=1$ for $v\in G_j,w\in G_k$ and distinct $i,j,k$.
\end{enumerate}
\end{teo}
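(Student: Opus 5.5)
The plan is to prove the two directions separately. First, check that the listed relations hold in $\Aut(S)$. Second, show that every relation among partial conjugations is a consequence of (1)--(3). I would organize the second step through the geometry already introduced, the McCullough--Miller complex $\MM_S$, passing first to $\mathrm{OFR}(S)$ and then lifting to $\mathrm{FR}(S)$.

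\textbf{Verifying the relations.} This step is a direct computation on generators of $S$.
\begin{itemize}
\item For (1), note that $C_i^vC_i^w$ conjugates $G_i$ by $vw$ and fixes every other factor.
\item For (2), take $i\neq k$ with $v\notin G_i$ and $w\notin G_k$. The two automorphisms touch different factors, and the conjugating elements are never moved by the other automorphism, except when $l=i$ or $j=k$. Those cases need a short check evaluating both sides on $G_i$ and on $G_k$.
\item For (3), observe that $C_i^vC_k^v$ conjugates $G_i\ast G_k$ simultaneously by $v\in G_j$. This product therefore commutes with $C_i^w$ for $w\in G_k$, since conjugation by $v$ on the subgroup $G_i\ast G_k$ is equivariant with respect to conjugating $G_i$ by an element of $G_k$.
\end{itemize}
Together these checks give a surjection $\Phi:\widetilde{F}\to\mathrm{FR}(S)$, where $\widetilde{F}$ is the abstractly presented group.

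\textbf{Completeness of the relations.} Here I would use the fact, recalled above from \cite{MM}, that $\mathrm{OFR}(S)$ acts on the contractible, in particular simply connected, complex $\MM_S$ with strong fundamental domain $|\Wh_n|$. By the standard result for actions with a strong fundamental domain on a simply connected complex (Brown, or the simple complexes of groups viewpoint used later in the paper), $\mathrm{OFR}(S)$ is the colimit of the vertex stabilizers of $|\Wh_n|$ amalgamated along the stabilizers of edges. By the description of \cite{Griffin}, these stabilizers are, up to conjugacy, products $\prod_j G_{i_j}$. Each such product is realized inside $\mathrm{OFR}(S)$ by explicit partial conjugations, namely a factor $G_j$ acting by conjugation on a union of the other factors determined by the tree. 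Relations (1) encode the group law inside each copy of $G_j$, and relations (2) and (3) encode the commutations between different factors of a stabilizer. I would therefore show that the images of $\widetilde{F}$ realize this colimit diagram, so that $\widetilde{F}/\langle\text{inner elements}\rangle\to\mathrm{OFR}(S)$ is an isomorphism.

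\textbf{Lifting to $\mathrm{FR}(S)$ and the main obstacle.} Next I would check that $\mathrm{Inn}(S)$ lifts to $\widetilde{F}$. Conjugation by $v\in G_j$ is $\prod_{i\neq j}C_i^v$, and relations (1)--(3) show that these products satisfy the relations of $S=\ast G_j$, with free-product relations only. When $S$ is centerless, for instance when there are at least two nontrivial factors, this exhibits $\mathrm{FR}(S)$ as the extension $1\to S\to\mathrm{FR}(S)\to\mathrm{OFR}(S)\to 1$ with a compatible presentation, which gives injectivity of $\Phi$. The step I expect to be hardest is matching the colimit presentation from $\MM_S$ with the generators $C_i^v$. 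One has to identify, for each vertex of $\Wh_n$, which partial conjugations generate its stabilizer, and then check that the edge-stabilizer identifications are precisely consequences of (2) and (3). If this bookkeeping becomes unmanageable, I would instead run a peak-reduction style induction on $n$. That argument uses the projection $S\to G_1\ast\cdots\ast G_{n-1}$ killing $G_n$ and shows that its kernel is generated, with relations (1)--(3), by the partial conjugations involving $G_n$.
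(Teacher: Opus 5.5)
The paper gives no proof of this statement; it is quoted from Gilbert. Your proposal does not supply one either: the verification step contains an error, and the completeness step is only announced.

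\textbf{Relation (2) fails in the cases you set aside.} You note that the conjugating elements are moved exactly when $l=i$ or $j=k$, and say a short check handles these cases. In fact these are precisely the cases where the commutator is nontrivial. Take $i,k,l$ distinct, $v\in G_k$ (so $j=k$), $w\in G_l$ and $1\neq x\in G_i$. Then $C_i^vC_k^w(x)=vxv^{-1}$, while $C_k^wC_i^v(x)=(wvw^{-1})x(wvw^{-1})^{-1}$, and these differ in the free product. The case $l=i$ is similar (evaluate on $G_k$). So with (2) read literally, the relations do not hold in $\mathrm{FR}(S)$ and your map $\Phi$ does not exist. The printed side conditions $i\neq j$, $l\neq k$ are vacuous. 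The correct relation (McCool's, as generalized by Gilbert) is $[C_i^v,C_k^w]=1$ for $i\neq k$, $i\neq l$, $j\neq k$, and your proof has to say this rather than assert that the check goes through. A smaller point: with right-to-left composition one has $C_i^v\circ C_i^w=C_i^{wv}$, so (1) presupposes a left-to-right convention.

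\textbf{Completeness and lifting are not carried out, and the lifting is wrong as stated.} Since $\MM_S$ is simply connected and $|\Wh_n|$ is a contractible strong fundamental domain, $\mathrm{OFR}(S)$ is indeed $\mathrm{colim}_{\tau\in\Wh_n}\Stab(\tau)$. But this presentation is not in terms of your generators:
\begin{itemize}
\item Its generators are the multi-factor partial conjugations $C^v_B$, where $v\in G_j$ and $B$ is a branch at $j$ of some tree.
\item Its relations are the foldings $C^v_{B\sqcup B'}=C^v_BC^v_{B'}$ together with the internal relations of each stabilizer.
\item The internal relations include the commutation of $C^v_B$ and $C^w_{B'}$, where $B$ is the branch at $j$ containing $k$ and $B'$ is the branch at $k$ containing $j$. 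This commutation is false in $\Aut(S)$: the commutator is $\mathrm{inn}([v,w])$. It holds only in $\mathrm{Out}(S)$.
\end{itemize}
Deriving all of these from (1)--(3) plus the inner relations is the entire content of the theorem. You explicitly leave it undone and only name peak reduction as a fallback.

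In the lifting step, $\prod_{i\neq j}C_i^v$ fixes $G_j$ pointwise, so it equals conjugation by $v$ only when $v\in Z(G_j)$. For a nonabelian factor, $\mathrm{Inn}(S)$ is not even contained in $\langle C_i^v\rangle$. Indeed, every $C_i^v$ induces the identity on $S/\langle\langle G_2,\dots,G_n\rangle\rangle\cong G_1$, whereas $\mathrm{inn}(w)$ with $w\in G_1\setminus Z(G_1)$ does not. So your extension $1\to S\to\mathrm{FR}(S)\to\mathrm{OFR}(S)\to 1$ is wrong in that case.

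Even for abelian factors, the five-lemma argument needs you to show from (1)--(3) that the subgroup of $\widetilde F$ generated by the lifts $\prod_{i\neq j}C_i^v$ is normal. For instance, you must show that conjugating by $C_j^w$ sends the lift of $v\in G_j$ to the lift of $wvw^{-1}$. You never address this.
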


Using this presentation we have

\begin{lem}\label{lem:stabFR} Let $S=G_1\ast\dots\ast G_n$ be a free product and $G=\mathrm{OFR}(S)$.
    For any cell $\sigma\in|\Wh_n|_\Fsing=|\Wh_n^0|$, the induced map
    \[(G_\sigma)_\mathrm{ab}\buildrel{\pi}\over\rightarrow G_\mathrm{ab}\]
    is injective.
\end{lem}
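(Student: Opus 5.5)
We will compute $G_\mathrm{ab}$ from the presentation in Theorem \ref{teo:presFR}, identify the stabilizers concretely, and check injectivity on the generators of $(G_\sigma)_\mathrm{ab}$.

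\emph{Step 1: the abelianization of $\mathrm{FR}(S)$.} Relation (1) says that for fixed $i\neq j$ the map $v\mapsto C_i^v$ is a homomorphism $G_j\to\mathrm{FR}(S)$. Relations (2) and (3) become trivial after abelianizing: both are commutators. Hence
\[\mathrm{FR}(S)_\mathrm{ab}\cong\bigoplus_{i\neq j}(G_j)_\mathrm{ab},\]
with the $(i,j)$ summand spanned by the classes of $C_i^v$, $v\in G_j$.

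\emph{Step 2: passing to $\mathrm{OFR}(S)$.} Conjugation by $v\in G_j$ equals the product $\prod_{i\neq j}C_i^v$. So $\mathrm{Inn}(S)$ is generated by these products, and the image of $\mathrm{Inn}(S)$ in $\mathrm{FR}(S)_\mathrm{ab}$ is the diagonal $\{(\bar v,\dots,\bar v)\}$ inside $\bigoplus_{i\neq j}(G_j)_\mathrm{ab}$, taken for each $j$. Therefore
\[G_\mathrm{ab}\cong\bigoplus_j\Big(\bigoplus_{i\neq j}(G_j)_\mathrm{ab}\Big)/\Delta_j.\]
For each fixed $j$ this is $n-2$ copies of $(G_j)_\mathrm{ab}$. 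Concretely, any $n-2$ of the coordinates $(i,j)$ with $i\ne j$ survive as a direct summand, and this requires $n\geq 3$.

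\emph{Step 3: the stabilizers.} By the coset-poset description of \cite{Griffin}, every cell in $|\Wh_n^0|$ has a stabilizer conjugate to a product of factors. For the representative in $W$ the stabilizer is not conjugated. The key point is to determine exactly which partial conjugations generate $G_\sigma$. For a bipartite tree $T\neq\OO_n$, I expect the stabilizer of the vertex $T$ to be the direct product, over the labeled vertices $j$ of valence at least $2$, of copies of $G_j$ acting as follows. For each unlabeled neighbour $w$ of $j$, let $B_w$ be the set of labels in the branch of $T$ at $j$ through $w$. Then $v\in G_j$ acts by $\prod_{i\in B_w}C_i^v$. Modulo inner automorphisms, one branch at each $j$ is redundant. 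A cell $\sigma$ is a chain $T_0\prec\cdots\prec T_k$, and its stabilizer is the intersection of the vertex stabilizers, which is $G_{T_0}$ since stabilizers shrink under folding. It therefore suffices to handle vertex stabilizers of the form $\prod_j G_j^{\,d_j-1}$, where $d_j$ is the valence of $j$. The abelianization of this stabilizer is $\bigoplus_j (G_j)_\mathrm{ab}^{\,d_j-1}$.

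\emph{Step 4: injectivity.} Under $\pi$, the generator attached to the branch $w$ at $j$ maps to the vector in the $j$-block of $G_\mathrm{ab}$ that is $\bar v$ on the coordinates $i\in B_w$ and $0$ elsewhere. The branches at $j$ partition $[n]\setminus\{j\}$ into $d_j$ nonempty blocks. Any $d_j-1$ of the block indicator vectors are linearly independent modulo the all-ones diagonal. This holds "with coefficients in $(G_j)_\mathrm{ab}$": the coordinates can be chosen so that a coordinate projection splits them. Different $j$ land in different blocks, so $\pi$ is injective.

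The main obstacle is Step 3, namely justifying the precise description of the stabilizers as these branch products inside $\mathrm{OFR}(S)$. I would extract it from \cite{MM}, where stabilizers are described via the tree structure, and cross-check it with \cite{Griffin}. The linear-algebra in Step 4 is then routine, because the product of partial conjugations over a union of branches is well defined and these products commute across distinct branches.
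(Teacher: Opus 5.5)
\textbf{Gap in Step 2 for non-abelian factors.} Steps 1 and 4 are fine. They are exactly the check the paper means when it says injectivity can be read off from Gilbert's presentation. The problem is Step 2, together with the ``one branch is redundant modulo inner automorphisms'' remark in Step 3, which rests on the same claim.

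For $v\in G_j$, the inner automorphism $\mathrm{Ad}(v)$ acts on $G_j$ itself by $x\mapsto vxv^{-1}$, whereas $\prod_{i\neq j}C_i^v$ fixes $G_j$. The two agree only when $v\in Z(G_j)$. In fact the failure is stronger. Every partial conjugation induces the identity on the retraction $S\to G_j$ that kills the other factors, while $\mathrm{Ad}(v)$ induces conjugation by $v$ there. So for $v\notin Z(G_j)$, $\mathrm{Ad}(v)$ is not a product of partial conjugations at all. (For non-abelian factors, $\mathrm{OFR}(S)$ has to be read as the image of $\mathrm{FR}(S)$ in $\mathrm{Out}(S)$.)

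Consequently, two of your claims are only justified when all $G_j$ are abelian: that the image of the inner automorphisms in $\mathrm{FR}(S)_{\mathrm{ab}}$ is the diagonals $\Delta_j$, and hence your formula for $G_{\mathrm{ab}}$. In the abelian case your argument is complete. In general you would need $\mathrm{FR}(S)\cap\mathrm{Inn}(S)$ and its image in $\mathrm{FR}(S)_{\mathrm{ab}}$, and your argument does not provide this.

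\textbf{How the paper avoids it.} Put $\overline S=(G_1)_{\mathrm{ab}}*\cdots*(G_n)_{\mathrm{ab}}$. The kernel of $S\to\overline S$ is invariant, so there is a homomorphism $G\to\overline G=\mathrm{OFR}(\overline S)$. It carries $G_\sigma\cong\prod_j G_{i_j}$ onto $\overline G_\sigma\cong\prod_j (G_{i_j})_{\mathrm{ab}}$, so $(G_\sigma)_{\mathrm{ab}}\to(\overline G_\sigma)_{\mathrm{ab}}$ is an isomorphism.

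In $\overline S$ the factors are abelian, so $\mathrm{Ad}(\bar v)=\prod_{i\neq j}C_i^{\bar v}$ genuinely holds. Your Steps 1, 2 and 4 then apply verbatim to show that $(\overline G_\sigma)_{\mathrm{ab}}\to\overline G_{\mathrm{ab}}$ is injective. This map, precomposed with the isomorphism, equals $(G_\sigma)_{\mathrm{ab}}\to G_{\mathrm{ab}}\to\overline G_{\mathrm{ab}}$. Hence $\pi$ is injective, and $G_{\mathrm{ab}}$ never has to be computed.

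Inserting this commutative square turns your argument into a proof. The stabilizer description in your Step 3 is quoted from McCullough--Miller and Griffin in the paper as well, so relying on it is no worse than what the paper does.
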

\begin{proof} Let $\overline{S}=(G_1)_\mathrm{ab}\ast\dots\ast(G_n)_\mathrm{ab}$, observe that the kernel of the natural map $S\to\overline{S}$ is preserved by $G$ so there is a map $G=\mathrm{OFR}(S)\to\overline{G}:=\mathrm{OFR}(\overline{S})$.
It is a consequence of the construction of the Whitehead poset $\Wh_n^0$ that if $G_\sigma=\prod_{j=1}^k G_{i_j}$, then $\overline{G}_\sigma=\prod_{j=1}^k (G_{i_j})_\mathrm{ab}$. Moreover, using the presentation of $\overline{G}$ given by Theorem \ref{teo:presFR} we see that the map $(\overline{G}_\sigma)_\mathrm{ab}\buildrel{\overline{\pi}}\over\rightarrow \overline{G}_\mathrm{ab}$ is injective. We have a commutative diagram

$$\begin{tikzcd}
&(G_\sigma)_\mathrm{ab}\arrow[r,"\pi"]\arrow[d,"1_d"]&G_\mathrm{ab}\arrow[d]\\
&(\overline{G}_\sigma)_\mathrm{ab}\arrow[r,"\overline{\pi}"]&\overline{G}_\mathrm{ab}\\
\end{tikzcd}$$
    which implies the result.
\end{proof}

As a consequence, using Theorem \ref{teo:A} we get:

\begin{cor}\label{cor:FR}
Let $S=G_1\ast\dots\ast G_n$ be a free product of $n\geq 4$  groups of type $\mathrm{FP}_\infty$ such that $\Sigma^n(G_i,\mathbb{Z})\subseteq S(G_i)$ is dense for all $i=1,\dots,n$. Then

Then $\Sigma^{n-3}(\mathrm{OFR}(S))$ is dense in $S(G)$, and $\Sigma^{n-2}(\mathrm{OFR}(S))\cap-\Sigma^{n-2}(\mathrm{OFR}(S))=\emptyset$.
\end{cor}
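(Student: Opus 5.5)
The plan is to apply Theorem \ref{teo:A} to the action of $G=\mathrm{OFR}(S)$ on the McCullough--Miller complex $\MM_S$. As explained above, this action satisfies Hypothesis (*) with strong fundamental domain $W=|\Wh_n|$ and singular part $W_\Fsing=|\Wh_n^0|$. The stabilizers are conjugates of products $\prod_{j=1}^k G_{i_j}$, which are of type $\FP_\infty$. I will read both $\Sigma$'s in the statement as the homological invariants $\Sigma^\bullet(\mathrm{OFR}(S),\Z)$, since that is what Theorem \ref{teo:A} controls. There are two things to do: check the density hypothesis on the sets $\Omega_\sigma$ (in degrees $k=n-3$ and $k=n-2$), and determine exactly how acyclic $|\Wh_n^0|$ is.

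\emph{Density of $\Omega_\sigma$.} Fix $\sigma\in|\Wh_n^0|$ and $H=G_\sigma=\prod_{j} G_{i_j}$. By Lemma \ref{lem:stabFR}, the map $H_\mathrm{ab}\to G_\mathrm{ab}$ is injective. Hence, after tensoring with $\mathbb{R}$ and dualizing, the restriction $\mathrm{Hom}(G,\mathbb{R})\to\mathrm{Hom}(H,\mathbb{R})$ is a surjective linear map.
\begin{itemize}
\item[(a)] The restriction $S(G)\setminus\mathrm{Ann}(H)\to S(H)$ is therefore a continuous open surjection.
\item[(b)] The preimage of a dense open subset of $S(H)$ under such a map is dense in $S(G)\setminus\mathrm{Ann}(H)$. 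Since $\mathrm{Ann}(H)$ is the trace of a proper linear subspace, that preimage is dense in $S(G)$.
\end{itemize}
This is the argument of Lemma \ref{lem:densitycond}(iii), but it only needs injectivity on abelianizations rather than a finite kernel. So it suffices to show that $\Sigma^k(H,\Z)$ is dense in $S(H)$ for $k\le n$.

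For this I would use the product formula for homological Sigma invariants of direct products of $\FP_\infty$ groups (Meinert/Bieri--Geoghegan): the complement $\Sigma^k(A\times B,\Z)^c$ is contained in the union of joins $\Sigma^p(A,\Z)^c\ast\Sigma^q(B,\Z)^c$ with $p+q=k$. Let $\chi$ be a character whose restriction to every factor $G_{i_j}$ is nonzero and lies in $\Sigma^n(G_{i_j},\Z)$; note $\Sigma^n\subseteq\Sigma^k$. Such a $\chi$ avoids every such join, and these characters form a dense open set because each $\Sigma^n(G_i,\Z)$ is dense and open.

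\emph{Connectivity of $W_\Fsing$.} By Proposition \ref{Whitehead conectivity}, $|\Wh_n^0|$ is $(n-4)$-connected, hence $(n-4)$-acyclic. Theorem \ref{teo:A} (v)$\Rightarrow$(i) with $k=n-3$ then gives that $\Sigma^{n-3}(\mathrm{OFR}(S),\Z)$ is dense.

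For the second claim I need $|\Wh_n^0|$ not to be $(n-3)$-acyclic, i.e.\ $\overline H_{n-3}(|\Wh_n^0|)\neq 0$. Once that holds, the equivalence (iii)$\Leftrightarrow$(v) of Theorem \ref{teo:A} with $k=n-2$ gives $\Sigma^{n-2}\cap-\Sigma^{n-2}=\emptyset$.

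The nonvanishing is the step I expect to be the main obstacle. I would argue as follows.
\begin{itemize}
\item[(a)] The poset $\Wh_n^0$ has rank $n-3$: maximal chains run from the trees with $n-2$ unlabeled vertices down to the trees with two unlabeled vertices.
\item[(b)] The Cohen--Macaulay result of \cite{BMcCMM} then makes $|\Wh_n^0|$ homotopy equivalent to a wedge of $(n-3)$-spheres.
\item[(c)] The number of spheres equals $\pm$ the reduced Euler characteristic, i.e.\ the Möbius invariant of $\Wh_n$ with the nuclear element as bottom and a top adjoined. \cite{BMcCMM} compute this to be nonzero; it is related to $(n-1)^{n-2}$.
\end{itemize}
If I cannot quote the count directly, I would instead exhibit an explicit nonzero $(n-3)$-cycle. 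For small $n$ I would check by hand: for $n=4$, $|\Wh_4^0|$ should be a disconnected finite graph-like complex, so $\overline H_0\neq0$. I would then try an induction using links in the poset.
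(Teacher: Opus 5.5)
Your route is the paper's: apply Theorem \ref{teo:A} to $\mathrm{OFR}(S)\curvearrowright\MM_S$ with $W_\Fsing=|\Wh_n^0|$, get density of the sets $\Omega_\sigma$ from Lemma \ref{lem:stabFR}, and get the connectivity from Proposition \ref{Whitehead conectivity}. Your density step is more careful than the paper's appeal to Lemma \ref{lem:densitycond}(iii).
\begin{itemize}
\item[(1)] The open-map argument uses exactly the injectivity $(G_\sigma)_{\mathrm{ab}}\to G_{\mathrm{ab}}$ that Lemma \ref{lem:stabFR} provides.
\item[(2)] The product inequality gives density of $\Sigma^k(G_\sigma,\Z)$ for the product stabilizers, which the paper leaves implicit.
\end{itemize}
You are also right that the emptiness claim needs $\overline H_{n-3}(|\Wh_n^0|)\neq 0$. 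Proposition \ref{Whitehead conectivity} does not give this on its own, and the paper's one-line proof does not address it.

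However, your justification of that step has an off-by-one error, and it matters.
\begin{itemize}
\item[(1)] Take a bipartite labeled tree with $m$ unlabeled vertices of valences $d_1,\dots,d_m\ge 2$. Then $n+m-1=\sum d_i\ge 2m$, so $m\le n-1$.
\item[(2)] So the maximal elements of $\Wh_n$ are the trees with $n-1$ unlabeled vertices, all bivalent (subdivided labeled trees on $[n]$), not $n-2$.
\item[(3)] Each folding lowers $m$ by one, so $|\Wh_n^0|$ consists of the elements with $2\le m\le n-1$ and is $(n-3)$-dimensional.
\item[(4)] With your range $2\le m\le n-2$ the complex would be $(n-4)$-dimensional. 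By Proposition \ref{Whitehead conectivity} it would then be contractible, which gives the opposite of what you want.
\end{itemize}
For the same reason your $n=4$ sanity check is wrong. $|\Wh_4^0|$ must be connected, since Proposition \ref{Whitehead conectivity} says it is $0$-connected. It is a graph with $16$ vertices with $m=3$ and $12$ with $m=2$, and $36$ edges (each $m=2$ tree has three covers), so $\overline H_1\cong\Z^9$. The disconnected picture (three points, $\overline H_0\cong\Z^2$) is the case $n=3$.

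With the grading corrected, your steps (b) and (c) go through. A complex that is $(n-3)$-dimensional and $(n-4)$-connected is a wedge of $(n-3)$-spheres. The number of spheres is $(n-1)^{n-2}\neq 0$, which checks against $2$ for $n=3$ and $9$ for $n=4$. This count is McCammond and Meier's computation of the M\"obius number of the hypertree poset, which is isomorphic to $\Wh_n$. That is the result to quote, rather than \cite{BMcCMM}, which supplies only the Cohen--Macaulay property.
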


\begin{proof} The fact that we can apply Theorem \ref{teo:A} follows from the previous discussion: the complex $\MM_S$ is contractible and has strong fundamental domain $\Wh_n$ and for each cell $\sigma\in\Wh_n$ the set $\Omega_\sigma$ is dense because of Lemma \ref{lem:densitycond} (iii) and Lemma \ref{lem:stabFR}. 
Using Theorem \ref{teo:A} the result follows by Proposition \ref{Whitehead conectivity} 
\end{proof}




\section{Symmetric automorphisms of right angled Artin groups}\label{sec:MM}

\subsection{The \texorpdfstring{$\G$}-McCullough-Miller complex} The construction of the McCullough-Miller space for pure symmetric automorphisms of free products has been recently generalized by the first and third named authors to the case of pure symmetric automorphisms of right-angled Artin groups. In this section, we will briefly recall this generalization to show that this action also satisfies Hypothesis (*) and that Theorem \ref{teo:A} applies.

\begin{defi}[\textup{RAAG}]
    Let $\G$ be a finite simplicial  graph (i.e., with no double edges and no loops). The right-angled Artin group (\textup{RAAG}) defined by $\G$ is the group with presentation
    $$
A_{\Gamma}=\langle v \in \Gamma \mid[v, w]=1 \textrm{ whenever } \{v,w\} \in E(\Gamma)\rangle
$$
\end{defi}

Here and throughout the paper we will abuse notation and often identify a graph with its set of vertices. We will also use standard graph notation, for example, for a vertex $v\in\G$, $\lk(v)$ is the subgraph spanned by the vertices in $\G$ linked to $v$ and $\st(v)=\lk(v)\cup\{v\}$. All subgraphs $\Delta\subseteq\G$ are supposed to be \emph{induced}, meaning that any two vertices of $\Delta$ linked by an edge of $\Gamma$ are also linked in $\Delta$.

\begin{defi} Let $A_\Gamma$ be a \textup{RAAG}. The group of pure symmetric automorphisms $\PAut(A_\G)$ of $A_\G$ is the group consisting of those automorphisms of $A_\G$ that map each standard generator of $A_\G$ (i.e., each vertex of $\G$) to a conjugate of itself. Obviously, it contains the inner automorphisms and the corresponding external version is denoted $\POut(A_\G)=\PAut(A_\G)/\Inn(A_\G).$
\end{defi}

In the particular case when $\G$ is a disjoint union of complete graphs $\G=\G_1\sqcup\ldots\sqcup\G_k$, it is easy to check that $\POut(A_\G)=\mathrm{OFR}(A_{\G_1}\ast\ldots\ast A_{\G_k})$ so we can apply Corollary
\ref{cor:FR}. However, this is no longer true if the graphs $\G_1,\ldots,\G_k$ are not complete. Next, we introduce an important family of elements of $\PAut(A_\G)$.

\begin{defi}[Partial conjugations]
Given $v \in \G$ and a union $S$  of connected components of $\Gamma-\st(v)$, the partial conjugation of $S$ by $v$ is the automorphism $C_{S}^v$  given by
$$
\left\{\begin{array}{l}
C_{S}^v(w)=vwv^{-1}, \text{ if } w \in S \\
C_{S}^v(z)=z, \text{ if } z \in\Gamma-S.
\end{array}\right.
$$
\end{defi}

\begin{teo}[Laurence \cite{Lau}] The set of partial conjugations is a generating system for $\PAut(A_\G)$.   \label{teo:partialGen}
\end{teo}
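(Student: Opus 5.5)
We plan to argue by induction on a length function, in the spirit of Whitehead's peak-reduction argument for free groups. Let $\phi\in\PAut(\A)$. For each vertex $v\in\G$ write $\phi(v)=g_v v g_v^{-1}$, where we choose $g_v\in\A$ of minimal word length among all such conjugators. Such a $g_v$ is determined up to right multiplication by the centralizer $C(v)=A_{\st(v)}$. Minimality means that no reduced expression of $g_v$ ends with a letter of $\st(v)^{\pm1}$. We set $\ell(\phi)=\sum_{v\in\G}|g_v|$.

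If $\ell(\phi)=0$ then $\phi$ is the identity, so the claim reduces to the following. \emph{If $\ell(\phi)>0$, there is a partial conjugation $C=C_S^{w}$ (or its inverse) with $\ell(C\circ\phi)<\ell(\phi)$, or more generally $\ell(C^{\pm1}\phi)<\ell(\phi)$ after possibly composing with an inner automorphism.} Inner automorphisms are themselves products of partial conjugations, namely $C^w_{\G-\st(w)}$ ranging over all $w$, so composing with them is harmless.

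The key steps are as follows.
\begin{enumerate}[label=(\arabic*)]
\item Choose $v$ with $|g_v|$ maximal and let $w^{\epsilon}$ be the first letter of a reduced word for $g_v$. After composing with the inner automorphism by $w^{-\epsilon}$ if necessary, we may try to ``peel off'' $w^\epsilon$ from those $g_u$ that begin with $w^\epsilon$.
\item Let $S$ be the set of vertices $u\notin\st(w)$ whose conjugator $g_u$ can be written starting with $w^{\epsilon}$. We will show, using that $\phi$ is an automorphism, that $S$ is a union of connected components of $\G-\st(w)$. Suppose $u_1,u_2\notin\st(w)$ are adjacent. Then $\phi(u_1)$ and $\phi(u_2)$ commute, and a normal-form (centralizer) argument in RAAGs shows that $g_{u_1}$ and $g_{u_2}$ can be chosen with a common prefix, which is long enough to force $w^\epsilon$ into both or neither.
\item Apply $(C^{w}_{S})^{-\epsilon}$. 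This strictly shortens $g_u$ for $u\in S$ and does not lengthen $g_u$ for $u\notin S$: for $u\in\st(w)$, $w$ commutes with $u$, and for the other components the conjugators are unchanged. Hence $\ell$ drops, and induction finishes.
\end{enumerate}

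The main obstacle is step (2): the combinatorics of normal forms in $\A$ showing that the ``begins with $w^\epsilon$'' relation is constant on components of $\G-\st(w)$. For this we would first use the description of commuting elements in RAAGs, that is, centralizers of cyclically reduced elements generated by their pure factors and the star of their support, together with the injectivity of $\phi$. Injectivity rules out the degenerate situation where conjugators on two adjacent vertices diverge before the letter $w$.

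If this direct approach becomes too messy, the fallback is Day's peak-reduction theorem for $\Aut(\A)$ with respect to the tuple of conjugacy classes of the generators. Since $\phi$ fixes every such conjugacy class, a peak-reduced factorization into Whitehead automorphisms must consist of factors that preserve all cyclic lengths. One then checks that such Whitehead automorphisms are products of partial conjugations, after discarding graph symmetries and inversions, which must cancel because $\phi$ is pure.
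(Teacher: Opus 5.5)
The paper does not prove this statement; it quotes it from Laurence. Judged on its own, your direct argument has a genuine gap. The gap is in Step (3), not in Step (2).

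\textbf{Step (3) is false.} Left composition with a partial conjugation does not delete a common initial letter from the conjugators. Write $C=C^w_S$. Then
$C^{-\epsilon}\circ\phi(u)=C^{-\epsilon}(g_u)\,C^{-\epsilon}(u)\,C^{-\epsilon}(g_u)^{-1}$.
Two things follow.
\begin{itemize}
\item For $u\in S$ the new conjugator is $C^{-\epsilon}(g_u)\,w^{-\epsilon}$: a letter is appended at the right end, and nothing cancels at the left.
\item For every $u$, including $u\in\st(w)$ and $u$ in other components, each letter of $g_u$ lying in $S$ is replaced by its conjugate by $w^{-\epsilon}$.
\end{itemize}
Only inner automorphisms act on all conjugators by left multiplication. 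Composing on the right is no better, since there the whole word $\phi(w)^{\pm1}$ is what gets conjugated in.

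\textbf{A concrete counterexample.} Let $\G$ have five isolated vertices $a,b,c,d,e$, and let $\phi=C^a_{\{b\}}\circ C^c_{\{b\}}\circ C^b_{\{c\}}$. Then $\phi$ fixes $a,d,e$, and
$\phi(b)=(ca)b(ca)^{-1}$, $\phi(c)=(caba^{-1})c(caba^{-1})^{-1}$,
so $\ell(\phi)=6$.
\begin{itemize}
\item Your recipe gives $v=c$, $w^\epsilon=c$ and $S=\{b\}$. This $S$ is indeed a component of $\G-\st(c)$, so Step (2) is fine here.
\item The map $(C^c_{\{b\}})^{\mp1}\circ\phi$ has conjugators $cac^{\mp1}$ for $b$ and $cac^{\mp1}bc^{\pm1}a^{-1}$ for $c$. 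So $\ell$ rises to $9$ for either sign.
\item The preliminary inner automorphism by $c^{\mp1}$ raises $\ell$ to $7$ or $11$.
\item Composing on the right with $(C^c_{\{b\}})^{\pm1}$ gives $\ell=10$.
\end{itemize}
A reducing move does exist: $(C^a_{\{b\}})^{-1}\circ\phi=C^c_{\{b\}}\circ C^b_{\{c\}}$ has $\ell=3$. But it is based at $a$, which is not an initial letter of any conjugator. It works because $g_b$ ends in $a$ and $b$ occurs inside $g_c$ as $aba^{-1}$.

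\textbf{What is missing.} You need a lemma saying that if $\ell(\phi)>0$ there is a pair $(w,S)$, with $S$ a union of components of $\G-\st(w)$, such that $(C^w_S)^{\pm1}$ lowers $\ell$. That lemma is the substance of the theorem, and your proposal does not supply it. Proving it requires controlling how letters of $S$ sit \emph{inside} all the conjugators, not just which conjugators share an initial letter.

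\textbf{The fallback is circular.} Day's peak-reduction theorem is proved by lowering peaks in a factorization into Whitehead automorphisms that is assumed to exist. That is, it takes as input Laurence's theorem that Whitehead automorphisms generate $\Aut(\A)$. The statement you are proving is a central step in Laurence's proof of that theorem. In the free-group case, Nielsen's theorem supplies the generation independently, and the McCool-style argument is standard. Unless you have such an independent proof that Whitehead automorphisms generate $\Aut(\A)$, the fallback presupposes the result.
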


Given a vertex $v\in\G$ we will refer to the connected components of $\G-\st(v)$ by saying simply \emph{components} of $\G-\st(v)$. If $w\in\G$ is another vertex not linked to $v$, the components of $\G-\st(v)$ can be classified with respect to $w$ as follows.

\begin{defi}[Graph components: shared, dominant and subordinate]
     Let $u, v\in \Gamma$ be not linked.  The unique component of $\Gamma-\st(v)$ that contains $w$ is called the \emph{dominant} component of $\G-\st(v)$ respect to $w$. A component of $\Gamma-\st(v)$ that is also a component of $\Gamma-\st(w)$ is said to be \emph{shared}, if this happens then we say that $v$, $w$ form a \emph{SIL-pair}. All the other components are called \emph{subordinate}. 
     \label{defi:comp}
\end{defi}
One can show that subordinate components of $\G-\st(v)$ are subsets of the dominant component of $\G-\st(w)$.
The notion of SIL pair appeared already in \cite{ChRSV} and the above classification of components goes back to \cite{GPR} and has been extensively used, see for example \cite{DayWade}. 
Using this terminology, it is possible to rewrite the presentation of Koban and Piggott (\cite{KoPi}) as follows:
\begin{teo}\label{teo:conmu}
    The group $\PAut(A_{\Gamma})$ is the group generated by all partial conjugations $C_{A}^v$ for $v\in \G$ and $A$ a component of $\G-\st(v)$, subject to the following relations:
 \begin{itemize}
     \item[(i)] $\left[C_{A}^v, C_{B}^w\right]=1$ if  $v \in \st(w)$,
     \item[(ii)]$\left[C_{A}^v, C_{B}^w\right]=1$ if $v \notin \st(w)$ and either $A$ and $B$ are shared but distinct or $A$ or $B$ are subordinate,
     \item[(iii)] $\left[C_{A}^v C_{B}^w, C_{A}^w\right]=1$ if $v \notin \st(w)$, $A$ is shared and $B$ dominant. \label{teo:presentation}
 \end{itemize}
\end{teo}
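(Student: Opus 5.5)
The plan is to obtain the statement as a direct translation of the presentation of Koban and Piggott \cite{KoPi}. No new presentation is derived: Laurence's Theorem \ref{teo:partialGen} together with \cite{KoPi} already gives $\PAut(A_\G)$ as generated by the partial conjugations $C_K^v$, with $K$ a single component of $\G-\st(v)$, subject to three families of relations. The first family says $C_K^v$ and $C_L^w$ commute when $v\in\st(w)$; this includes $v=w$, since $v\in\st(v)$. The second family is a disjointness type commutation: $C_K^v$ and $C_L^w$ commute when $v\notin L$ and $w\notin K$, or when $K$ and $L$ are disjoint. The third family is a ``twisted'' relation $[C_K^vC_L^w,C_K^w]=1$ for a component $K$ common to $\G-\st(v)$ and $\G-\st(w)$, with $L$ the component of $\G-\st(w)$ containing $v$. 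The work is to rewrite the hypotheses of each family in terms of Definition \ref{defi:comp}.

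The first step is a small combinatorial dictionary for two non-adjacent vertices $v,w$ and components $A$ of $\G-\st(v)$ and $B$ of $\G-\st(w)$:
\begin{itemize}
\item[(a)] $w\in A$ if and only if $A$ is dominant;
\item[(b)] a non-dominant $A$ is either shared or subordinate, and if subordinate it is contained in the dominant component of $\G-\st(w)$;
\item[(c)] two distinct shared components are disjoint;
\item[(d)] a shared $A$ is disjoint from every component of $\G-\st(w)$ other than itself.
\end{itemize}
For (b), note that if $A\cap\st(w)=\emptyset$ then $A$ is connected in $\G-\st(w)$, so it lies in a single component $B'$ of $\G-\st(w)$. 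Either $A=B'$ (shared), or $B'$ meets $\st(v)$, which forces $B'$ to be the component containing $v$, i.e.\ the dominant one. If instead $A$ meets $\lk(w)$, then $A$ is subordinate, and connectivity through $\lk(w)\setminus\st(v)$ should again place $A$ in the dominant component. I expect this to be the delicate point of the dictionary.

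With the dictionary in hand, the relations are matched as follows.
\begin{itemize}
\item Relation (i) is literally the first family.
\item For (ii) with $A,B$ shared and distinct, we have $v\notin B$ and $w\notin A$ by (a), and $A\cap B=\emptyset$ by (c); this is an instance of the second family.
\item For (ii) with $A$ subordinate: if $B$ is not dominant, then again $v\notin B$ and $w\notin A$. If $B$ is dominant, then $A\subseteq B$ by (b). Here I would check directly which clause of the Koban--Piggott list covers the pair. I would first try to show that $B\setminus A$ is a union of components of $\G-\st(v)$, so that $C_B^w$ differs from a product of partial conjugations with supports disjoint from $A$ only by an inner factor. Failing that, I would verify that the corresponding commutator already appears among the relations of \cite{KoPi} in their notation.
\item Relation (iii) is exactly the third family: ``$A$ shared, $B$ dominant'' is the translation of ``$K$ common to both and $L$ the component containing $v$''.
\end{itemize}

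Conversely, every Koban--Piggott relation must fall into one of (i)--(iii). Using (a)--(d), a pair of generators with $v\notin\st(w)$ that commute in \cite{KoPi} has neither component dominant, or one subordinate; otherwise it is the twisted pair.

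The main obstacle is this bookkeeping in the mixed case of a subordinate component paired with a dominant one. Naively $C_A^v$ and $C_B^w$ with $A\subseteq B$ do not obviously commute elementwise, so one must verify carefully, using (b), how the supports sit relative to $\st(v)$ and $\st(w)$. It is also where one must check that the conventions of \cite{KoPi} for ``component'' versus ``union of components'' agree with the ones used here.
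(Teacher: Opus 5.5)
Your strategy, reading the statement off the Koban--Piggott (Toinet) presentation through the shared/dominant/subordinate dictionary, is also the paper's: it gives no argument beyond calling the theorem a rewriting of \cite{KoPi}. The proposal breaks at exactly the step you call a literal match. The relation you attribute to \cite{KoPi}, $[C^v_KC^w_L,C^w_K]=1$ with $K$ shared and $L$ the component of $\G-\st(w)$ containing $v$, is false in $\PAut(A_\G)$. Let $\G$ be three isolated vertices $x_1,x_2,x_3$, and take $v=x_2$, $w=x_3$, $A=\{x_1\}$ (shared), $B=\{x_2\}$ (dominant), $\phi=C^{x_2}_{\{x_1\}}C^{x_3}_{\{x_2\}}$, $\psi=C^{x_3}_{\{x_1\}}$. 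Then $\phi\psi(x_1)=x_3x_2x_1x_2^{-1}x_3^{-1}\neq x_2x_3x_1x_3^{-1}x_2^{-1}=\psi\phi(x_1)$, while $\phi\psi$ and $\psi\phi$ agree on $x_2$ and $x_3$. An inner automorphism of $F_3$ fixing $x_3$ and $x_3x_2x_3^{-1}$ is trivial, so the commutator survives even in $\POut(F_3)$; the opposite composition order fails too. The true relation generalizes McCool's $[C^{x_j}_{\{x_i\}}C^{x_j}_{\{x_k\}},C^{x_k}_{\{x_i\}}]=1$ and conjugates by $v$ in both factors: $[C^v_AC^v_B,C^w_A]=1$, with $B$ the component of $\G-\st(v)$ containing $w$. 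It holds because $C^v_AC^v_B=C^v_{A\cup B}$, as a direct check on vertices shows. So item (iii) as printed contains a misprint ($C^w_B$ should be $C^v_B$, with $B$ dominant in $\G-\st(v)$). An argument that ``matches'' (iii) against a remembered form of the source only reproduces the misprint. The translation has to be verified relation by relation, and doing so forces this correction.

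Your paraphrase of the second family is also wrong as an ``or''. The choice $K=L=\{x_1\}$, $v=x_2$, $w=x_3$ satisfies $v\notin L$ and $w\notin K$, yet $C^{x_2}_{\{x_1\}}$ and $C^{x_3}_{\{x_1\}}$ do not commute. For two isolated vertices $x_1,x_2$, the dominant components $\{x_2\}$ and $\{x_1\}$ are disjoint, but $C^{x_1}_{\{x_2\}}$ and $C^{x_2}_{\{x_1\}}$ are the inner automorphisms by $x_1$ and $x_2$, which do not commute. What (ii) needs is two conditions:
\begin{itemize}
\item disjoint supports ($K\cap L=\emptyset$, $v\notin L$, $w\notin K$), which covers distinct shared components and a subordinate component against a non-dominant one;
\item the nested case ($A\subseteq B$, $v\in B$, $w\notin A$), for a subordinate $A$ against the dominant $B$.
\end{itemize}
The nested case, which you flag as the main obstacle, is immediate. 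Both $C^v_AC^w_B$ and $C^w_BC^v_A$ send $x\in A$ to $wvxv^{-1}w^{-1}$, send $x\in B\setminus A$ to $wxw^{-1}$, and fix every other vertex. Your planned route via ``$B\setminus A$ is a union of components of $\G-\st(v)$'' cannot work, because $v\in B\setminus A$. Finally, the ``delicate point'' in (b) never arises. A component of $\G-\st(v)$ meeting $\lk(w)$ contains $w$ (since $w\notin\st(v)$), hence is dominant. So every non-dominant component lies in $\G-\st(w)$, and your first argument already places a subordinate one inside the dominant component of $\G-\st(w)$.
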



\begin{defi}[Based partitions, petals, length and automorphisms carried by a based partition]
Let $u\in \G$. A \emph{$\G$-partition based at $u$} or simply a \emph{based partition} is a partition of of the set $\G-\lk(u)$ of the form $\tau_u=\{\{u\},P_1,\ldots,P_k\}$  so that each $P_i$ is a union of components of $\G-\st(u)$. The sets $P_i$ are called \emph{petals} of $\tau_u$.   The number of petals is the 
    \emph{length} of the based partition, and we denote it by $l(\tau_u)$. The \emph{trivial} based partition at $u$ is $\{\{u\},\G-\st(u)\}$, it is the only partition based at $u$ of length 1.   \label{defi:partitions}
\end{defi}


\begin{defi}[Crossings and compatible partitions]
We say that two based partitions $\tau_u$ and $\tau_v$ \emph{cross} if $u\notin \st(v)$ and there are petals $P$ of $\tau_u$ and $Q$ of $\tau_v$ such that  $v\not\in P$, $u\not\in Q$, and $P\cap Q\neq\emptyset$, i.e., $P$ and $Q$ do not contain the corresponding dominant components and there is at least one shared component $\C$ such that $\C\subseteq P\cap Q$. 
 We will say that two based partitions $\tau_u, \tau_v$  are \emph{compatible} if either $u$ and $v$ are linked in $\G$ or they are not linked but $\tau_u, \tau_v$ do not cross.
\end{defi}

An important consequence of this definition is that if the vertices $u,v$ do not from a SIL pair, then any pair of partitions $\tau_u$, $\tau_v$ based at $u,v$ are always compatible.


The construction of the McCullough-Miller space has as main ingredient the Whitehead poset whose elements are the labeled bipartite trees of Definition \ref{def:bipartite}. In the case of the generalized McCullough-Miller space we will also use a generalized Whitehead poset, in this case the elements of the poset are called vertex types.

\begin{defi}[Vertex type]\label{def:vertextype}
    We define a vertex type $\tau=\{\tau_{a}\}_{a\in\G}$ as a collection of pairwise compatible based partitions, one for each vertex in $\G$. We say that two vertex types $\tau$ and $\tau'$ are \emph{compatible} if their based partitions are pairwise compatible and that a collection of vertex types is \emph{compatible} if they are pairwise compatible. When a vertex type has only a non trivial based partition based at, say $a\in\G$, we will often use the notation $\tau_a$ for both the vertex type and the based partition.
\end{defi}

 The \emph{rank} of the vertex type $\tau$ is the sum of the lengths minus one, i.e.
   $$\mathrm{rk}(\tau)=\displaystyle\sum_{a\in\G}(l(\tau_a)-1)$$


\begin{defi}[$\Gamma$-Whitehead poset]\label{def:order}  
    Let $v\in\Gamma$ and $\tau_v$, $\tau'_v$ two partitions based at $v$.  We say that $\tau_v\leq \tau_v'$ if every petal of $\tau_v'$ is contained in a petal of $\tau_v$, i.e., when every petal of $\tau_v$ is a union of petals of $\tau_v'$. In particular, if $\tau_v\leq \tau_v'$, then $l(\tau_v)\leq l(\tau'_v)$.  For vertex types, we will say that $\tau\leq \tau'$ if $\tau_v\leq \tau'_v,\ \textrm{ for any } v\in \Gamma$.
    The $\Gamma$-Whitehead poset $\Wh_{\G}$ is the poset formed by all the vertex types associated to $\G$ under this partial order. It has a minimal element called the \emph{nuclear vertex type} $\OO$, whose based partitions are all trivial. 
\end{defi}

The order relation implies compatibility, i.e., if $\tau$ and $\tau'$ are vertex types with $\tau<\tau'$, then $\tau$ and $\tau'$ are compatible (\cite[Lemma 4.5.]{ArMar}). Moreover, compatibility implies the existence of upper bounds:

\begin{lem}\cite[Lemma 5.6]{ArMar}\label{lem:lub} Compatible vertex types $\tau,\tau'$  have a least upper bound $\tau\vee\tau'$ which is formed as follows: for each $a\in\Gamma$, consider the based partitions $\tau_a=\{\{\underline{a}\},P_1,\ldots,P_t\}$, $\tau_a'=\{\{\underline{a}\},Q_1,\ldots,Q_s\}$. Then the based partition $(\tau\vee\tau')_a$ has as petals all the non empty intersections $P_i\cap Q_j$. 
\end{lem}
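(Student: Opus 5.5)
We need to prove that if $\tau$ and $\tau'$ are compatible, then the collection $\sigma=\{\sigma_a\}_{a\in\G}$, where $\sigma_a$ has as petals all non-empty intersections $P_i\cap Q_j$, is a vertex type, and that it is the least upper bound of $\tau$ and $\tau'$. We argue in three steps.

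\textbf{Step 1: each $\sigma_a$ is a based partition at $a$.} The non-empty sets $P_i\cap Q_j$ partition $\G-\st(a)$, since both families do. Each $P_i$ and each $Q_j$ is a union of components of $\G-\st(a)$, and components are disjoint. Hence $P_i\cap Q_j$ is exactly the union of the components lying in both, so it is again a union of components.

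\textbf{Step 2: the bound properties, which are formal.} Every petal of $\sigma_a$ is contained in some $P_i$ and in some $Q_j$, so $\tau_a\le\sigma_a$ and $\tau'_a\le\sigma_a$. Now let $\rho$ be a vertex type with $\tau\le\rho$ and $\tau'\le\rho$, and let $R$ be a petal of $\rho_a$. Then $R$ lies in some $P_i$ and in some $Q_j$, so $R\subseteq P_i\cap Q_j$, which is a petal of $\sigma_a$. Hence $\sigma\le\rho$. It follows that once $\sigma$ is shown to be a vertex type, it is the least upper bound.

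\textbf{Step 3: pairwise compatibility of the partitions $\sigma_a,\sigma_b$.} This is the main obstacle. If $a\in\st(b)$ there is nothing to prove, so assume $a,b$ are not linked. Suppose for contradiction that $\sigma_a$ and $\sigma_b$ cross. Then there are petals $P_i\cap Q_j$ of $\sigma_a$ with $b\notin P_i\cap Q_j$, and $P'_k\cap Q'_l$ of $\sigma_b$ with $a\notin P'_k\cap Q'_l$, together with a shared component $\C\subseteq (P_i\cap Q_j)\cap(P'_k\cap Q'_l)$. Here $P'_k$ and $Q'_l$ denote the petals of $\tau_b$ and $\tau'_b$.

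The key observation concerns the dominant component: the petal containing it contains $b$, so $b\in P_i\cap Q_j$ exactly when both $P_i$ and $Q_j$ contain the dominant component $D$ of $\G-\st(a)$ with respect to $b$. Hence $b\notin P_i$ or $b\notin Q_j$; similarly $a\notin P'_k$ or $a\notin Q'_l$. We split into four cases.
\begin{itemize}
\item If $b\notin P_i$ and $a\notin P'_k$, then $\C\subseteq P_i\cap P'_k$, so $\tau_a$ and $\tau_b$ cross. This contradicts the fact that $\tau$ is a vertex type.
\item The case $b\notin Q_j$, $a\notin Q'_l$ contradicts $\tau'$ being a vertex type in the same way.
\item In the mixed cases, for example $b\notin P_i$ and $a\notin Q'_l$, we get $\C\subseteq P_i\cap Q'_l$. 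Then $\tau_a$ and $\tau'_b$ cross, which contradicts compatibility of $\tau$ and $\tau'$.
\end{itemize}

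The delicate point throughout is to track the dominant components carefully using Definition~\ref{defi:comp}. In particular, one must check that the dominant component is never contained in a shared component, which holds because $b\in D$ and $b\notin\G-\st(b)$. With that checked, Step 3 shows that $\sigma$ is a vertex type, and together with Step 2 this proves the lemma.
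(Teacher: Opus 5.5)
Your proof is correct and complete. The intersections $P_i\cap Q_j$ form based partitions, the least-upper-bound property is purely order-theoretic, and a crossing of $\sigma_a,\sigma_b$ would force a crossing between one of $\tau_a,\tau'_a$ and one of $\tau_b,\tau'_b$, which is excluded because $\tau,\tau'$ are vertex types and are compatible. This paper does not prove the lemma itself but quotes it from \cite[Lemma 5.6]{ArMar}, and your direct verification is the natural argument. The discussion of dominant components is unnecessary: $b\notin P_i\cap Q_j$ already gives $b\notin P_i$ or $b\notin Q_j$, and the crossing condition as defined is purely set-theoretic.
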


\begin{defi}[The complex $\MM_\G$]
    The McCullough-Miller complex $\MM_\G$ for a \textup{RAAG} $A_\Gamma$ is a CW-complex constructed by gluing copies of $|\Wh_\G|$, the geometric realization of the $\G$-Whitehead poset, for details on the gluing see Section 4 of \cite{ArMar}.
\end{defi}

By Theorems A and B of \cite{ArMar}, $\MM_\G$ is a contractible space where the group $\POut(\A)$ acts with the geometric realization $|\Wh_\G|$ as  strong fundamental domain. As a consequence, Hypothesis (*) holds for the action $\POut(\A)\curvearrowright \MM_\G$. Those theorems also imply that the stabilizers of the action are free abelian, and can be described rather explicitly as we shall do next.

Let $\tau$ be a vertex type, seen as a 0-cell in $\MM_\G$. By the Remark after Lemma 4.13 in \cite{ArMar},  the stablizer of $\tau$ under the action of $\POut(\A)$ is precisely the group consisting of the so called external automorphisms \emph{carried} by $\tau$. For each based partition $\tau_a$, the group of external automorphisms carried by $\tau_a$ is the group
\[\Stab(\tau_a)=\langle C^a_P\mid P\text{ petal of }\tau_a\rangle\]
and the group of external automorphisms carried by the vertex type $\tau$ is
\[\Stab(\tau)=\langle \Stab(\tau_a)\mid a\in\G\rangle.\]
The group $\Stab(\tau)$ is free abelian of rank $\mathrm{rk}(\tau)$ and the group $\Stab(\OO)$ is trivial. Crucially, these groups can be used to characterize when two vertex types are compatible:

\begin{lem}\cite[Lemma 5.4]{ArMar}\label{lem:comp}
     Two vertex types $\tau,\tau'$ are compatible if and only if the groups $\Stab(\tau)$ and $\Stab(\tau')$ commute.
\end{lem}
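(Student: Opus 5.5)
The plan is to reduce everything to commutation of generators and then split according to whether the base vertices are linked. Both $\Stab(\tau)$ and $\Stab(\tau')$ are generated by partial conjugations $C^u_P$, where $u\in\G$ and $P$ is a petal of $\tau_u$ (resp. $\tau'_u$). So the two groups commute if and only if every pair of such generators commutes in $\POut(\A)$. Each petal $P$ is a union of components $A$ of $\G-\st(u)$, and $C^u_P=\prod_{A\subseteq P}C^u_A$ with commuting factors by Theorem \ref{teo:conmu}(ii). Hence I can read off commutation of $C^u_P$ and $C^v_Q$ from the relations of Theorem \ref{teo:conmu}, passing to the quotient by $\Inn(\A)$. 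Throughout, compatibility is checked pair by pair: $\tau_u$ against $\tau'_v$.

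\emph{Compatible implies commuting.} If $u\in\st(v)$, relation (i) gives commutation directly. Suppose instead $u,v$ are not linked and $\tau_u,\tau'_v$ do not cross.
\begin{enumerate}
\item The key observation is that $\prod_{P}C^u_P$, over all petals of $\tau_u$, is conjugation by $u$, hence trivial in $\POut(\A)$.
\item So the generator $C^u_{P_0}$ for the petal $P_0$ containing the dominant component (the one containing $v$) equals, modulo $\Inn(\A)$, the inverse of the product of the $C^u_P$ over the remaining petals. The same holds for $\tau'_v$.
\item Therefore it suffices to treat petals $P\not\ni v$ and $Q\not\ni u$. Non-crossing says exactly $P\cap Q=\emptyset$ for these.
\item Every component in $P$ is then either subordinate, or shared but distinct from every component in $Q$. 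Relation (ii) gives $[C^u_A,C^v_B]=1$ for all components $A\subseteq P$, $B\subseteq Q$, so $[C^u_P,C^v_Q]=1$.
\end{enumerate}
I expect step 2, the reduction using that the product of all petals is inner, to be the main subtle point. It is exactly why the statement is about $\POut(\A)$ rather than $\PAut(\A)$: relation (iii) shows that petals containing the dominant component need not commute at the level of $\PAut(\A)$.

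\emph{Commuting implies compatible.} Suppose $\tau_u$ and $\tau'_v$ cross. Then there are petals $P\not\ni v$ and $Q\not\ni u$ and a shared component $\C\subseteq P\cap Q$; pick $c\in\C$.
\begin{enumerate}
\item Since $u\notin P\cup Q$ and $v\notin P\cup Q$, both $C^u_P$ and $C^v_Q$ fix $u$ and $v$. On $c$, $C^u_PC^v_Q(c)=uvcv^{-1}u^{-1}$, while the other order gives $vucu^{-1}v^{-1}$.
\item Hence the commutator $\varphi=[C^u_P,C^v_Q]$ fixes $u$ and $v$ and sends $c$ to $xcx^{-1}$, where $x=[u,v]^{\pm1}\neq1$.
\item If $\varphi$ were inner, say conjugation by $g$, then $g$ centralizes $u$ and $v$, and $g^{-1}x$ centralizes $c$.
\item Apply the retraction $\rho:\A\to A_{\{u,v,c\}}\cong F(u,v,c)$ killing all other vertices. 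This is a free group because $u,v,c$ are pairwise unlinked, as $c\notin\st(u)\cup\st(v)$.
\item Then $\rho(g)$ commutes with $u$ and $v$ in a free group, which forces $\rho(g)=1$. So $[u,v]^{\pm1}$ commutes with $c$ in $F(u,v,c)$, a contradiction.
\end{enumerate}
Thus crossing produces non-commuting generators, which proves the converse.
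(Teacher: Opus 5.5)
Your proof is correct. Note that the paper does not prove this lemma itself; it quotes it from \cite[Lemma 5.4]{ArMar}, so there is no in-paper argument to compare your route against.

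\emph{Forward direction.} The only inputs you need from Theorem~\ref{teo:conmu} are relations (i) and (ii), plus the observation that $\prod_P C^u_P$ over all petals of $\tau_u$ is the inner automorphism by $u$. This holds because conjugation by $u$ is trivial on $\st(u)$. That observation is exactly what lets you discard the petals containing the dominant components in $\POut(\A)$. Non-crossing then says precisely that the remaining petals $P\not\ni v$ and $Q\not\ni u$ are disjoint, so every pair of components they contain is covered by relation (ii).

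\emph{Converse.} The retraction $\A\to A_{\{u,v,c\}}\cong F(u,v,c)$ cleanly rules out that $[C^u_P,C^v_Q]$ is inner. As a bonus, it exhibits an explicit non-commuting pair of generators.

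There are a few small bookkeeping points.
\begin{itemize}
\item Partial conjugations based at the same vertex commute by relation (i), since $u\in\st(u)$, not by relation (ii). You use this both to write $C^u_P=\prod_{A\subseteq P}C^u_A$ and in the case $u=v$.
\item The exact word conjugating $c$ depends on your composition convention. Under either convention it is a commutator of $u$ and $v$, hence nontrivial and not in $\langle c\rangle$, which is all you use.
\item You may read the existence of a shared component $\C\subseteq P\cap Q$ off the parenthetical in the definition of crossing, but it also follows directly. Take $y\in P\cap Q$ and let $A$ be the component of $\G-\st(u)$ containing it. $A$ is not dominant, since $v\notin P$. If $A$ were subordinate, it would lie in the dominant component of $\G-\st(v)$, and that component contains $u$; this would force $u\in Q$, a contradiction. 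Hence $A$ is shared, and $\C=A\subseteq P\cap Q$.
\end{itemize}
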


 The following fact follows from the definition and will be useful below:

\begin{rem}\label{rem:stab} The vertex type $\tau$ can be recovered from the group $\Stab(\tau)$ and given two vertex types $\tau,\tau'$ such that $\Stab(\tau)\leq\Stab(\tau')$, we have $\tau\leq\tau'$. To see it note first that using the definition of $\Stab(\tau)$ we may reduce the problem to the case when $\tau$ has only one non trivial based partition at, say $a\in\G$. Then the petals of $\tau_a$ are determined by the partial conjugations in $\Stab(\tau)$.
\end{rem}

Moreover, the set of cell stabilizers for the action $\POut(\A)\curvearrowright \MM_\G$ is precisely the set of $\POut(\A)$-conjugated subgroups of the groups $\Stab(\tau)$ where $\tau$ runs over the vertex types. The groups $\Stab(\tau)$ are generated by partial conjugations and for any $a\in\G$ and $A=U\cup V\subseteq\G$ with $U,V$ union of components of $\G-\st(a)$ we have $C^a_A=C^a_UC^a_V$, so the generators of $\Stab(\tau)$ are products of the standard generators of $\POut(\A)$.
Therefore the presentation of Theorem \ref{teo:conmu} implies that the induced map 
\[\Stab(\tau)=(\Stab(\tau))_\mathrm{ab}\to(\POut(A_\G))_\mathrm{ab}\]
is injective. This together with  Lemma \ref{lem:densitycond} implies that the hypotheses of Theorem \ref{teo:A} hold true, in fact the three conditions (i), (ii) and (iii) of Lemma \ref{lem:densitycond} are satisfied. The subcomplex $|\Wh_\G|_\Fsing$ is precisely the simplicial realization of the following subposet.

\begin{defi}[$\Wh_\G^0$]
    We define $\Wh_\G^0$ as the poset that we obtain  when removing the nuclear vertex type $\OO$ in the Whitehead poset $\Wh_\G$.
\end{defi}

One more consequence of the presentation in Theorem \ref{teo:conmu} is that the Sigma invariants of the group $\POut(A_\G)$ are symmetric. Therefore
we get the following Theorem which is part of the statement of Theorems \ref{teo:families} and \ref{teo:POutRAAGs} in the introduction.

\begin{teo}\label{teo:firstPartPOutRAAGs}
    Let $G=\POut(\A)$ be the pure symmetric outer automorphism group of a \textup{RAAG} $\A$. Then, for any $k$, $\Sigma^k(G,\Z)$ is  dense in $S(G)$ if and only if  $|\Who|$ is $(k-1)$-acyclic and in other case it is empty.
\end{teo}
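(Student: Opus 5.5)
We apply Theorem \ref{teo:A} to the action $G=\POut(\A)\curvearrowright\MM_\G$ and then use the symmetry of the Sigma invariants to upgrade ``not dense'' to ``empty''.

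First we check the hypotheses. By Theorems A and B of \cite{ArMar}, $\MM_\G$ is contractible and $|\Wh_\G|$ is a strong fundamental domain. The fundamental domain is contractible because the poset $\Wh_\G$ has the minimum $\OO$, so its realization is a cone. The action is cocompact since $\Wh_\G$ is finite. It is admissible, as it is induced from an order preserving action on a poset realization. All stabilizers are free abelian of finite rank, hence of type $\FP_\infty$. So Hypothesis (*) holds. Moreover $|\Wh_\G|_\Fsing=|\Who|$, because $\Stab(\tau)$ is free abelian of rank $\mathrm{rk}(\tau)$. This rank is zero only for $\tau=\OO$: any nontrivial based partition has length at least $2$, so it contributes positively to the rank.

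Next we need density of each $\Omega_\sigma$. Every cell $\sigma$ of $|\Who|$ is a chain $\tau_0<\dots<\tau_r$ of nonnuclear vertex types. By admissibility, $G_\sigma=\bigcap_i\Stab(\tau_i)$. Since $\Stab(\tau_0)\le\Stab(\tau_i)$ for each $i$, we get $G_\sigma=\Stab(\tau_0)$, a nontrivial free abelian group. For a free abelian group $H$ we have $\Sigma^n(H,\Z)=S(H)$. By the discussion preceding the statement, the map $\Stab(\tau)\to G_{\mathrm{ab}}$ is injective, so $\pi(H)$ is infinite. Lemma \ref{lem:densitycond}(i) then gives that $\Omega_\sigma$ is dense. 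Theorem \ref{teo:A} now yields: $\Sigma^k(G,\Z)$ is dense if and only if $\Sigma^k(G,\Z)\cap-\Sigma^k(G,\Z)\neq\emptyset$, if and only if $|\Who|$ is $(k-1)$-acyclic.

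For the dichotomy we use symmetry. The map $C^v_A\mapsto (C^v_A)^{-1}$ on the generators of Theorem \ref{teo:conmu} preserves the relations. Relations (i) and (ii) are commutators of generators, and inverting both entries preserves commutation. For relation (iii), we must check that $[(C^v_A)^{-1}(C^w_B)^{-1},(C^w_A)^{-1}]=1$. This follows because $C^v_A$ commutes with $C^w_B$ by relation (ii), as $B$ is dominant and $A$ is shared with $A\neq B$; one should double-check the case split here. Given that, $(C^v_AC^w_B)^{-1}=(C^v_A)^{-1}(C^w_B)^{-1}$, and the inverse of an element commuting with $C^w_A$ still commutes with $(C^w_A)^{-1}$. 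These relations also hold in the quotient $\POut$, since inner automorphisms are products of partial conjugations and are permuted accordingly. Thus we get an automorphism $\iota$ of $G$ that inverts the generators. For any character, $\chi\circ\iota$ takes the value $-\chi(C^v_A)$ on each generator $C^v_A$, so $\chi\circ\iota=-\chi$. Since $\Sigma^k(G,\Z)$ is invariant under automorphisms, $\Sigma^k(G,\Z)=-\Sigma^k(G,\Z)$. Hence $\Sigma^k(G,\Z)\cap-\Sigma^k(G,\Z)=\Sigma^k(G,\Z)$, and the equivalence of items (i) and (iii) of Theorem \ref{teo:A} says: either $\Sigma^k(G,\Z)$ is dense, exactly when $|\Who|$ is $(k-1)$-acyclic, or it is empty.

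The main obstacle is making sure the inversion map is well defined on $\POut(\A)$, i.e.\ that relation (iii) survives after inverting the generators. The fallback is the general criterion of \cite[Definition 2.4]{ErshovZaremsky} applied to the Koban--Piggott presentation, checking each relation type directly.
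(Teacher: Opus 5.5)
Your route is the paper's. You verify Hypothesis (*) for $\POut(\A)\curvearrowright\MM_\G$ and identify $|\Wh_\G|_{\mathrm{sing}}=|\Who|$; your computation $G_\sigma=\Stab(\tau_0)$ for a chain $\tau_0<\dots<\tau_r$ is a detail the paper leaves implicit. You get density of each $\Omega_\sigma$ from Lemma~\ref{lem:densitycond}(i) and the injectivity of $\Stab(\tau)\to G_{\mathrm{ab}}$, apply Theorem~\ref{teo:A}, and finish with the symmetry of $\Sigma^k(G,\Z)$, which the paper simply asserts as a consequence of the presentation.

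\textbf{The step you flagged fails as written.} Relation (ii) of Theorem~\ref{teo:conmu} applies only when $A,B$ are shared and distinct, or when one of them is subordinate. ``$A$ shared, $B$ dominant'' is exactly the excluded case, and there the two partial conjugations genuinely do not commute. Take $F_3=\langle x_1,x_2,x_3\rangle$ with $v=x_1$, $w=x_2$, $A=\{x_3\}$, $B=\{x_1\}$. The automorphisms $x_3\mapsto x_1x_3x_1^{-1}$ and $x_1\mapsto x_2x_1x_2^{-1}$ do not commute, even modulo $\Inn(F_3)$.

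\textbf{The repair uses relation (i), not (ii).} The Koban--Piggott relation generalizes McCool's $[\alpha_{ij}\alpha_{kj},\alpha_{ik}]=1$ (compare Theorem~\ref{teo:presFR}(3)). It has the \emph{same} acting vertex in both factors of the first entry: $[C^v_AC^v_B,C^w_A]=1$, with $B$ the component of $\Gamma-\st(v)$ containing $w$. The $C^w_B$ printed in Theorem~\ref{teo:conmu} appears to be a misprint. Since $[C^v_A,C^v_B]=1$ by (i), the element $(C^v_A)^{-1}(C^v_B)^{-1}=(C^v_AC^v_B)^{-1}$ commutes with $(C^w_A)^{-1}$. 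If you keep the printed form instead, use $[C^w_B,C^w_A]=1$ from (i): the relation then gives $[C^v_A,C^w_A]=1$, and the inverted relation follows.

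One smaller correction: the inversion map does not permute inner automorphisms, it inverts them. Conjugation by $v$ is $\prod_A C^v_A$ with commuting factors, so it is sent to its own inverse. Hence $\Inn(\A)$ is preserved and the map descends to $\POut(\A)$.

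With these fixes your argument is complete.
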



\subsection{ The first invariant \texorpdfstring{$\Sigma^1$}\/ and the connectivity of the fundamental domain }

 The particular case when $k=1$ in Theorem \ref{teo:firstPartPOutRAAGs} implies that, for the group $G=\POut(\A)$, the invariant $\Sigma^1(G)$ is either empty or dense in $S(G)$. This is not new: it is a consequence of the explicit description of $\Sigma^1$ for these groups given in \cite{DayWade}, where it is shown that $\Sigma^1$ is the result of removing certain linear subspaces from $S(G)$. Moreover, Theorems 1.3 and 4.3 in \cite{Escartin} characterize in terms of $\G$ which is the case. 
We recall below this characterization, to do that we first introduce the following notation. 

Let $\G$ be a graph such that for any $v\in\G$, $\G-\st(v)$ has at most two components. It follows from results in  \cite{DayWade}, cf. Theorem \ref{teo:DayWade} below, that this condition implies that $\POut(A_\G)$ is isomorphic to a RAAG. This is true for more general graphs, but in this case, the description of the defining graph of this new RAAG is particularly easy.  
Let $\G^\sil$ be the  graph having as vertices all those $v\in\Gamma$ such that $\G-\st(v)$ has precisely 2 components, and an edge between two such vertices $v,w$ whenever $v$ and $w$ do not form a SIL pair in $\Gamma$. Then $\POut(A_\G)$ is the RAAG $A_{\G^\sil}$. Moreover, if we denote by $\widehat{\G^\sil}$ the flag complex obtained by gluing an $(n-1)$-simplex to each $n$-clique of $\G^\sil$,
it turns out that the barycentric subdivision of $\widehat{\G^\sil}$  can be identified with $|\Wh_{\G}^0|$.  This is in fact is a particular case of the more general Theorem \ref{teo:homotopWhi-RAAG} below, but we think it is worth to see this case first because the relation between both complexes is much more explicit here. We denote by $\mathcal{P}(\Gamma^\sil)$ the poset of (non-empty) cliques in $\Gamma^\sil$, so the barycentric subdivision of $\widehat{\G^\sil}$ is the geometric realization $|\mathcal{P}(\Gamma^\sil)|$.


\begin{teo}\label{teo:barycentricSubd}
    Let $\G$ be a graph such that, for every $v\in\G$, $\G-\st(v)$ has at most two components. Then the posets $\mathcal{P}(\Gamma^\sil)$ and  $\Wh_\G^0$ are isomorphic.
\end{teo}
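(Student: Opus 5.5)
We define an explicit order isomorphism $\Phi:\mathcal{P}(\G^\sil)\to\Who$ and check that it and its inverse preserve order.

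\textbf{Structure of vertex types.} Under the hypothesis, for every $v\in\G$ the set $\G-\st(v)$ has at most two components. A based partition at $v$ is a partition of $\G-\lk(v)$ into $\{v\}$ and petals that are unions of components. Hence there are at most two based partitions at $v$:
\begin{itemize}
\item the trivial one $\{\{v\},\G-\st(v)\}$;
\item when $v\in\G^\sil$, with components $A_v,B_v$, the one of length two, $\theta_v=\{\{v\},A_v,B_v\}$.
\end{itemize}
A vertex type is therefore determined by the set $S(\tau)=\{v\in\G^\sil\mid \tau_v=\theta_v\}$. The nuclear type $\OO$ corresponds to $S=\emptyset$. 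Since $\Stab(\tau)$ is free abelian generated by the $C^v_{A_v}$ with $v\in S(\tau)$, Remark \ref{rem:stab} gives
\[
\tau\le\tau' \iff S(\tau)\subseteq S(\tau').
\]
This is also directly clear, because $\theta_v$ refines the trivial partition.

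\textbf{Compatibility equals adjacency in $\G^\sil$.} This is the main step. We must show that for distinct $u,v\in\G^\sil$, the partitions $\theta_u,\theta_v$ are compatible if and only if $u,v$ do not form a SIL pair. One direction is immediate: if $u\in\st(v)$, or if $u,v$ are not a SIL pair, they are compatible, as noted after the definition of crossing.

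For the other direction, suppose $u,v$ are not linked and form a SIL pair. Then there is a shared component $\C$ of both $\G-\st(u)$ and $\G-\st(v)$. This $\C$ does not contain $v$, since $v\notin \G-\st(v)$, and similarly does not contain $u$. So $\C$ is a petal of $\theta_u$ not containing $v$ and a petal of $\theta_v$ not containing $u$. Taking $P=Q=\C$ gives $P\cap Q\neq\emptyset$, so $\theta_u$ and $\theta_v$ cross.

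Alternatively, one can use Lemma \ref{lem:comp} together with relation (iii) of Theorem \ref{teo:conmu}: for a SIL pair, $C^u_{\C}$ and $C^v_{\C'}$ fail to commute for a suitable choice of components, and the relations of that presentation decide commutation. The care needed is in confirming that a shared component is never dominant. This holds because the dominant component of $\G-\st(u)$ contains $v$, whereas no component of $\G-\st(v)$ contains $v$.

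\textbf{The isomorphism.} Vertex types are exactly the collections of pairwise compatible based partitions. By the two previous steps, the nonempty sets $S(\tau)$ arising from elements of $\Who$ are exactly the nonempty sets of pairwise $\G^\sil$-adjacent vertices, that is, the cliques of $\G^\sil$. So $\Phi(c)$ is the vertex type with $S=c$, and $\tau\mapsto S(\tau)$ is its inverse. The correspondence is bijective, and by the first step it preserves and reflects inclusion, so $\mathcal{P}(\G^\sil)\cong\Who$.
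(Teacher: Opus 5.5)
Your proposal is correct and follows the same route as the paper. Both proofs identify a vertex type with the set of vertices of $\Gamma^{\sil}$ at which it carries the two-petal partition $\theta_v$, and the paper's $\phi(\sigma)=\tau_{x_0}\vee\cdots\vee\tau_{x_k}$ is exactly your $\Phi(\sigma)$. You go further than the paper by proving that a SIL pair forces $\theta_u,\theta_v$ to cross (take $P=Q$ a shared component), which gives surjectivity and which the paper leaves to ``one easily sees''. The one point you leave implicit is that a trivial based partition is compatible with every based partition, since its unique petal $\Gamma-\st(u)$ contains the other base vertex; this is needed for every clique to actually yield a vertex type.
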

\begin{proof} We define a map $\phi:\mathcal{P}(\Gamma^\sil)\to\Wh_\G^0$ as follows.
Let $\sigma=\{x_0,\ldots,x_k\}$ be a clique in $\Gamma^\sil$. Then for each $x_i$ there is a vertex type $\tau_{x_i}$ with only one non trivial based partition which is based at $x_i$ and the hypothesis implies that this based partition has exactly 2 petals. The fact that $\sigma$ is a clique implies that the elements $\{x_0,\ldots,x_k\}$ do not form SIL-pairs with each other and therefore, the vertex types $\{\tau_{x_0},\ldots,\tau_{x_k}\}$ are all compatible so there exists a lower upper bound $\tau_{x_0}\vee\ldots\vee\tau_{x_k}$. We set $\phi(\sigma)=\tau_{x_o}\vee\ldots\vee\tau_{x_k}$. The only non trivial based partitions of this vertex type are based precisely  at the vertices $x_0,\ldots,x_k$. Taking into account that the only possible non-trivial based partitions are of the form  $\phi(x)$ for $x\in\G^\sil$ one easily sees that $\phi$ is indeed a poset isomorphism. 
\end{proof}

Using this fact, we can state as follows the characterization of when $\Sigma^1(\POut(A_\G))$ is nonempty (the equivalence between (iv) and (v) is implicit in the proof of \cite{Escartin} but not explicitly stated there): 
 
\begin{teo}\cite[Theorems 1.3 and 4.3]{Escartin},\cite{DayWade}\label{teo:sigma1}
    Let $G=\POut(\A)$ be the pure symmetric outer-automorphism group of a \textup{RAAG} $\A$. Then, the the following are equivalent:
    \begin{itemize}
        \item[(i)] $\Sigma^{1}(G)\neq\emptyset$,

        \item[(ii)] $\Sigma^1(G)$ is dense in $S(G)$

        \item[(iii)] $|\Who|$ is connected,

        \item[(iv)] either there exists $v\in\G$ such that $\G-\st(v)$ has at least three connected components, or $\G-\st(v)$ has at most two connected components for every $v$ and the SIL graph $\G^\sil$ is connected,

        \item[(v)] $G$ fibers,

        \item[(vi)] the first $\ell^{(2)}$-Betti number of $G$ vanishes.
    \end{itemize}
\end{teo}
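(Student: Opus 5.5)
The plan is to establish the equivalences by a cycle built mostly from results already in the paper, and to use the cited results of \cite{Escartin} and \cite{DayWade} only for the graph-theoretic criterion (iv) and the $\ell^{(2)}$ statement (vi).

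First, take $n=1$ in Theorem \ref{teo:firstPartPOutRAAGs}. Then $\Sigma^1(G,\Z)$ is either dense or empty, and it is dense exactly when $|\Who|$ is $0$-acyclic, i.e.\ connected. Since $\Sigma^1(G)=\Sigma^1(G,\Z)$, this gives (i)$\iff$(ii)$\iff$(iii). Next, Theorem \ref{teo:A} with $n=1$ applies because the density hypotheses were checked just before Theorem \ref{teo:firstPartPOutRAAGs}. It yields (iii)$\iff$ ``$G$ fibers homologically at degree $1$.'' Fibering in degree $1$ means the kernel of some $G\to\Z$ is of type $\FP_1$, which is the same as being finitely generated, so this is exactly (v). The equivalence (i)$\iff$(vi) I would take directly from \cite[Theorems 1.3 and 4.3]{Escartin}, which compare vanishing of $\beta^{(2)}_1$ with nonemptiness of $\Sigma^1$ for these groups.

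The remaining and main step is (iii)$\iff$(iv), which I would prove combinatorially by splitting into two cases.

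\emph{Case 1: some $v\in\G$ has $\G-\st(v)$ with at least three components.} I want $|\Who|$ connected. Any element of $\Who$ lies above an element with a single nontrivial based partition, and any based partition lies above one of length $2$, so it suffices to connect the minimal elements: vertex types $\tau_a$ with exactly one nontrivial partition, of length $2$. Two such types with $a,b$ not a SIL pair, or with $a,b$ linked, are compatible by the remark after the definition of compatibility. Lemma \ref{lem:lub} then gives a join $\tau_a\vee\tau_b$ in $\Who$, which joins them by a path. When $a,b$ form a SIL pair, I would route the path through length-$2$ partitions at $v$, choosing petals that avoid crossing. Having three components at $v$ should give enough freedom to pick a petal containing the dominant components relative to both $a$ and $b$, so that it is compatible with both. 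This is the delicate part, and I would first verify it carefully against the crossing definition, since the argument hinges on it. If it turns out to be unmanageable, I would fall back on the $\Sigma^1$ computation of \cite{DayWade} together with \cite{Escartin}, which gives nonemptiness directly in this case, and then transfer to (iii) via the equivalences already proved.

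\emph{Case 2: every $\G-\st(v)$ has at most two components.} Theorem \ref{teo:barycentricSubd} identifies $\Who$ with the clique poset $\mathcal{P}(\G^\sil)$, whose realization is the barycentric subdivision of $\widehat{\G^\sil}$. That complex is connected iff $\G^\sil$ is connected. One degenerate point needs checking: if $\G^\sil$ is empty, both (iii) and (iv) should fail, which matches the convention that the empty space is not connected.

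Combining the two cases gives (iii)$\iff$(iv), which closes the chain of equivalences.
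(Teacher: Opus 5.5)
\textbf{Overall.} Most of your chain is sound. Theorem~\ref{teo:firstPartPOutRAAGs} with $k=1$ gives (i)$\iff$(ii)$\iff$(iii), Theorem~\ref{teo:A} gives (iii)$\iff$(v), and Case~2 via Theorem~\ref{teo:barycentricSubd} is correct. The paper does not argue (iii)$\iff$(iv) combinatorially at all. It takes (iv)--(vi) from \cite{Escartin}, \cite{DayWade} and gets (iii) from Theorem~\ref{teo:firstPartPOutRAAGs}, so your fallback is exactly the paper's route.

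\textbf{The gap in Case~1.} The step that fails is the one you flagged. Putting the dominant components in one petal is not the relevant condition. Let $\{\{v\},P,P'\}$ be a length-$2$ partition with $a\in P$, and let $\tau_a=\{\{a\},A,A'\}$ with $v\in A$. These are compatible if and only if $P'$ contains no shared component lying in $A'$, and such a $P'$ need not exist. Take $\G$ a star with centre $w$ and leaves $v,a,b,c$, so that $\G-\st(v)$ has the three components $\{a\},\{b\},\{c\}$. Take $\tau_a=\{\{a\},\{v\},\{b,c\}\}$. Every nontrivial partition at $v$ has a petal avoiding $a$, and that petal meets $\{b,c\}$, so it crosses $\tau_a$. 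Hence no partition at $v$ is compatible with $\tau_a$. A fortiori none is compatible with both $\tau_a$ and $\tau_b=\{\{b\},\{v\},\{a,c\}\}$, and these two cross each other, so they genuinely need routing.

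\textbf{The repair.} Stop routing a \emph{given} pair through $v$.
\begin{itemize}
\item Vertex types whose only nontrivial partition sits at a fixed vertex $a$ are pairwise compatible, since partitions at the same vertex never cross. By Lemma~\ref{lem:lub} they all lie in one component of $|\Who|$.
\item It therefore suffices to find, for each $a$, \emph{some} nontrivial partition at $a$ compatible with \emph{some} nontrivial partition at $v$. This is automatic unless $a,v$ form a SIL pair.
\item In the SIL case, pick a shared component $S$. Pick a component $C$ of $\G-\st(v)$ different from $S$ and from the component containing $a$; this is where the three components are used.
\item Take $\{\{a\},S,(\G-\st(a))-S\}$ and $\{\{v\},C,(\G-\st(v))-C\}$. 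The only petals relevant for crossing are $S$ and $C$. These are distinct components of $\G-\st(v)$, hence disjoint, so there is no crossing.
\end{itemize}
In the example, this passes from $\tau_a$ to $\{\{a\},\{v,c\},\{b\}\}$ and then to $\{\{v\},\{a,b\},\{c\}\}$. With this fix your Case~1 is complete. You then have a self-contained proof of (iii)$\iff$(iv) and need \cite{Escartin} only for (vi).
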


\section{The support graph}\label{sec:supp}

In \cite{DayWade}, Day and Wade define, for each vertex $v\in\G$, a graph called the \emph{support graph} that they use to characterize when the group $\POut(A_\G)$ is isomorphic to a RAAG. This graph turns out to be useful to give a condition that implies the contractibility of the complex $|\Who|$.

\begin{defi}[Support graph]
    For each $v\in\G$, we define a new graph called  the \emph{support graph} $\Delta_v$ with vertex set the set of components of $\G-\st(v)$. Given $S,T$  components  of $\G-\st(v)$; there is an edge $(S,T)$ in $\Delta_v$  if there exists some $w\in\G$ such that $w\in T$ and $S$ is a shared component for $v$ and $w$.
\end{defi}

Following \cite{DayWade}, we will use the next notation: for $v\in\G$ and a subgraph $L\subseteq\Delta_v$, $C^v_L$ is the partial conjugation
$$
C_L^v:=\prod_{A\in L}C_A^v.
$$
By \cite[Proposition 2.9]{DayWade}, if $L$ is a connected component of the support graph $\Delta_v$, then
the image of the partial conjugation $C_L^v$ in $\POut(A)$ is central. 
This has strong consequences because of Lemma \ref{lem:comp} as we shall see next.

\begin{prop}\label{teo:supportGraph}
     Assume that there is a vertex $v\in \G$ such that its support graph $\Delta_v$ has at least 2 connected components. Then $|\Wh_\G^0|$ is contractible.
\end{prop}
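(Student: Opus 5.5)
We use the standard cone-point argument for posets, with a vertex type built from a central element. Let $L$ be a connected component of $\Delta_v$. Since $\Delta_v$ has at least two components, $L$ is not all of $\Delta_v$. Consider the partition based at $v$
\[\rho_v=\Big\{\{v\},\ \textstyle\bigcup_{A\in L}A,\ \textstyle\bigcup_{A\notin L}A\Big\},\]
where the unions run over the components $A$ of $\G-\st(v)$. Both petals are nonempty, so this partition is nontrivial. Let $\rho$ be the vertex type whose only nontrivial based partition is $\rho_v$; it lies in $\Wh_\G^0$. By definition $\Stab(\rho)=\langle C^v_{L},C^v_{\Delta_v-L}\rangle$. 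The complement $\Delta_v-L$ is a union of connected components of $\Delta_v$, so by \cite[Proposition 2.9]{DayWade} (applied componentwise and multiplied) both generators are central in $\POut(\A)$. Hence $\Stab(\rho)$ is central; this is the point that needs care, and if needed we can note directly that $C^v_{\Delta_v}$ is inner and hence trivial in $\POut(\A)$.

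Now let $\tau\in\Wh_\G^0$ be arbitrary. Since $\Stab(\rho)$ is central, it commutes with $\Stab(\tau)$, so by Lemma \ref{lem:comp} the vertex types $\rho$ and $\tau$ are compatible. Lemma \ref{lem:lub} then gives the least upper bound $\tau\vee\rho$, and it lies in $\Wh_\G^0$ because $\tau\vee\rho\geq\rho>\OO$. We therefore have $\tau\leq\tau\vee\rho\geq\rho$ for every $\tau\in\Wh_\G^0$.

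To conclude, we apply Quillen's fiber lemma in its standard form: if $f,g$ are poset maps with $f\leq g$ pointwise, then their realizations are homotopic. Define $f(\tau)=\tau\vee\rho$; this is order preserving because $\vee$ is a least upper bound. The identity map satisfies $\mathrm{id}\leq f$, and the constant map $c_\rho$ satisfies $c_\rho\leq f$. Hence $\mathrm{id}\simeq f\simeq c_\rho$ on $|\Wh_\G^0|$, so $|\Wh_\G^0|$ is contractible.

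The main obstacle is the centrality of $\Stab(\rho)$, which is the input needed for Lemma \ref{lem:comp}. This requires checking that the partial conjugation associated to a union of components of $\Delta_v$ is central in $\POut(\A)$, either from \cite[Proposition 2.9]{DayWade} or directly from the relations in Theorem \ref{teo:conmu}. The remaining step, that $f$ is monotone, is routine given Lemma \ref{lem:lub}.
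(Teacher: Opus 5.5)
Your proof is correct and follows the paper's argument: you take a vertex type based at $v$ whose stabilizer is central by \cite[Proposition 2.9]{DayWade}, deduce via Lemma \ref{lem:comp} that it is compatible with every vertex type, and then apply the Quillen cone argument to $\tau\leq\tau\vee\rho\geq\rho$. The only difference is cosmetic: the paper uses the partition $\delta_v=\{\{v\},L_1,\ldots,L_t\}$ with one petal per component of $\Delta_v$, while you use the two-petal coarsening $\{\{v\},L,\Delta_v-L\}$, which works equally well (and the result you invoke, that $f\leq g$ implies $|f|\simeq|g|$, is Quillen's comparison lemma rather than his fiber lemma).
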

\begin{proof} Let $L_1,...,L_t$ be the connected components in the support graph $\Delta_v$, so the assumption implies $t\geq 2$. Let $\delta$ be the vertex type  with only non trivial based  partition $\delta_v=\{\{v\},L_1,...,L_t\}$. Then $\Stab(\delta)$ is generated by the images in $\POut(\A)$ of the partial conjugations
$$C^v_{L_1},\ldots,C^v_{L_t}$$
which, by \cite[Proposition 2.9]{DayWade}, are all central, so the group $\Stab(\delta)$ itself is central. By
Lemma \ref{lem:comp}, this implies that $\delta$ is compatible with any other vertex type $\tau\in\Wh_\G$ so we have a poset map:
   $$
f: \Wh_\G^0\to\Wh_\G^0
   $$
   $$
\tau\mapsto \tau\vee\delta.
   $$
   As $\delta\leq f(\tau) \geq \tau$ for all vertex types $\tau$,  Quillen poset Lemma (cf. \cite[Lemma 1.5]{Q}) imply that $|\Wh_\G^0|$ is contractible.
\end{proof}

In the particular case when the defining graph $\G$ is a tree this condition is easy to check, mainly because then we have the following simple way to determine when two vertices form a SIL pair.

\begin{rem}\label{rem:siltree} Let $\A$ be a \textup{RAAG} such that the defining graph $\G$ is a tree. Then two vertices $a,v\in\G$ form a SIL pair if, and only if, there is a third vertex $w$ linked to both $a,v$ that has valence at least three. In that case the shared components are precisely the components of $\G-w$ that do not contain neither $a$ nor $v$.
\end{rem}

\begin{figure}
\begin{tikzpicture}
   \centering
      
 \draw (0,0)--(3,0);
 \draw (1,0)--(2/3,2/3);
  \draw (1,0)--(4/3,2/3);
   \draw (2,0)--(7/3,-2/3);
    \draw (2,0)--(8.4/3,4/3);
    \draw (7/3,2/3)--(10/3,2/3);
\node[label={ below: $v$}] at (0,0){$\bullet$}; 
\node[label={ below: $w$}] at (1,0){$\bullet$}; 
\node[label={ below: $a$}] at (2,0){$\bullet$}; 
\node at (3,0){$\bullet$}; 
\node[label={above: $C_1$}] at (2/3,2/3){$\bullet$}; 
\node[label={above: $C_1$}] at  (4/3,2/3){$\bullet$}; 
\node at (7/3,-2/3){$\bullet$}; 
\node at (7.2/3,2/3){$\bullet$}; 
\node at (10/3,2/3){$\bullet$}; 
\node at (8.4/3,4/3){$\bullet$}; 
    \end{tikzpicture}
    \caption{SIL pair $\{v,a\}$ with shared components $C_1$ and $C_2$}
    \end{figure}
\smallskip

Using this observation we have:

\begin{lem}\label{lem:3notLeaves}
    Let $\A$ be a \textup{RAAG} with $\G$ a tree having at least three nodes which are not leaves. Then there is some $v\in\G$ such that its support graph $\Delta_v$ has at least two connected components so $|\Wh_\G^0|$ is contractible. 
\end{lem}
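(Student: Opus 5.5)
We need to find $v$ in a tree $\G$ with at least three internal (non-leaf) nodes such that $\Delta_v$ is disconnected. The second claim then follows at once from Proposition \ref{teo:supportGraph}.

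The strategy is to take $v$ to be a leaf attached to a suitable internal node, and to control the edges of $\Delta_v$ using Remark \ref{rem:siltree}. Let $v$ be a leaf with unique neighbour $w$. Then $\st(v)=\{v,w\}$, so the components of $\G-\st(v)$ are exactly the components of $\G-\{v,w\}$. These are the branches of $\G$ at $w$ other than the edge to $v$: one branch $B_u$ for each neighbour $u\neq v$ of $w$.

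An edge $(S,T)$ in $\Delta_v$ requires a vertex $x\in T$ such that $v,x$ form a SIL pair and $S$ is shared. By Remark \ref{rem:siltree}, a SIL pair $\{v,x\}$ needs a common neighbour of valence at least $3$. Since $v$ is a leaf, that common neighbour must be $w$, so $x$ is a neighbour of $w$ and $\mathrm{val}(w)\geq 3$. In that case the shared components are the components of $\G-w$ containing neither $v$ nor $x$, namely the branches $B_u$ with $u\neq x$. Hence the edges of $\Delta_v$ are exactly the pairs $(B_u,B_x)$ with $u\neq x$, where $x$ is a neighbour of $w$ lying in $T=B_x$; note that $x$ always lies in $B_x$.

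This seems to make $\Delta_v$ complete whenever $\mathrm{val}(w)\geq 3$, so leaves are the wrong choice and the real choice of $v$ must be more subtle. This is the main obstacle. I would reconsider and take $v$ to be an internal node instead. Let $P$ be a path $a-v-b$ with $a,b$ internal. Such a path exists because there are at least three internal nodes: the internal nodes span a subtree, and any tree with at least three vertices contains a vertex of degree at least $2$ within that subtree.

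For internal $v$, the components of $\G-\st(v)$ are the subtrees hanging off the neighbours $u$ of $v$, that is $\G$-branches at $u$ away from $v$. Such a component can be non-empty only when $u$ is internal. I would then compute the SIL pairs $\{v,x\}$: they arise through a neighbour $u$ of $v$ of valence at least $3$, with $x$ a neighbour of $u$ other than $v$. The shared components are the components of $\G-u$ avoiding $v$ and $x$, which are subtrees behind the other neighbours of $u$. These are typically not components of $\G-\st(v)$ unless they consist of a single vertex. So edges in $\Delta_v$ only join components sitting behind the same neighbour $u$ of $v$.

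Consequently, components of $\G-\st(v)$ lying behind distinct neighbours $a$ and $b$ are never joined in $\Delta_v$. Since $a$ and $b$ are internal, each contributes at least one component, so $\Delta_v$ has at least two connected components. The step to check carefully is this edge analysis; in particular I must verify that $x\in T$ forces $T$ to lie behind the same neighbour $u$ as $S$.
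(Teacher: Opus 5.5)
Your argument is correct and is essentially the paper's proof: you choose $v$ with at least two non-leaf neighbours, and then use Remark \ref{rem:siltree} to show that every edge $(S,T)$ of $\Delta_v$ joins two components lying behind the same neighbour $u$ of $v$. The step you flagged is immediate, since $x\in T$ is adjacent to $u$ and $S$ is a branch of $\Gamma-u$ avoiding $v$; so those non-leaf neighbours separate $\Delta_v$. One correction: your aside that the shared components ``are typically not components of $\Gamma-\st(v)$'' is false, because for $y\in\lk(u)\setminus\{v,x\}$ the branch of $\Gamma-u$ through $y$ is exactly a component of $\Gamma-\st(v)$, but this plays no role in your conclusion.
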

\begin{proof} The hypothesis on $\G$ implies that there is some vertex $v\in\G$ such that the set $\Omega$ of non-leaves in $\lk(v)$ contains at least two elements. Observe first that for any $w\in\Omega$, as $w$ is not a leave, there must be some component of $\G-\st(v)$ having some vertex linked to $w$, and as $\G$ is a tree, any such component can not be linked to two different elements of $\Omega$.

Now, let $(P,Q)$ be an edge of $\Delta_v$. Then, upon a possible interchange of $P,Q$, there must be some $a\in\G$ such that $a,v$ form a SIL pair so that $P,Q$ are components of $\Gamma-\st(v)$, $a\in P$ and $Q$ is shared for $a$ and $v$. 
By Remark \ref{rem:siltree}, this implies that $a$ must be linked to some of the elements $w\in\Omega$ and that some vertex in $Q$ is also linked to $w$. This means that we can use $\Omega$ to distinguish path connected components of the support graph. As $\Omega$ has at least 2 elements, we deduce that $\Delta_v$ has at least two connected components so by Theorem \ref{teo:supportGraph}, $|\Wh_\G^0|$ is contractible.
\end{proof}

At this point, we can easily give a complete description of the homotopy type of $|\Who|$ in the case when $\G$ is a tree, because the only possible trees which do not satisfy the hypothesis of Lemma \ref{lem:3notLeaves} are either stars or consist of two stars linked by the internal node. In the first case, i.e., if $\Gamma$ is a star of, say, $n$ leaves, then it is easy to see that the central vertex can be removed without altering the $\POut(A_\Gamma)$, and in fact $\Wh_\G=\Wh_n$. Finally, to understand the homotopy type of $\Who$ when $\Gamma$ consists of two linked stars we will use the next more general fact.

\begin{prop}\label{prop:join}
    Let $\A$ be a \textup{RAAG} and assume that we can split the set of vertices of $\G$  as a disjoint union
    \[V(\Gamma)=V_1\sqcup\ldots\sqcup V_k\]
    such that for any $v\in V_i$ and $w\in V_j$, with $i\neq j$, $v$ and $w$ do not form a SIL-pair. Let $W_i$ be the subposet of $\Wh_\G$ formed by all the vertex types in $\Wh_\G$ with all the based partitions trivial except of possibly those based at vertices in $V_i$ and $W_i^0=W_i-\OO$. Then there is a homotopy equivalence
    \[|\Wh^0_\G|\simeq |W_1^0| *\cdots* |W^0_k|.\]

\end{prop}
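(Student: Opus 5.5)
The plan is to show that $\Wh_\G$ is isomorphic, as a poset, to the product $W_1\times\cdots\times W_k$. Then I will use the standard fact that removing the minimum of a product of posets each having a minimum yields a poset whose realization is homotopy equivalent to the join of the realizations of the reduced factors (Quillen/Walker: $|(P\times Q)-\{(\hat 0,\hat 0)\}|\simeq |P^0|*|Q^0|$), iterated $k$ times.

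\textbf{Step 1: the product decomposition.} Define $\Phi:W_1\times\cdots\times W_k\to\Wh_\G$ by $(\tau^1,\ldots,\tau^k)\mapsto\tau^1\vee\cdots\vee\tau^k$. To see that this is well defined, note that a based partition at a vertex of $V_i$ and one at a vertex of $V_j$ with $i\neq j$ are always compatible. Indeed, the two vertices are either linked or they do not form a SIL pair, so there is no shared component and hence no crossing (this is the remark following the definition of compatibility). Hence $\tau^i$ and $\tau^j$ are compatible, and Lemma \ref{lem:lub} gives the join. By the explicit description in Lemma \ref{lem:lub}, the based partition of $\tau^1\vee\cdots\vee\tau^k$ at $a\in V_i$ is just $\tau^i_a$, because all other factors are trivial at $a$.

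The inverse map sends $\tau$ to $(\tau|_{V_1},\ldots,\tau|_{V_k})$, where $\tau|_{V_i}$ keeps the based partitions at vertices of $V_i$ and makes all others trivial. Each restriction is still a vertex type: trivial partitions are compatible with everything, and the remaining partitions were already pairwise compatible. Since the order on $\Wh_\G$ is defined coordinatewise in the based partitions, both maps are order preserving, so $\Phi$ is a poset isomorphism sending $(\OO,\ldots,\OO)$ to $\OO$.

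\textbf{Step 2: the join formula.} For posets $P,Q$ with minima $\hat0_P,\hat0_Q$, I will show
\[|(P\times Q)-\{(\hat0_P,\hat0_Q)\}|\simeq|P^0|*|Q^0|,\qquad P^0=P-\hat0_P,\ Q^0=Q-\hat0_Q.\]
One route is Quillen's result \cite{Q} that $|(P^0\ast' Q^0)|$, realized as the ordinal-sum-type poset $P^0\sqcup Q^0\sqcup(P^0\times Q^0)$, is homeomorphic to the join. The reduced product is exactly that poset: its elements are $(p,\hat0)$, $(\hat0,q)$ and $(p,q)$ with $p\in P^0$, $q\in Q^0$. Quillen (Prop.~1.9 in \cite{Q}) proves that the realization of this poset is homotopy equivalent to $|P^0|*|Q^0|$. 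I will cite this result rather than reprove it.

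Induction on $k$ then finishes the argument. Apply the formula with $P=W_1\times\cdots\times W_{k-1}$ and $Q=W_k$, using $|P^0|\simeq|W_1^0|*\cdots*|W_{k-1}^0|$ and the fact that joins respect homotopy equivalences.

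\textbf{Main obstacle.} The main obstacle is Step 1, specifically verifying that the restriction $\tau|_{V_i}$ and the join really invert each other on the level of based partitions. This relies on the petal-intersection description of $\vee$ in Lemma \ref{lem:lub} together with the fact that trivial partitions have a single petal. Once that is settled, Step 2 is a standard citation.
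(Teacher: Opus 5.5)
Your proposal is correct and is essentially the paper's argument: the join map $(\gamma^1,\ldots,\gamma^k)\mapsto\gamma^1\vee\cdots\vee\gamma^k$ is a poset isomorphism, because partitions based at non-SIL vertices never cross and Lemma \ref{lem:lub} applies, and iterating Quillen's Proposition~1.9 then gives the join. Taking the domain to be $W_1\times\cdots\times W_k$ and removing only the bottom element afterwards is the accurate formulation (the paper writes the map on $W_1^0\times\cdots\times W_k^0$, which would miss vertex types trivial on some $V_i$). Your one slip, calling the reduced product an ordinal-sum-type poset, is harmless since you only cite Quillen there.
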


\begin{proof} The hypothesis implies that the vertex types in $W_1,\ldots,W_k$ are all compatible, so there is a poset map
   \begin{align*}
       \varphi:W_1^0\times\cdots\times W_k^0&\to \Who\\
\ \ (\gamma^1,...,\gamma^k)\ \  &\mapsto \gamma^1\vee\ldots\vee\gamma^k\\
   \end{align*} 
which is clearly an isomorphism, because for any $\tau\in\Who$ we may write $\tau=\tau^1\vee\ldots\vee\tau^k$ where $\tau^i$ has the same based partition based at vertices in $V_i$ as $\tau$, and trivial based partitions at any other vertex. The result follows by iterating a Lemma due to Quillen (Proposition 1.9 in \cite{Q}), who shows that given posets $P,Q$ with minimal element $0_P,0_Q$ respectively, there is an homotopy equivalence
    \[
    P\times Q \backslash\{(0_P,0_Q)\}\simeq P \backslash\{0_P\}*Q \backslash\{0_Q\}.
    \]
    \end{proof}

\begin{cor}\label{cor:twoStars}
    Let $\A$ be a \textup{RAAG} with $\G$ a tree consisting of two stars $S_1,S_2$ linked by an edge between the two internal nodes. If $S_1$ has $n_1$ leaves and $S_2$ has $n_2$, then $|\Wh_\G^0|\simeq |\Wh_{n_1+1}^0|*|\Wh_{n_2+1}^0|$. 
\end{cor}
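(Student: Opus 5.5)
The plan is to apply Proposition \ref{prop:join} with two blocks, and then identify each block with a classical Whitehead poset, as the paper already did for a single star.

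Label the vertices of $\G$. Let $c_1,c_2$ be the internal nodes of $S_1,S_2$, joined by an edge. Let $\ell^1_1,\dots,\ell^1_{n_1}$ be the leaves attached to $c_1$ and $\ell^2_1,\dots,\ell^2_{n_2}$ the leaves attached to $c_2$. Put
\[V_1=\{c_2,\ell^1_1,\dots,\ell^1_{n_1}\},\qquad V_2=\{c_1,\ell^2_1,\dots,\ell^2_{n_2}\}.\]

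\textbf{Step 1: no SIL pair crosses the blocks.} By Remark \ref{rem:siltree}, two vertices form a SIL pair exactly when they have a common neighbour of valence at least three. The only vertices of valence at least three are $c_1$ and $c_2$ (assuming $n_i\geq 2$; if some $n_i\le 1$ the corresponding factor is degenerate and is handled directly). The neighbours of $c_1$ are $c_2,\ell^1_1,\dots,\ell^1_{n_1}$, which is precisely $V_1$. Likewise the neighbours of $c_2$ form $V_2$. Hence every SIL pair lies inside a single $V_i$, and Proposition \ref{prop:join} gives
\[|\Wh^0_\G|\simeq|W^0_1|*|W^0_2|.\]

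\textbf{Step 2: identify $W_1$ with $\Wh_{n_1+1}$.} Vertex types in $W_1$ are nontrivial only at vertices of $V_1$. For $u\in V_1$, the components of $\G-\st(u)$ are as follows.
\begin{itemize}
\item For $u=\ell^1_i$: the other leaves $\ell^1_j$ as singletons, together with the single component $S_2-\{c_2\}$, which is the whole set of leaves of $S_2$.
\item For $u=c_2$: the singletons $\ell^1_j$.
\end{itemize}
So every based partition at a vertex of $V_1$ is a partition of the set of $n_1+1$ ``pieces'' $\{\ell^1_1,\dots,\ell^1_{n_1},S_2\text{-block}\}$ minus the piece of $u$ itself. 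Here $c_2$ plays the role of the piece formed by $S_2$.

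This is exactly the situation of the free product $A_{\{\ell^1_1\}}*\cdots*A_{\{\ell^1_{n_1}\}}*A_{\{c_2\}}$, that is, $n_1+1$ free factors. The compatibility (non-crossing) conditions also match, since they depend only on which vertices form SIL pairs, namely all pairs in $V_1$ through $c_1$, and on which components are shared.

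To make the identification precise, I would compare with the pure-star case the paper already uses. Deleting $S_2$ and replacing it with the single vertex $c_2$ produces a star with $n_1+1$ leaves centered at $c_1$. For that star, $\Wh$ coincides with $\Wh_{n_1+1}$, as noted above for stars. The restriction map sending a based partition at $u\in V_1$ to the induced partition of the star's pieces is a bijection preserving order and compatibility, because the component $S_2-\{c_2\}$ always travels together with the piece $c_2$.

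This gives $W_1\cong\Wh_{n_1+1}$, so $W_1^0\cong\Wh^0_{n_1+1}$, and symmetrically $W_2^0\cong\Wh^0_{n_2+1}$. Combining with Step 1 yields the corollary.

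\textbf{Main obstacle.} I expect the main difficulty to be Step 2: verifying carefully that the crossing and compatibility relation for $\G$ restricted to $V_1$ coincides with that of the star on $n_1+1$ leaves. This means checking which components are dominant or shared for pairs such as $(\ell^1_i,c_2)$ and $(\ell^1_i,\ell^1_j)$. I would settle it directly from Remark \ref{rem:siltree}: the shared components for a pair in $V_1$ are the components of $\G-c_1$ avoiding both vertices, which is the same combinatorics as in the star.
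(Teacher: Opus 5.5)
Your proposal is correct and follows the paper's proof: it uses the same splitting $V_1=\lk(c_1)$, $V_2=\lk(c_2)$, applies Proposition \ref{prop:join}, and identifies each $W_i^0$ with the Whitehead poset of the free group on $V_i$ (your Step 1, via a common neighbour of valence $\geq 3$, is a cleaner check than the paper's case analysis). One small slip: for $u=\ell^1_i$ the relevant component of $\G-\st(u)$ is all of $S_2$, including $c_2$ (since $c_2\notin\st(\ell^1_i)$), not $S_2-\{c_2\}$; with this correction, the correspondence between that component and the letter $c_2$ is immediate rather than something to check.
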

\begin{proof}
    We claim that we can decompose $V(\G)=V_1\sqcup V_2$ as a disjoint union such that for any $v\in V_1$ and $w\in V_2$, $v$ and $w$ do not form a SIL-pair. 
    Let $r_1$ be the internal node of $S_1$ and $r_2$ the internal node of $S_2$.
    We set $V_1=V(\lk(r_1))$ and $V_2=V(\lk(r_2))$. It is clear that $V(\G)=V_1\sqcup V_2$. Let $v\in V_1$ and $w\in V_2$. If $v$ and $w$ are linked then $v=r_1$ and $w=r_2$, and they do not form a SIL pair. In other case, $v$ must be a leave in $S_1$ and $w$ must be a leave in $S_2$ so there is no vertex linked to both and by Remark \ref{rem:siltree} they can not form a SIL pair.  As a consequence, we can apply Corollary \ref{prop:join} and we get 
\[|\Wh_\G^0|\simeq |W_1^0|*|W_2^0|\]
where $W_i^0$ is the poset of non-trivial vertex types whose non trivial based partitions are based at vertices of $V_i$.
Finally, observe that $W_i^0=\Wh_{n_i+1}^0$ for $i=1,2$ because the vertex types in $W_i^0$ can be seen as the vertex types associated to the free group on $V_i$.
\end{proof}

We are now ready to show Theorem \ref{teo:POutRAAGs}.

\bigskip

\noindent{\sl Proof of Theorem \ref{teo:POutRAAGs}.} 
Part (i) is a consequence of Lemma \ref{lem:3notLeaves}, and part (iii) follows from Corollary \ref{cor:twoStars}.

\qed

\subsection{The case when \texorpdfstring{$\POut(\A)$} is a RAAG}\label{subsec:POutRAAG}

 As observed before, in  \cite{DayWade}, Day and Wade  use the notion of support graph to characterize when the group $\POut(\A)$ is itself a RAAG. We state this characterization next.

\begin{teo}[Theorem 5.12, \cite{DayWade}]\label{teo:DayWade}
    The group $\POut(\A)$ is isomorphic to a \textup{RAAG} if and only if the support graph $\Delta_a$ if a forest for each $a\in\G$. 
\end{teo}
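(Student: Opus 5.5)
The statement is \cite[Theorem 5.12]{DayWade}; the paper only quotes it. Here is how I would organize a proof, treating the two directions separately. Both rely on the Koban--Piggott presentation of Theorem \ref{teo:conmu}, passed to $\POut(\A)$ by adding the inner automorphisms as relators. For each $v$, the inner automorphism by $v$ is the product $\prod_A C^v_A$ over all components $A$ of $\G-\st(v)$. In terms of the support graph, this says that the product of the $C^v_A$ over all vertices $A$ of $\Delta_v$ is trivial in $\POut(\A)$.

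\emph{The ``if'' direction.} Assume every $\Delta_a$ is a forest. I would change generators so that every relation of Theorem \ref{teo:conmu} becomes a commutator of two generators.
\begin{enumerate}
\item In each tree $T$ of $\Delta_v$, fix a root. For each non-root vertex $A$, use the \emph{branch} generator $C^v_{T_A}=\prod_{B\in T_A}C^v_B$, where $T_A$ is the subtree hanging below $A$.
\item For each tree $T$, use the \emph{total} generator $C^v_T$. By \cite[Proposition 2.9]{DayWade} it is central.
\item Drop one total generator per vertex $v$; this implements the inner relation.
\end{enumerate}
Since the passage from the $C^v_A$ to branch and total generators is unitriangular, these elements still generate.

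The key computation is relation (iii), $[C^v_AC^w_B,C^w_A]=1$, with $A$ shared and $B$ dominant. Here $A$ and the component containing $w$ are joined by an edge of $\Delta_v$. Because $\Delta_v$ is a forest, this edge is the unique edge separating a branch. Hence relation (iii) should rewrite as a commutation between a branch generator at $v$ and one at $w$. The remaining relations (i) and (ii) should stay commutators of the new generators.

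The result would be a presentation with only commutator relations among the new generators, so $\POut(\A)\cong A_\Theta$. The graph $\Theta$ has the new generators as vertices, with an edge for each commuting pair.

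\emph{The ``only if'' direction.} Suppose some $\Delta_a$ contains a cycle $A_1,\dots,A_m$, with $m\ge 3$. I would derive a contradiction from an invariant that distinguishes RAAGs. My first choice is the $\Sigma^1$ criterion. For a RAAG $A_\Theta$, the complement $S(A_\Theta)-\Sigma^1(A_\Theta)$ is a union of great subspheres $\{\chi:\chi|_{\Lambda}=0\}$ indexed by separating full subgraphs $\Lambda$. Day and Wade compute $\Sigma^1(\POut(\A))$ explicitly as a complement of linear subspaces. Around the cycle, the vanishing of characters on the $C^a_{A_i}$ yields a ``missing subspace''.

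The step I expect to be hardest is this one: showing that such a subspace cannot arise in the RAAG pattern. I would look for an obstruction through a small quotient. Restrict to the subgroup generated by the $C^a_{A_i}$ together with the witnessing $C^{w_i}$. Then show that its presentation has a relation of the form $[xy,z]=1$ going around the cycle, which cannot be simultaneously diagonalized into commutators.

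As a fallback for this hard step, I would use the cohomology ring instead. For a RAAG, $H^*$ is generated in degree $1$ and the kernel of $\Lambda^2H^1\to H^2$ is spanned by decomposable wedges $x\wedge y$ of elements of a basis of $H^1$. The cycle would produce a kernel element that is a sum of $m$ wedges and is not of this form. Checking this requires computing $H^2$ from the presentation via Hopf's formula.
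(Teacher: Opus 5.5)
The paper gives no proof of this statement; it quotes Day and Wade, who (as the introduction says) obtain it using Sigma invariants. Measured against that, your proposal has a genuine gap in the ``only if'' direction, exactly at the step you flag.

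Being a RAAG is a property of the group, not of a presentation. So exhibiting a relation of the shape $[xy,z]=1$ is no obstruction: relation (iii) of Theorem~\ref{teo:conmu} always has that shape, and your own ``if'' argument removes it by a change of generators. Restricting to the subgroup generated by the $C^a_{A_i}$ and the $C^{w_i}$, or passing to a quotient, does not help either. Subgroups and quotients of RAAGs need not be RAAGs, so a non-RAAG subgroup contradicts nothing.

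The cohomological fallback has the same flaw. In $A_\Theta$ the kernel of $\Lambda^2H^1\to H^2$ is spanned by wedges of vectors of one particular basis, unknown here, and that kernel contains many elements that are sums of several wedges. You would have to show that \emph{no} basis of $H^1$ makes this kernel spanned by wedges of basis vectors, and you give no means of doing so.

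Likewise, on the $\Sigma^1$ side a single ``missing subspace'' is no obstruction, since RAAGs have many. What is needed is a property of the whole complementary arrangement in $\mathrm{Hom}(G,\mathbb{R})$ with three features: it is invariant under arbitrary linear changes of coordinates, it holds for every RAAG, and it fails as soon as some $\Delta_a$ contains a cycle. Supplying such an invariant is the heart of Day and Wade's argument. They compute $\Sigma^1(\POut(\A))$ explicitly and extract coordinate-free numerical invariants from the complementary subspace arrangement, using a homology theory for arrangements. Without an ingredient of this kind, the ``only if'' direction is not proved.

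Your ``if'' direction follows the right plan and matches the generators of Definition~\ref{def:DayWadeRAAG}. Your branch generators are the Type~1 elements $C^a_{L(e)}$, and your surviving total generators are the Type~2 ones. But its substance is only asserted. You need to show that the relations of Theorem~\ref{teo:conmu}, together with the inner relations, are equivalent to pairwise commutations among the new generators. One way is to define the inverse map $\POut(\A)\to A_\Theta$ on the $C^v_A$ and check that the relations hold in $A_\Theta$.

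Your description of what (iii) becomes is also off. Relation (iii) is misprinted in Theorem~\ref{teo:conmu}. The correct relation, the analogue of McCool's, is $[C^v_AC^v_B,C^w_A]=1$, where $B$ is the component of $\G-\st(v)$ containing $w$. Read literally, the printed version would force $[C^v_A,C^w_A]=1$ in $\POut(\A)$. That is false already when $\G$ is three isolated vertices, where $\POut(\A)$ is free of rank $3$.

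With the correct form, let $T\subseteq\Delta_v$ be the tree containing the edge $(A,B)$. Every vertex of $T$ other than $A,B$ is either shared and distinct from $A$, or subordinate, so relation (ii) lets you absorb it. Hence (iii) is equivalent to $[C^v_T,C^w_A]=1$. This is a centrality statement for a total generator; it is vacuous when $\Delta_v$ is connected, since then $C^v_T$ is inner. It is not a commutation between a branch generator at $v$ and one at $w$. Indeed, the branch generators at $v$ and at $w$ containing $A$ need not commute.
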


For a RAAG $A_\Theta$, there is a well known action that satisfies Hypothesis (*), namely, the action on the Deligne complex. The Deligne complex can be seen as the geometric realization of the coset poset of \emph{special spherical subgroups}, which are the subgroups of $A_\Theta$ generated by \emph{cliques}, i.e., complete subgraphs $\sigma\subseteq\Theta$. This means that the singular subcomplex of the (strong) fundamental domain of the action is the flag complex $\widehat{\Theta}$ of the defining graph $\Theta$. 

Therefore, if $\POut(\A)$ is isomorphic to a RAAG, we could apply \ref{teo:A} either using the action on the Deligne complex, or else using the action on the McCullough-Miller complex. And as a consequence, we see that, for any $n\geq 0$, $|\Who|$ is $(n-1)$-acyclic if and only if $\widehat{\Theta}$ is. Therefore it is natural to ask whether the simplicial complexes $\widehat{\Theta}_\Fsing$ and $|\Who|_\Fsing$ are homotopy equivalent. In this subsection we are going to prove that this is indeed the case. A first observation is that the singular set $\widehat{\Theta}_\Fsing$ of the flag complex $\widehat{\Theta}$ is homotopy equivalent to the simplicial realization of the poset $\mathcal{P}(\Theta)$ of (non empty) cliques of $\Theta$.

We will need the following explicit description of the defining graph $\Theta$ of the RAAG on Theorem \ref{teo:DayWade}.

\begin{defi}[Definition 5.4, \cite{DayWade}]\label{def:DayWadeRAAG}
    Let $\G$ be a simplicial graph such that for each $a\in\G$ the support graph $\Delta_a$ is a forest. Fix, for each support graph $\Delta_a$:
    \begin{itemize}
    \item a preferred maximal tree  in the forest $\Delta_a$
    \item a base point in each of the maximal trees $\Delta_a$. We will assume that these base points are leaves of the corresponding trees.
    \end{itemize}
    We define now a family of partial conjugations of $A_\Gamma$ and a new graph $\Theta$. For each $a\in\G$ we consider
    \begin{itemize}
        \item[\bf{Type 1:}]  For each maximal tree $T$ in $\Delta_a$ and each edge $e$ of $T$, we have a vertex $v_e^a$ of $\Theta$ and a partial conjugation $C_{L(e)}^a$ where $L(e),L_0$ are such that $T-e=L(e)\cup L_0$ and the base point of $T$ lies in $L_0$.
    
        \item[\bf{Type 2:}] For each maximal tree $T$ of $\Delta_a$ which is not the preferred maximal tree, we have a vertex  $v_C^a$ of $\Theta$ and a partial conjugation $C_T^a$.
    \end{itemize}
    The edges of $\Theta$ are defined so that there is an edge between $x,y\in \Theta$ if and only if the images in $\POut(\A)$ of the associated partial conjugations commute.
\end{defi}

The RAAG of Theorem \ref{teo:DayWade} is precisely the RAAG with defining graph the graph $\Theta$ constructed in Definition \ref{def:DayWadeRAAG}. In what follows, we assume that $\G$ is a graph such that for any $a\in\G$ the support graph $\Delta_a$ is a forest and identify the groups $\POut(\A)$ and $A_\Theta$. We will use the following observation about special spherical subgroups in RAAGs.

\begin{lem}\label{lem:commuting} Let $A_\Theta$ be a \textup{RAAG} with free abelian subgroups $H_1,H_2\leq A_\Theta$ such that $[H_1,H_2]=1$ and both $H_1$ and $H_2$ lie inside special spherical subgroups. Let $\sigma_1,\sigma_2\subseteq\Theta$, be the smallest cliques such that $H_1$ resp. $H_2$ lie inside the special subgroup $A_1=A_{\sigma_1}$ resp. $A_2=A_{\sigma_2}$. Then 
$$[A_1,A_2]=1$$
\end{lem}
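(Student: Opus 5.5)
The plan is to reduce everything to the standard description of centralizers in RAAGs and to the fact that parabolic subgroups are closed under intersection. First I will argue that the smallest cliques $\sigma_1,\sigma_2$ are well defined. For any $H\leq A_\Theta$ contained in some special spherical subgroup, the set of cliques $\sigma$ with $H\leq A_\sigma$ is nonempty. This set is closed under intersection, because $A_\sigma\cap A_\tau=A_{\sigma\cap\tau}$ for special subgroups of RAAGs. Hence there is a unique minimal such clique, which I denote $\sigma(H)$. It consists of exactly the generators $x\in\Theta$ that appear in the normal forms (supports) of the elements of $H$.

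The key step is the following claim: if $h\in A_{\sigma_1}$ has support $\mathrm{supp}(h)\subseteq\sigma_1$, and $g\in A_{\sigma_2}$ commutes with $h$, then every $x\in\mathrm{supp}(h)$ and every $y\in\mathrm{supp}(g)$ with $x\neq y$ are linked in $\Theta$. Since $A_{\sigma_1}$ and $A_{\sigma_2}$ are free abelian, the elements are words $h=\prod_{x\in\sigma_1}x^{m_x}$ and $g=\prod_{y\in\sigma_2}y^{n_y}$. I would prove the claim using the retraction $A_\Theta\to A_{\{x,y\}}$ that kills all other generators. If $x\notin\st(y)$, then $A_{\{x,y\}}$ is free on $x,y$. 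The images $x^{m_x}$ and $y^{n_y}$ are both nontrivial and commute in this free group, which is impossible since they are powers of distinct free generators. Hence $x$ and $y$ are linked.

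Now I apply the claim to the whole groups. By minimality, every $x\in\sigma_1$ appears with nonzero exponent in some $h\in H_1$, and every $y\in\sigma_2$ appears in some $g\in H_2$. Since $[H_1,H_2]=1$, the retraction argument applied to such $h$ and $g$ shows that $x,y$ are linked whenever $x\neq y$. The case $x=y$ is trivial. Therefore every generator of $A_1$ commutes with every generator of $A_2$, which gives $[A_1,A_2]=1$. Equivalently, $\sigma_1\cup\sigma_2$ is a clique.

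I expect the main obstacle to be justifying the structural facts about special subgroups: the identity $A_\sigma\cap A_\tau=A_{\sigma\cap\tau}$, and the fact that the retraction $A_\Theta\to A_{\{x,y\}}$ is a well-defined homomorphism preserving exponents. Both are standard for RAAGs; the retraction is well defined because killing generators respects the commutation relations. It is also worth stressing that no conjugation issue arises: the hypothesis places $H_i$ literally inside special spherical subgroups, not merely inside conjugates of them, so the exponent bookkeeping above is legitimate.
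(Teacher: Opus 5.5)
Your proof is correct and has the same skeleton as the paper's. In both, supports of elements of $H_i$ are read off from the abelian normal form in $A_{\sigma_i}$. Minimality then forces every vertex of $\sigma_i$ into the support of some element of $H_i$. Finally, one shows that commuting elements have pairwise linked supports.

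The difference is in that last step. The paper gets it from Servatius' description of centralizers of cyclically reduced elements in a RAAG. You instead retract onto $A_{\{x,y\}}$ and use the fact that nontrivial powers of two distinct free generators do not commute. Your route is more elementary and self-contained: it only needs that killing generators defines a homomorphism. The paper's route is a one-line citation, but it invokes a structure theorem much stronger than what is needed for elements supported on cliques.

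One detail deserves an explicit sentence in your write-up. You use $\rho(h)=x^{m_x}$ and $\rho(g)=y^{n_y}$. These hold because, once you assume $x\neq y$ and $x\notin\st(y)$, the fact that $\sigma_1$ and $\sigma_2$ are cliques forces $y\notin\sigma_1$ and $x\notin\sigma_2$. Without that observation, $\rho(h)$ could a priori carry an extra factor $y^{m_y}$.
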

\begin{proof}
    Put $\sigma_1=\{v_1,\ldots,v_t\}$  and $\sigma_2=\{w_1,\ldots,w_s\}$. For each $g\in H_1$ we have a cyclically reduced expression
$$g=v_1^{r_1}\ldots v_t^{r_t}$$
and similarly for any element $h\in H_2$. As $g$ and $h$ commute, using the description of centralizers of elements given by Servatius in \cite{Servatius}, we deduce that the elements of $\sigma_1$ in the support of $g$ commute with the elements of $\sigma_2$ in the support of $h$. The minimality of $\sigma_1$ and $\sigma_2$ implies that every element of $\sigma_1$ is in the support of some $g\in H_1$ and the same for $\sigma_2$ so we are done.
\end{proof}

 Next, we define a poset map $\phi:\mathcal{P}(\Theta)\to\Who$ and check that it induces a homotopy equivalence. To do that we set, using the notation of Definition \ref{def:DayWadeRAAG}:
 \begin{itemize}
     \item For each $v_e^a\in\Theta$ of Type 1, $\tau(v_e^a)\in\Who$ is the vertex type whose unique non-trivial based partition is
    $$\tau(v_e^a)_a=\{\{a\},L(e),(\G-\st(a))-L(e)\}.$$

     \item For each $v_C^a\in\Theta$ of Type 2, $\tau(v_C^a)\in\Who$ is the vertex type whose unique non-trivial based partition is 
     $$\tau(v_C^a)_a=\{\{a\},C,(\G-\st(a))-C\}.$$ 
 \end{itemize}


    Given a clique $\sigma=\{x_0,\ldots,x_k\}\in\mathcal{P}(\Theta)$, \cite[Proposition 5.5]{DayWade} implies that the corresponding partial conjugations commute pairwise, which by \cite[Lemma 5.4]{ArMar} implies that the vertex types $\tau(x_0),\ldots,\tau(x_k)$ defined above are pairwise compatible. As a consequence (\cite[Lemma 5.6]{ArMar}), there is a least upper bound $\tau(x_0)\vee\cdots\vee \tau(x_k)$. We set 
    $$\phi(\sigma)=\tau(x_0)\vee\cdots\vee \tau(x_k),$$ so we have a map
    $$
\phi: \mathcal{P}(\Theta)\to \Wh_\G^0
    $$
    which is obviously a poset map.

    \begin{lem}\label{lem:CD} Let $\mathcal{H}=\{\Stab(\tau)\ |\ \tau\in\Who\}$ be the set of stabilizers of all the possible vertex types in $\Who$. There is a commutative diagram
   \[
  \begin{tikzcd}
     \mathcal{P}(\Theta) \arrow{r}{\phi} \arrow[swap]{dr}{\alpha} & \Who
     \arrow{d}{\pi} \\
     & \mathcal{H}
  \end{tikzcd}
\]
where $\pi(\tau)=\Stab(\tau)$ and $\alpha(\sigma)$ is the special subgroup of $\POut(A_\G)=A_\Theta$ generated by $\sigma$.
    \end{lem}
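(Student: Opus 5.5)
The plan is to show that for every clique $\sigma=\{x_0,\ldots,x_k\}\in\mathcal{P}(\Theta)$ we have $\Stab(\phi(\sigma))=\alpha(\sigma)$, where $\alpha(\sigma)=\langle x_0,\ldots,x_k\rangle\leq A_\Theta=\POut(\A)$. I would do this in two stages: first for a single vertex $x$ of $\Theta$, then for an arbitrary clique using the description of least upper bounds in Lemma \ref{lem:lub}.

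\textbf{Single vertex.} Take $x=v^a_e$ of Type 1. The vertex type $\tau(v^a_e)$ has a unique nontrivial based partition, at $a$, with two petals $L(e)$ and $L_0':=(\G-\st(a))-L(e)$. Hence
\[\Stab(\tau(v^a_e))=\langle C^a_{L(e)},C^a_{L_0'}\rangle.\]
In $\POut(\A)$ the product $C^a_{L(e)}C^a_{L_0'}$ is the conjugation by $a$ of all of $\G-\st(a)$, and this is inner, since $a$ commutes with $\st(a)$. Thus $C^a_{L_0'}=(C^a_{L(e)})^{-1}$ in $\POut(\A)$, and the stabilizer is the infinite cyclic group generated by $C^a_{L(e)}$. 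Under the identification $\POut(\A)=A_\Theta$ this generator is $v^a_e$, so the stabilizer equals $\alpha(\{v^a_e\})$. Type 2 vertices $v^a_C$ are handled identically, with $C$ in place of $L(e)$; here $C$ is a union of components because $T$ is a maximal tree of $\Delta_a$.

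\textbf{General clique.} By Lemma \ref{lem:lub}, the based partition of $\phi(\sigma)$ at a vertex $a$ is the common refinement of the two-petal partitions $\tau(x_i)_a$ over those $x_i$ based at $a$. Its stabilizer is generated by the partial conjugations of the petals, i.e. of the atoms of this refinement. The inclusion $\Stab(\tau(x_i))\leq\Stab(\phi(\sigma))$ holds because $\tau(x_i)\leq\phi(\sigma)$ and each petal of the coarser partition is a union of petals of the finer one (using $C^a_{U\cup V}=C^a_UC^a_V$). This gives $\alpha(\sigma)\leq\Stab(\phi(\sigma))$.

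For the reverse inclusion I must show that each atom of the refinement is, modulo inner automorphisms, a product of the $C^a_{L(e)}$ and $C^a_C$ for $x_i$ in $\sigma$ based at $a$. Each such generator is a partial conjugation by a union of components lying in the Boolean algebra generated by the sets $L(e)$ and $C$ coming from $\sigma$, and the whole set $\G-\st(a)$ contributes an inner automorphism. I would then argue that any atom of this Boolean algebra is obtained from the generating sets and $\G-\st(a)$ by differences and disjoint unions, using the tree structure. The key fact is that the sets $L(e)$ for edges $e$ of one rooted tree $T$ are nested or disjoint, with $L(e')\subseteq L(e)$ when $e'$ lies below $e$, and that different trees give disjoint sets. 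So each atom is either $L(e)$ minus the union of the maximal $L(e')$ strictly inside it, or the complement of all the chosen sets. The corresponding partial conjugation is therefore a product of generators and their inverses, where the complement case uses the inner automorphism. This gives $\Stab(\phi(\sigma))\leq\alpha(\sigma)$ and commutativity of the diagram.

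The main obstacle is this last Boolean-algebra step. The Type 2 sets $C=T$ (whole nonpreferred trees) together with Type 1 sets in the same tree must be shown to be laminar. The set $L_0$ of the preferred tree is never a generator, and must be recovered only via the inner automorphism. If laminarity fails, I would instead fall back on comparing ranks: both groups are free abelian of rank $|\sigma|$, since $\mathrm{rk}(\phi(\sigma))$ counts the petals minus one, and the petal count equals $|\sigma|$ plus one per vertex for a laminar family. With equal ranks and $\alpha(\sigma)\leq\Stab(\phi(\sigma))$, I would then need a primitivity argument to upgrade the inclusion to equality.
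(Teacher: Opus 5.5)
Your argument is correct and uses the same mechanism as the paper's proof. The sets $L(e)$ and $T$ attached to the vertices of $\sigma$ based at $a$ form a laminar family, so each petal of $\phi(\sigma)_a$ is one of these sets minus the maximal ones strictly inside it, or $\G-\st(a)$ minus all of them; the identity $C^a_{P-L}=C^a_P(C^a_L)^{-1}$, together with the triviality of $C^a_{\G-\st(a)}$ in $\POut(\A)$, then puts every petal conjugation in $\alpha(\sigma)$. The paper organizes this as an induction on $|\sigma|$: it separates vertices with different base points via Step 1(i) and peels off a minimal set via Step 1(ii), whereas you describe all petals at once.

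The laminarity you flag as the main obstacle is immediate, since each $L(e)$ with $e\in T$ lies inside $T$ and sets coming from different trees of $\Delta_a$ are disjoint. So your rank/primitivity fallback is never needed.
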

    \begin{proof}  We have to check that for any clique $\sigma=\{x_0,...,x_k\}\in\mathcal{P}_0(\Theta)$, the subgroup 
    \[\Stab(\phi(\sigma))=\Stab(\tau(x_0)\vee\ldots\vee\tau(x_k))\]
    is the special subgroup generated by $\sigma$. We argue by induction on $k$. 
    
    If $k=0$, the result is obvious by definition. The inductive step would also be obvious if we could prove that the group $\Stab(\tau\vee\tau')$ is generated by $\Stab(\tau)$ and $\Stab(\tau')$ for each pair $\tau,\tau'$ of compatible vertex types. This is not true in general, however, it is true in some cases. We divide the proof in several steps.
    
\smallskip
    \noindent\underline{Step 1}: Let $\tau,\tau'$ be compatible vertex types. Then $\Stab(\tau\vee\tau')=\langle\Stab(\tau),\Stab(\tau')\rangle$ if
    \begin{itemize}
        \item[(i)] either the sets $\{a\in\G\mid\tau_a\text{ is not trivial}\}$ and $\{a\in\G\mid\tau'_a\text{ is not trivial}\}$
        are disjoint,
        
\item[(ii)] or $\tau,\tau'$ have only one non trivial based partition at, say $a\in G$, and these based partitions are of the form
\[\begin{aligned}
    \tau_a&=\{\{\underline{a}\},L_1,L_2\},\\
    \tau'_a&=\{\{\underline{a}\},P_1,\ldots,P_t\}\\
\end{aligned}\]
with $L_1\subseteq P_1$ (thus $P_2,\ldots,P_t\subseteq L_0$).
        \end{itemize}
Item (i) of Step 1 follows from the fact that by definition, $\Stab(\tau\vee\tau')$ is generated by the groups $\Stab((\tau\vee\tau')_a)$ for $a\in\G$ and (i)  implies that the based partitions of $\tau\vee\tau'$ coincide with those of either $\tau$ or $\tau'$. 
For (ii), note that the only non trivial based partition of $\tau\vee\tau'$ is
$$(\tau\vee\tau')_a=\{\{\underline{a},L_1,P_1-L_1,\ldots,P_t\}\}.$$
As $C^a_{P_1-L_1}=C^a_{P_1}(C^a_{L_1})^{-1}$, we see that $C^a_{P_1-L_1}$ lies in the subgroup generated by $\Stab(\tau)$ and $\Stab(\tau')$, so the claim follows. 

 \smallskip
    \noindent\underline{Step 2}: We may assume that  for all the vertex types in the family $\tau(x_0),\ldots,\tau(x_k)$ the only non trivial based partition is based at the same $a\in\G$. To see it, note that in other case we could split $\sigma=\sigma_1\sqcup\sigma_2$ so that the based partitions of $\phi(\sigma_1)$ and $\phi(\sigma_2)$ are as in (i) of Step 1 so we would have
    \[\Stab(\phi(\sigma))=\langle\Stab(\phi(\sigma_1)),\Stab(\phi(\sigma_2))\rangle\]
    and by the induction hypothesis we would get the result.
    
    Observe moreover that by construction, the non trivial based partitions in the vertex types $\tau(x_0),\ldots,\tau(x_k)$ are all of the form $\tau(x_i)_a=\{\{\underline{a},L_1^i,L_2^i\}\}$ where with the notation of definition \ref{def:DayWadeRAAG}, $L_1^i$ is either  what we have called $L(e)$ or one of the trees of the forest $\Delta_a$. 

\smallskip
    \noindent\underline{Step 3}: With the same notation as in Step 2, we may reorder the elements $x_0,...,x_k$ and the petals of the based partitions $\tau(x_0)_a,\ldots,\tau(x_k)_a$ so that $L_1^0\subseteq L_1^1,L_1^2,\ldots,L_1^k$. To show this we consider several possible cases.
\begin{itemize}
    \item[\bf{3.1.}] Assume first that all of $x_0,\ldots,x_k$ are of type  $v^a_{L(e_i)}$ and all the edges $e_0,\ldots,e_k$ lie in the same maximal tree $T\subseteq\Delta_a$. The fact that the base point of $T$ is a leave implies that the family of subtrees $L(e_i)$ is partially ordered by containment, in fact for each $i,j$ we have either $L(e_i)\subseteq L(e_j)$, or $L(e_j)\subseteq L(e_i)$, or they are disjoint. Therefore we may reorder $x_0,\ldots,x_k$ so that $L(e_0)$ is minimal. This implies that for the others $L(e_i)$, $i\neq 0$, we have either $L(e_0)\subseteq L(e_i)$ or $L(e_0)\subseteq\G-\st(a)-L(e_i)$ and the claim follows upon a possible reorder of the petals in each $\tau(x_i)_a$ for $i=1,\ldots,k$. 

\item[\bf{3.2.}] If all of $x_0,\ldots,x_k$  are of type 2 we may choose any ordering of the elements $x_0,...,x_k$ and set
\[\tau(x_0)_a=\{a,T_0,(\G-\st(a))-T_0\},\]
\[\tau(x_i)_a=\{a,(\G-\st(a))-T_i,T_i\},\text{ for }i=2,\ldots,k\]
where $T_0,\ldots,T_k$ are trees in the forest $\Delta_a$ different from the preferred tree.

\item[\bf{3.3.}] Finally, in the remaining cases, we only have to choose a maximal tree $T\subseteq\Delta_a$ such that there are some $x_i$'s of type 1 of the form $v^a_{L(e)}$ for $e\in T$, order the set $x_0,...,x_k$ so that those comes first and ordered as in 3.1 and then order the petals of the remaining $x_i$'s so that the petal containing $T$ comes first. This shows the claim.
\end{itemize}

\smallskip
    \noindent\underline{Step 4}: End of the proof. We assume that $x_0,\ldots,x_k$ and the petals of each $\tau(x_i)_a$ are ordered as in Step 3. Let 
\[\tau'=\tau(x_1)\vee\ldots\vee\tau(x_k)=\phi(\{x_1,\ldots,x_k\}).\]
By induction, $\Stab(\tau')=\pi\phi(\{x_1,\ldots,x_k\})=\langle x_1,\ldots,x_k\rangle$. And, as $L_1^0\subseteq\cap_{i=1}^kL_1^i$, item (ii) in Step 1 implies
\[\Stab(\phi(\sigma))=\Stab(\tau(x_0)\vee\tau')=\langle\Stab(\tau(x_0)),\Stab(\tau')\rangle=\langle x_0,x_1,\ldots,x_k\rangle=\alpha(\sigma).\]

    \end{proof}

We can now show Theorem \ref{teo:homotopWhi-RAAG}.
    
\bigskip


\noindent{\sl Proof of Theorem \ref{teo:homotopWhi-RAAG}.} Consider the poset map
\[\phi:\mathcal{P}(\Theta)\to\Who\]
constructed above. We will show that 
 for any vertex type 
$\tau\in\Who$, the geometric realization of the set
\[\phi^{-1}(\tau_\leq)=\{\sigma\in\mathcal{P}_0(\Theta)\ |\ \tau\leq\phi(\sigma)\}\]
is contractible.  Using Quillen's poset Theorem (cf. Proposition 1.6, \cite{Q}) this will imply that $\phi$ induces a homotopy equivalence between the geometric realizations. 

To do that, we claim first that for any $\tau$, $\phi^{-1}(\tau_\leq)$ is intersection closed so whenever it is not empty, it has a smallest element and therefore its geometric realization $|\phi^{-1}(\tau_\leq)|$ is contractible. Let   $\sigma_1,\sigma_2\in\phi^{-1}(\tau_\leq)$ be cliques, then $\tau\leq\phi(\sigma_1),\phi(\sigma_2)$ so the commutativity of the diagram in Lemma \ref{lem:CD} implies
\[\Stab(\tau)\leq \Stab(\phi(\sigma_1))\cap\Stab(\phi(\sigma_2))=A_{\sigma_1}\cap A_{\sigma_2}.\]
Any intersection of special subgroups is also special so we have
\[A_{\sigma_1}\cap A_{\sigma_2}=A_{\sigma_1\cap\sigma_2}\]
and using again the commutativity of the diagram in Lemma \ref{lem:CD}, 
\[\Stab(\tau)\leq A_{\sigma_1\cap\sigma_2}=\Stab(\phi(\sigma_1\cap\sigma_2)).\]
Then, Remark \ref{rem:stab} implies
\[\tau\leq \phi(\sigma_1\cap\sigma_2)\]
thus $\sigma_1\cap\sigma_2\in\phi^{-1}(\tau_\leq)$.

This means that to prove the Theorem, it only remains to see that for any vertex type $\tau\in\Who$, the set $\phi^{-1}(\tau_\leq)$ is not empty.

We consider first the case of a vertex type with a unique non-trivial based partition $\tau_a$ for some $a\in\Gamma$. We will also use $\tau_a$ to denote the vertex type. Let $\sigma_a$ be the clique in $\Theta$ consisting of all the elements of the form either $v^a_e$ or $v^a_C$ in Definition \ref{def:DayWadeRAAG}. Then, $\phi(\sigma_a)=\nu_a$ where $\nu_a$ has only one non trivial based partition that we again denote as the vertex type and one easily checks that 
\[\nu_a=\{\{a\},P_1,\ldots,P_s\}\]
where the $P_i$ are all the components of $\G-\st(a)$. As $\nu_a$ is an upper bound for all the partitions based at $a$, this implies $\tau_a\leq\phi(\sigma_a)$. 
So we have $\sigma_a\in\phi^{-1}((\tau_a)_\leq)$ and we have seen before that this set has a smallest element that we call $\sigma(\tau_a)$, i.e., $\sigma(\tau_a)$ is the smallest clique $\sigma(\tau_a)\subseteq\Theta$ such that $\tau_a\leq\phi(\sigma(\tau_a))$. We have $\Stab(\tau_a)\leq\Stab(\phi(\sigma(\tau_a)))$ and the commutativity of the diagram in Lemma \ref{lem:CD} implies that  
\begin{equation}\label{eq:stabilizadores}\Stab(\tau_a)\leq\Stab(\phi(\sigma(\tau_a)))=\langle\sigma(\tau_a)\rangle=A_{\sigma(\tau_a)}\end{equation} so from the minimality of $\sigma(\tau_a)$ we deduce that $A_{\sigma(\tau_a)}$ is the smallest special spherical subgroup of the RAAG $A_\Theta=\POut(A_\G)$ that contains $\Stab(\tau_a)$.

For the general case, let $\tau$ be an arbitrary vertex type and $\tau_{a_1},\ldots,\tau_{a_k}$ its set of non trivial based partitions. We may identify each $\tau_{a_i}$ with the vertex type having $\tau_{a_i}$ as its only non trivial based partition and then
\[\tau=\tau_{a_1}\vee\cdots\vee\tau_{a_k}.\]
For each $a_i$, consider the clique $\sigma(\tau_{a_i})\subseteq\Theta$ so that $\tau_{a_i}\leq\phi(\sigma(\tau_{a_i}))$ and $\sigma(\tau_{a_i})$ is smallest possible. Then
$\Stab(\tau_{a_i})\leq A_{\sigma(\tau_{a_i})}$.

As the vertex types $\tau_{a_1},\ldots,\tau_{a_k}$ are pairwise compatible, the subgroups $\Stab(\tau_{a_1}),\ldots,\Stab(\tau_{a_k})$ of $A_\Theta=\POut(A_\G)$ commute pairwise so iterating Lemma \ref{lem:commuting} we deduce that the special subgroups \[A_{\sigma(\tau_{a_1})},\ldots,A_{\sigma(\tau_{a_k})}\] also do. This implies that the (disjoint) union
\[\sigma(\tau)=\sigma(\tau_{a_1})\cup\ldots\cup\sigma(\tau_{a_k})\] is also a simplex of $\Theta$ and by definition of $\phi$
\[\phi(\sigma(\tau))=\phi(\sigma(\tau_{a_1}))\vee\ldots\vee\phi(\sigma(\tau_{a_k})).\] 
Therefore
\[\tau=\tau_{a_1}\vee\cdots\vee\tau_{a_k}\leq\phi(\sigma(\tau_{a_1}))\vee\cdots\vee\phi(\sigma(\tau_{a_k}))=\phi(\sigma(\tau))\]
and we are done.\qed

\section{Simple Complexes of groups}\label{sec:simplecomplexes}

\begin{defi}
A \emph{small category without loops} (\emph{scwol}) $\mathcal{X}$ consists of a vertex set $V(\mathcal{X})$ and an edge set $E(\mathcal{X})$, together with three maps. Two of these maps, $i:E(\mathcal{X})\to V(\mathcal{X})$ and $t:E(\mathcal{X})\to V(\mathcal{X})$, assign to each $\alpha\in E(\mathcal{X})$ an \emph{initial vertex} $i(\alpha)$ and a \emph{terminal vertex} $t(\alpha)$. The third map is a composition $\circ: E^{(2)}(\mathcal{X})\to E(\mathcal{X})$,
where $E^{(2)}(\mathcal{X})$ denotes the set of pairs $(\alpha,\beta)\in E(\mathcal{X})\times E(\mathcal{X})$ such that $i(\alpha)=t(\beta)$. These sets and maps satisfy:
\begin{enumerate}
    \item[(i)] For all $(\alpha,\beta)\in E^{(2)}(\mathcal{X})$, we have $i(\alpha\circ\beta)=i(\beta),\quad t(\alpha\circ\beta)=t(\alpha)$.
    \item[(ii)] For all $\alpha,\beta,\gamma\in E(\mathcal{X})$ such that $i(\alpha)=t(\beta)$ and $i(\beta)=t(\gamma)$, we have $(\alpha\circ\beta)\circ\gamma=\alpha\circ(\beta\circ\gamma)$.
    \item[(iii)] For each $\alpha\in E(\mathcal{X})$, $i(\alpha)\neq t(\alpha)$.
\end{enumerate}
\end{defi}

The most important example of a scwol for our purposes comes from a poset:
\begin{ej}
Let $\mathcal{P}$ be a poset. It defines a scwol $\mathcal{X}$ as follows:
\begin{itemize}
    \item The vertex set is $V(\mathcal{X})=\mathcal{P}$.
    \item The edge set is $E(\mathcal{X})=\{(\sigma,\tau)\in \mathcal{P}\times \mathcal{P} \mid \tau<\sigma\}$.
    \item For an edge $(\sigma,\tau)$, we set $i(\sigma,\tau)=\tau$ and $t(\sigma,\tau)=\sigma$. 
    \item If $\tau<\sigma<\rho$, the composition is $(\rho,\sigma)\circ(\sigma,\tau)=(\rho,\tau)$.
\end{itemize}
\end{ej}

\begin{defi}
Let $\mathcal{X}$ be a scwol. A \emph{simple complex of groups} $\mathcal{G}(\mathcal{X})=(G_v,\psi_\alpha)$ over $\mathcal{X}$ consists of:
\begin{enumerate}
    \item[(i)] For each $v\in V(\mathcal{X})$, a group $G_v$, called the \emph{local group at $v$}.
    \item[(ii)] For each $\alpha\in E(\mathcal{X})$, an injective homomorphism 
    $$\psi_\alpha:G_{i(\alpha)}\to G_{t(\alpha)}$$
    such that whenever defined, $\psi_{\alpha\beta}=\psi_\alpha\circ\psi_\beta$.
\end{enumerate}
\end{defi}

\begin{defi}
Let $\mathcal{G}(\mathcal{X})$ be a simple complex of groups and $G$ a group. A morphism $\phi=(\phi_v,\phi(\alpha)):\mathcal{G}(\mathcal{X})\to G$ consists of:
\begin{enumerate}
    \item[(i)] A group homomorphism $\phi_v:G_v\to G$ for each $v\in V(\mathcal{X})$.
    \item[(ii)] An element $\phi(\alpha)\in G$ for each $\alpha\in E(\mathcal{X})$ such that
    $$\mathrm{Ad}(\phi(\alpha))\circ \phi_{i(\alpha)} = \phi_{t(\alpha)} \circ \psi_\alpha,\quad 
    \phi(\alpha\beta)=\phi(\alpha)\phi(\beta),$$
    where $\mathrm{Ad}(\phi(\alpha))$ denotes conjugation by $\phi(\alpha)$.
\end{enumerate}
\end{defi}

\begin{defi}
A complex of groups $\mathcal{G}(\mathcal{X})$ over a scwol $\mathcal{X}$ is called \emph{developable} if there exists a group $G$ and a morphism $\phi:\mathcal{G}(\mathcal{X})\to G$ that is injective on all local groups.
\end{defi}

This definition is equivalent to the original one (\cite[Corollary III.$\mathcal{C}$.2.15]{Bridson-Hafliger}), and it is usually easier to verify in examples.

\medskip

For every scwol $\mathcal{X}$, one can consider its geometric realization $\lvert \mathcal{X} \rvert$. If $\mathcal{X}$ has no multiple edges, this construction coincides with the geometric realization of a simplicial complex. In particular, if $\mathcal{X}$ is induced by a poset, then its geometric realization agrees with the usual geometric realization of the poset. The general construction can be found in \cite[Chapter III.C.1]{Bridson-Hafliger}.

For the next definition we will assume that the scwol $\mathcal{X}$ is connected (i.e., there is only one equivalence class in $V(\mathcal{X})$ under the relation generated by $i(\alpha)\sim t(\alpha)$ for $\alpha\in E(\mathcal{X})$). 

\begin{defi}
Let $\mathcal{G}(\mathcal{X})$ be a simple complex of groups over a connected scwol $\mathcal{X}$. Assume that each $G_v=\langle S_v\mid R_v\rangle$ is finitely presented. Choose a maximal tree $T$ in the 1-skeleton $\lvert\mathcal{X}\rvert^{(1)}$ of the geometric realization of $\mathcal{X}$. Let $E(\mathcal{X})^\pm = \{\alpha^+,\alpha^- \mid \alpha \in E(\mathcal{X})\}$. Then the fundamental group $\pi_1(\mathcal{G}(\mathcal{X}),T)$ is generated by
$$\left(\bigsqcup_{v\in V(\mathcal{X})} S_v\right) \sqcup E(\mathcal{X})^\pm,$$
subject to the relations:
\begin{enumerate}
    \item[(i)] $R_v$ for all $v\in V(\mathcal{X})$.
    \item[(ii)] $(\alpha^+)^{-1}=\alpha^-$ for all $\alpha\in E(\mathcal{X})$.
    \item[(iii)] $(\alpha\beta)^+=\alpha^+\beta^+$ for all $(\alpha,\beta)\in E^{(2)}(\mathcal{X})$.
    \item[(iv)] $\psi_\alpha(s) = \alpha^+ s \alpha^-$ for all $\alpha\in E(\mathcal{X}), s\in S_{i(\alpha)}$.
    \item[(v)] $\alpha^+=1$ for all $\alpha\in E(T)$.
\end{enumerate}
\end{defi}

The fundamental group does not depend on the choice of $T$ (\cite[Theorem III.$\mathcal{C}$.3.7]{Bridson-Hafliger}), so we denote it simply as $\pi_1(\mathcal{G}(\mathcal{X}))$.

\medskip

Assume $\mathcal{G}(\mathcal{X})$ is developable. Then there is an action of $G=\pi_1(\mathcal{G}(\mathcal{X}))$ on the space
$$
\widetilde{\mathcal{X}} = (G \times \lvert\mathcal{X}\rvert)/\sim,
$$
where $(g,x)\sim(g',x')$ if and only if $x=x'$ and $gG_{\sigma(x)} = g'G_{\sigma(x)}$. Here $\sigma(x)$ denotes the unique minimal cell of $|\mathcal{X}|$ containing $x$. We denote by $[g,x]$ the equivalence class of $(g,x)$ and then the action of $G$ is
$$
h\cdot [g,x] = [hg,x] \quad \text{for all } h,g\in G,\, x\in \lvert\mathcal{X}\rvert.
$$
Cell stabilizers are of the form $gG_v g^{-1}$ for $g\in G$ and $v\in V(\mathcal{X})$. A strong fundamental domain is
$$
W = \{ [1,x] \mid x \in \lvert \mathcal{X}\rvert \},
$$
which is isomorphic to $\lvert \mathcal{X} \rvert$. Observe that $\mathcal{X} = \widetilde{\mathcal{X}}/G$, so the action is cocompact if $\mathcal{X}$ is finite, and it is obviously admissible. 

\begin{teo}\label{theorem:SimpleComplexGroups}
Assume that $\mathcal{X}$ is a finite scwol and that $\mathcal{G}(\mathcal{X})$ is a developable simple complex of groups with each $G_v$ of type $\mathrm{FP}_\infty$. Let $G = \pi_1(\mathcal{G}(\mathcal{X}))$, and assume that both $\lvert \mathcal{X} \rvert$ and $\lvert \widetilde{\mathcal{X}} \rvert$ are contractible. 
 Then, Hypothesis (*) holds for the action of $G$ on $\lvert \widetilde{\mathcal{X}} \rvert$ with fundamental domain  $\lvert \mathcal{X} \rvert$. 
\end{teo}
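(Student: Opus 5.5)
The plan is to verify the ingredients of Hypothesis (*) one by one for the action $G\curvearrowright|\widetilde{\mathcal{X}}|$, using the explicit description of the development given just before the statement. Contractibility of $X=|\widetilde{\mathcal{X}}|$ and of $W\cong|\mathcal{X}|$ is assumed, so the work is in the remaining properties: the $CW$-structure, the strong fundamental domain property, admissibility, cocompactness and the finiteness of stabilizers.

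First I will set up the cell structure. Since $\mathcal{X}$ comes from a scwol, $|\mathcal{X}|$ is a simplicial (or at least polyhedral) complex whose cells are indexed by chains of composable edges. I will give $|\widetilde{\mathcal{X}}|$ the $CW$-structure whose cells are the images $[g,\sigma]$ of $\{g\}\times\sigma$ for cells $\sigma$ of $|\mathcal{X}|$. The relation $\sim$ only identifies points of $\{g\}\times\sigma$ and $\{g'\}\times\sigma$ lying over the same point $x$. Moreover, for a simplex $\sigma$ corresponding to a chain $v_0<\dots<v_k$, the local group attached to its interior points is that of the minimal vertex, i.e. $G_{i(\text{chain})}$, embedded via the $\psi$'s. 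Hence each such image is a genuine cell, and $G$ permutes cells.

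Next I check the strong fundamental domain property. A cell $[g,\sigma]$ is in the orbit of $[1,\sigma]$, so every orbit meets $W$. If $[1,\sigma]$ and $[1,\sigma']$ were in the same orbit, then projecting to $\widetilde{\mathcal{X}}/G=|\mathcal{X}|$ gives $\sigma=\sigma'$. So each orbit has exactly one representative in $W$, and $W\cong|\mathcal{X}|$ because $[1,x]=[1,x']$ forces $x=x'$.

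For admissibility, take $h$ stabilizing the cell $[1,\sigma]$. Then $[h,x]=[1,x']$ for points of $\sigma$ forces $x=x'$ by the definition of $\sim$, so $h$ fixes $\sigma$ pointwise. I will use the fact that $G_{\sigma(x)}$ for $x$ interior to $\sigma$ is a subgroup of $G_{\sigma(y)}$ for $y$ in a face; this is exactly where injectivity of the $\psi_\alpha$ and the convention that a simplex's stabilizer is the local group of its minimal vertex are needed. Then the stabilizer of $[g,\sigma]$ is $gG_vg^{-1}$, where $v$ is the appropriate vertex of $\sigma$, and this is of type $\mathrm{FP}_\infty$ by hypothesis. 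Cocompactness follows because $\mathcal{X}$ is finite, so $X/G=|\mathcal{X}|$ is a finite complex.

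The main obstacle I expect is making the cell structure and admissibility precise for a general scwol, which need not be the realization of a poset and may have multiple edges. Here the minimal cell $\sigma(x)$ and the identification of its local group require the Bridson–Haefliger formalism. I would handle this by citing \cite[Chapter III.C]{Bridson-Hafliger} for the fact that $\widetilde{\mathcal{X}}$ is the geometric realization of a scwol, namely the development, on which $G$ acts without inversions with quotient $\mathcal{X}$. Such an action is automatically admissible on the realization, since $G$ acts on the scwol by morphisms preserving the $i,t$ structure, and an element fixing a vertex chain fixes it vertexwise.
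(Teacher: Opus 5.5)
Your proposal is correct and follows essentially the same route as the paper, which reads off from the explicit description $\widetilde{\mathcal{X}}=(G\times|\mathcal{X}|)/\sim$ that cell stabilizers are the conjugates $gG_vg^{-1}$, that $W=\{[1,x]\}\cong|\mathcal{X}|$ is a strong fundamental domain, that the action is cocompact because $\mathcal{X}$ is finite, and that it is admissible; you simply supply more detail. One small correction: what makes $gG_vg^{-1}\cong G_v$, and hence of type $\mathrm{FP}_\infty$, is that developability makes each local group inject into $\pi_1(\mathcal{G}(\mathcal{X}))$, not the injectivity of the $\psi_\alpha$.
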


This means that if the groups $G_v$, $v \in V(\mathcal{X})$ satisfy the conditions in Lemma \ref{lem:densitycond}, then we can apply 
Theorem~\ref{teo:A}. We will next consider some families of groups for which this is the case.


\subsection{Artin groups satisfying the \texorpdfstring{$K(\pi,1)$}-conjecture}

\begin{defi}
Let $\Gamma$ be a finite simplicial graph equipped with a labeling map $l:E(\Gamma)\to\{2,3,4,\dots\}$.  The \emph{Artin group} associated to $\Gamma$ is
$$A_\Gamma = \left\langle v \in V(\Gamma) \,\middle|\, 
\underbrace{uvuv\cdots}_{l(e)\ \text{letters}} =
\underbrace{vuvu\cdots}_{l(e)\ \text{letters}},\; \text{for all } e=\{u,v\}\in E(\Gamma)
\right\rangle.$$
\end{defi}


Given an Artin group $A_\Gamma$ and an induced subgraph $\Delta \subset \Gamma$, the subgroup of $A_\Gamma$ generated by $\{v\}_{v\in V(\Delta)}$ is naturally isomorphic to $A_\Delta$ (cf. \cite[Chapter II, Lemma 4.11]{VanDerLek}). These subgroups are called \emph{special subgroups}.  For every Artin group $A_\Gamma$, the associated Coxeter group is $W_\Gamma = A_\Gamma \big/ \langle\!\langle v^2 \mid v\in V(\Gamma) \rangle\!\rangle$. We say that $A_\Gamma$ is \emph{spherical} if $W_\Gamma$ is finite. Since finite Coxeter groups can be classified in terms of the graph (c.f. \cite{Coxeter}), this condition can be checked directly from $\Gamma$.

\medskip

Fix an Artin group $A_\Gamma$ and consider the \emph{spherical poset}
$$
\mathcal{P} = \{ A_\Delta \mid \Delta \subset \Gamma \text{ is spherical} \},
$$
with the natural order given by inclusions $A_\Delta \leq A_\Omega$ whenever $\Delta \subset \Omega$. Define the \emph{coset poset}
$$
C\mathcal{P} = \{ g A_\Delta \mid g \in A_\Gamma, A_\Delta \in \mathcal{P} \}.
$$
The geometric realization $C(\mathcal{P})$ of this poset  is called the \emph{Deligne complex}. 
It is conjectured that $C(\mathcal{P})$ is contractible; this is equivalent to the well-known $K(\pi,1)$-conjecture for Artin groups. For a detailed introduction and more info about this conjecture see \cite{Paris}.

\medskip

Let $\mathcal{X}$ be the scwol induced by $\mathcal{P}$. Define a simple complex of groups $\mathcal{G}(\mathcal{X})$ with local groups $A_\Delta \in \mathcal{P}$ and morphisms given by inclusions $A_\Delta \hookrightarrow A_\Omega$ for $\Delta \subset \Omega$. Consider the morphism $\phi:\mathcal{G}(\mathcal{X}) \to A_\Gamma$ given by $\phi_\Delta:A_\Delta \hookrightarrow A_\Gamma$ for each $A_\Delta \in \mathcal{P}$ and $\phi(\alpha) = 1$ for each $\alpha \in E(\mathcal{X})$. Since this is injective on the local groups, $\mathcal{G}(\mathcal{X})$ is developable.

\begin{lem}\label{Artin1} We have an isomorphism $\pi_1(\mathcal{G}(\mathcal{X})) \cong A_\Gamma$ and an equivariant homotopy equivalence $\widetilde{\mathcal{X}} \simeq C(\mathcal{P})$.
\end{lem}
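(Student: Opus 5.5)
The plan has two parts. First I identify the fundamental group directly from the presentation. Then I build an explicit equivariant map from the development to the Deligne complex.

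\textbf{The isomorphism $\pi_1(\mathcal{G}(\mathcal{X}))\cong A_\Gamma$.} The poset $\mathcal{P}$ has a minimal element, the trivial group $A_\emptyset$, and $A_\emptyset\leq A_\Delta$ for every $\Delta$. So the edges $(A_\Delta,A_\emptyset)$ form a maximal tree $T$ in $|\mathcal{X}|^{(1)}$, a star centred at $A_\emptyset$.

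I use relations (iii) and (v) of the presentation. Any edge $\alpha=(A_\Omega,A_\Delta)$ with $\Delta\neq\emptyset$ composes with the tree edge $(A_\Delta,A_\emptyset)$ to give the tree edge $(A_\Omega,A_\emptyset)$. Hence $\alpha^+=1$ for every edge. Relation (iv) then says that the inclusions $A_\Delta\hookrightarrow A_\Omega$ are identifications. Therefore $\pi_1(\mathcal{G}(\mathcal{X}))$ is the colimit of the spherical special subgroups along inclusions.

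Every vertex $v$ gives a spherical subgraph $\{v\}$, and every edge $\{u,v\}$ gives a spherical subgraph of type $I_2(l(e))$. So the colimit has generators $V(\Gamma)$, and the relations include all the Artin relations of $A_\Gamma$. Conversely, every relation of a spherical $A_\Delta$ is a consequence of the Artin relations. Hence the colimit is exactly the standard presentation of $A_\Gamma$. One also checks that the morphism $\phi$ defined above is the induced map realizing this isomorphism.

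\textbf{The homotopy equivalence $\widetilde{\mathcal{X}}\simeq C(\mathcal{P})$.} I identify the development with the coset complex. Under the isomorphism, the cell stabilizers of $\widetilde{\mathcal{X}}$ are the conjugates $gA_\Delta g^{-1}$.

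By definition, $\widetilde{\mathcal{X}}=(A_\Gamma\times|\mathcal{P}|)/\sim$. A simplex $A_{\Delta_0}<\dots<A_{\Delta_k}$ of $|\mathcal{P}|$ has pointwise stabilizer $A_{\Delta_0}$, the minimal element of the chain. Map $[g,x]$, with $x$ in the simplex of that chain, to the point with the same barycentric coordinates in the simplex $gA_{\Delta_0}<\dots<gA_{\Delta_k}$ of $C(\mathcal{P})$. This is well defined because $g$ is only determined modulo $A_{\Delta_0}$, and $gA_{\Delta_0}\subseteq gA_{\Delta_i}$ for every $i$. It is also equivariant.

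Every chain $g_0A_{\Delta_0}<\dots<g_kA_{\Delta_k}$ in $C\mathcal{P}$ can be rewritten with a single representative $g=g_0$, since $g_0\in g_iA_{\Delta_i}$. So every simplex of $C(\mathcal{P})$ is hit. Injectivity on simplices follows from the fact that $gA_\Delta=g'A_{\Delta'}$ forces $\Delta=\Delta'$. This in turn holds because special subgroups are determined by their generating sets, by the van der Lek result cited.

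Thus the map is an equivariant simplicial isomorphism, which is stronger than the homotopy equivalence claimed.

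\textbf{Main obstacle.} I expect the bookkeeping in the second part to be the hardest step: checking that the identification $\sim$ matches the coset chains exactly. This needs the minimal-element-of-the-chain convention for $\sigma(x)$ to be handled correctly, together with the van der Lek injectivity of special subgroups. The first part is routine Tietze manipulation.
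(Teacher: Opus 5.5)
Your proposal is correct and follows the paper's proof: the same star-shaped maximal tree at $A_\emptyset$ kills every edge generator via relations (iii) and (v), so relation (iv) reduces the presentation to that of $A_\Gamma$, and the same map $[g,x]\mapsto \bigl(gA_{\Delta_0}\subset\dots\subset gA_{\Delta_k}\bigr)$ identifies the development with the Deligne complex. Your extra checks fill in what the paper leaves implicit: the stabilizer of a chain is the group of its minimal element, the map is well defined, and it is bijective on simplices. Together these show the map is an equivariant isomorphism, which matches the paper's later claim $\widetilde{\mathcal{X}}\cong C(\mathcal{P})$ in the graph-product section.
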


\begin{proof}
Let $T$ be the maximal tree of $\abs{\mathcal{X}}^{(1)}$ consisting of edges $(A_\Delta,A_\emptyset)$ for each $A_\Delta \in \mathcal{P}$. For an edge $\alpha=(A_\Omega,A_\Delta)$, using relations (iii) and (v) we have
$$
(A_\Omega,A_\Delta)^+ = (A_\Omega,A_\Delta)^+ (A_\Delta,A_\emptyset)^+ = \big( (A_\Omega,A_\Delta)(A_\Delta,A_\emptyset) \big)^+ = (A_\Omega,A_\emptyset)^+ = 1.
$$
Hence, all edges in $E(\mathcal{X})^\pm$ become trivial in the presentation. By relation (iv), for each $\Delta \subset \Omega$ we identify $A_\Delta$ with its image in $A_\Omega$, so the presentation of $\pi_1(\mathcal{G}(\mathcal{X}),T)$ coincides with that of $A_\Gamma$.

The second claim follows from the fact that $\abs{\mathcal{X}}=\abs{\mathcal{P}}$. More explicitely,
each $x \in \abs{\mathcal{X}}$ lies in an open simplex corresponding to a chain of spherical subgraphs $\Delta_1 \subset \dots \subset \Delta_k$. Define
$\Phi:\widetilde{\mathcal{X}} \to C(\mathcal{P})$ by $\Phi([g,x]) = gA_{\Delta_1} \subset \dots \subset gA_{\Delta_k}$. This map yields a homotopy equivalence.
\end{proof}


\begin{teo}\label{theorem:SigmaArtin}
Let $A_\Gamma$ be an Artin group satisfying the $K(\pi,1)$-conjecture. Then the following are equivalent:
\begin{itemize}
    \item[(i)] $\Sigma^n(A_\Gamma,\mathbb{Z})$ is dense in $S(A_\Gamma)$,
    \item[(ii)] $\Sigma^n(A_\Gamma,\mathbb{Z}) \neq \emptyset$,
    \item[(iii)] $\widehat{\Gamma}$ is $(n-1)$-acyclic,
\end{itemize}
where $\widehat{\Gamma}$ denotes the \emph{spherical flag complex} obtained by adding an $(n-1)$-simplex for each spherical subgraph of $\Gamma$ with $n$ vertices.
\end{teo}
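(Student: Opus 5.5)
We will apply Theorem \ref{teo:A} to the action of $A_\Gamma$ on the Deligne complex, using the simple complex of groups $\mathcal{G}(\mathcal{X})$ over the spherical poset $\mathcal{P}$ described above.

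\textbf{Hypothesis (*).} The scwol $\mathcal{X}$ is finite. Its realization $|\mathcal{X}|=|\mathcal{P}|$ is contractible, because $\mathcal{P}$ has the minimal element $A_\emptyset$ and is therefore a cone. By Lemma \ref{Artin1}, $\widetilde{\mathcal{X}}\simeq C(\mathcal{P})$ equivariantly, and $C(\mathcal{P})$ is contractible precisely by the assumption that $A_\Gamma$ satisfies the $K(\pi,1)$-conjecture. Each local group is a spherical Artin group $A_\Delta$, which has a finite $K(\pi,1)$ (Deligne) and so is of type $\mathrm{FP}_\infty$. Theorem \ref{theorem:SimpleComplexGroups} then gives Hypothesis (*) with strong fundamental domain $W=|\mathcal{P}|$.

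\textbf{The singular subcomplex.} The stabilizer of a cell corresponding to a chain $A_{\Delta_1}<\cdots<A_{\Delta_k}$ is its minimal element $A_{\Delta_1}$. This is trivial exactly when $\Delta_1=\emptyset$. Hence $W_\Fsing$ is the realization of the poset of nonempty spherical subgraphs, which is the barycentric subdivision of $\widehat{\Gamma}$. In particular $W_\Fsing\simeq\widehat{\Gamma}$.

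\textbf{Density of the $\Omega_\sigma$.} For $\Delta\neq\emptyset$ spherical, we use the Garside element. Some power $\Delta^2$ of the Garside element (or its square) generates an infinite cyclic subgroup of $Z(A_\Delta)$. Its image in $(A_\Gamma)_{ab}\cong\Z^{c}$, where $c$ is the number of odd-label components, is a positive word in the generators. So its image under the map to $\Z$ sending every generator to $1$ is positive, and therefore $\pi(Z(A_\Delta))$ is infinite. Lemma \ref{lem:densitycond}(ii) then makes $\Omega_\sigma$ dense.

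\textbf{Conclusion.} Theorem \ref{teo:A} now gives the equivalence of the density of $\Sigma^n$, the condition $\Sigma^n\cap-\Sigma^n\neq\emptyset$, and $(n-1)$-acyclicity of $\widehat{\Gamma}$. To replace (ii) by $\Sigma^n(A_\Gamma,\Z)\neq\emptyset$, we use symmetry. The map $v\mapsto v^{-1}$ on generators extends to an automorphism, since the relations reverse correctly, up to the anti-automorphism composed with inversion. Hence $\Sigma^n=-\Sigma^n$, as in the remark after Theorem \ref{teo:homotopicA}.

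The main obstacle is expected to be the symmetry step, checking that this automorphism sends each character $\chi$ to $-\chi$. A fallback is to note that only $\Sigma^n\neq\emptyset$ is needed: it gives an open set, hence a discrete character in $\Omega_\Z\cap\Sigma^n$. Proposition \ref{prop:homcond} applies to such a character using only $[\chi]\in\Sigma^n$ together with the hypotheses on the stabilizers, and yields acyclicity.
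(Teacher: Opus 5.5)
Your proposal is correct and follows the paper's proof: Theorem \ref{theorem:SimpleComplexGroups} for Hypothesis (*), Lemma \ref{lem:densitycond}(ii) for density, and $W_\Fsing$ as the barycentric subdivision of $\widehat{\Gamma}$. You replace the paper's citations with Deligne's finite $K(\pi,1)$ and a direct Garside-element argument, and your inversion automorphism $v\mapsto v^{-1}$ makes explicit the symmetry $\Sigma^n=-\Sigma^n$ that the paper uses only implicitly to pass from $\Sigma^n\cap-\Sigma^n\neq\emptyset$ to $\Sigma^n\neq\emptyset$. Drop the fallback, though: Proposition \ref{prop:homcond} only gives a contradiction when $H_i(\ker\chi,\Z)$ is finitely generated, and that needs $\pm[\chi]\in\Sigma^n$, not one-sided membership (e.g.\ in $BS(1,2)$ one of $\pm[\chi]$ lies in $\Sigma^1$ while $H_1(\ker\chi)\cong\Z[1/2]$).
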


\begin{proof}
Since $\abs{\mathcal{X}}$ has a minimal element, it is contractible. Moreover, $C(\mathcal{P})$ is contractible because we are assuming that $A_\Gamma$ satisfies the $K(\pi,1)$-conjecture. Every special subgroup of an Artin group satsfying the $K(\pi,1)$-conjecture also satisfies the $K(\pi,1)$-conjecture (c.f. \cite{Godelle-Paris}), so the stabilizers are all $\mathrm{FP}_\infty$.
Thus, Lemma \ref{Artin1} implies that the hypotheses of Theorem \ref{theorem:SimpleComplexGroups} are satisfied.

By \cite[Section 3]{Escartin-Martinez}, the image of $Z(A_\Delta)$ in the abelianization is infinite. Hence, using Lemma \ref{lem:densitycond} (ii) we see that also the hypotheses of Theorem \ref{teo:A} hold. Finally, recalling that $W_{\Fsing} = \{ A_\Delta \mid \emptyset \neq \Delta \subset \Gamma \text{ spherical} \}$ is isomorphic to $\widehat{\Gamma}$, completes the proof.
\end{proof}
\subsection{Graph products of groups}

\begin{defi}
Let $\Gamma$ be a finite simplicial graph and assume that the vertex set of $\Gamma$ is labeled by a family of groups $\{G_v\}_{v\in V(\Gamma)}$.  
The associated \emph{graph product} is defined by the presentation
$$\mathcal{G}(\Gamma,\{G_v\}_{v\in V(\Gamma)}) :=
\Big\langle\, G_v \text{ for all } v\in V(\Gamma) \;\big|\;
[G_u,G_v]=1 \text{ for all } \{u,v\}\in E(\Gamma) \,\Big\rangle.$$
\end{defi}

To simplify notation, we will denote graph products as $\mathcal{G}_\Gamma$ once the vertex groups are fixed.
Given a graph product $\mathcal{G}_\Gamma$ and an induced subgraph $\Delta\subset\Gamma$, the normal form theorem for graph products (cf.\ \cite{Green}) implies that the subgroup of $\mathcal{G}_\Gamma$ generated by $\{G_v\}_{v\in V(\Delta)}$
 is naturally isomorphic to $\mathcal{G}_\Delta$.  
These subgroups are called \emph{special subgroups}.

\medskip

In \cite{Davis}, a CAT(0) cube complex was constructed for arbitrary graph products of groups; this complex is now known as the \emph{Davis complex} associated to $\mathcal{G}_\Gamma$.  
It is constructed analogously to the Deligne complex for Artin groups.  
Consider the poset of all special subgroups associated to complete subgraphs:
$$\mathcal{P} \;=\; \{\, \mathcal{G}_\Delta \mid \Delta \subset \Gamma \text{ is complete} \,\},$$
ordered by inclusion: $\mathcal{G}_\Delta \le \mathcal{G}_\Omega$ whenever $\Delta \subset \Omega$.  
The Davis complex associated to $\mathcal{G}_\Gamma$ is defined as $C(\mathcal{P})$, the geometric realization of the coset poset $\mathcal{G}_\Gamma \mathcal{P}$.

\medskip

Let $\mathcal{X}$ be the scwol induced by $\mathcal{P}$.  
Define a simple complex of groups $\mathcal{G}(\mathcal{X})$ with local groups $\mathcal{G}_\Delta \in \mathcal{P}$ and morphisms given by the inclusions $\mathcal{G}_\Delta \hookrightarrow \mathcal{G}_\Omega$ for $\Delta \subset \Omega$.  
Exactly as in the Artin group case, one proves that $\mathcal{G}(\mathcal{X})$ is developable, that $\pi_1(\mathcal{G}(\mathcal{X})) \cong \mathcal{G}_\Gamma,$ and that $\widetilde{\mathcal{X}} \cong C(\mathcal{P})$.

\begin{teo}\label{theorem:SigmaGraphProd}
Let $\mathcal{G}_\Gamma$ be a graph product such that each $G_v$ is of type $\mathrm{FP}_\infty$ and
$\Sigma^n(G_v,\mathbb{Z}) \subseteq S(G_v)$ is dense for every $v \in V(\Gamma)$.  
Then the following are equivalent:
\begin{itemize}
\item[(i)] $\Sigma^n(\mathcal{G}_\Gamma,\mathbb{Z})$ is dense in $S(\mathcal{G}_\Gamma)$;
\item[(ii)] $\Sigma^n(\mathcal{G}_\Gamma,\mathbb{Z}) \cap -\Sigma^n(\mathcal{G}_\Gamma,\mathbb{Z})$ is dense in $S(\mathcal{G}_\Gamma)$;
\item[(iii)] $\Sigma^n(\mathcal{G}_\Gamma,\mathbb{Z}) \cap -\Sigma^n(\mathcal{G}_\Gamma,\mathbb{Z}) \neq \emptyset$;
\item[(iv)] $\widehat{\Gamma}$ is $(n-1)$-acyclic.
\end{itemize}
Here $\widehat{\Gamma}$ denotes the flag complex of $\G$.
\end{teo}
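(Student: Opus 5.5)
The plan is to apply Theorem \ref{teo:A} to the action of $\mathcal{G}_\Gamma$ on the Davis complex $C(\mathcal{P})$, via Theorem \ref{theorem:SimpleComplexGroups}. First I will check Hypothesis (*). The scwol $\mathcal{X}$ comes from the poset $\mathcal{P}$ of special subgroups $\mathcal{G}_\Delta$ with $\Delta$ complete. This poset has the minimum $\mathcal{G}_\emptyset=1$, so $|\mathcal{X}|$ is a cone and hence contractible. The Davis complex $C(\mathcal{P})$ is CAT(0) by \cite{Davis}, so it is contractible. As recalled above, $\widetilde{\mathcal{X}}\cong C(\mathcal{P})$ and $\pi_1(\mathcal{G}(\mathcal{X}))\cong\mathcal{G}_\Gamma$. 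For complete $\Delta$ the local group $\mathcal{G}_\Delta=\prod_{v\in\Delta}G_v$ is a finite direct product of $\mathrm{FP}_\infty$ groups, hence $\mathrm{FP}_\infty$. Theorem \ref{theorem:SimpleComplexGroups} then gives Hypothesis (*).

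Next I will identify $W_\Fsing$. The stabilizer of the cell indexed by $\mathcal{G}_\Delta$ is trivial exactly when $\Delta=\emptyset$, assuming the vertex groups are nontrivial (trivial vertex groups can be discarded, which changes the graph). So $W_\Fsing$ is the realization of the poset of nonempty cliques of $\Gamma$. This is the barycentric subdivision of $\widehat{\Gamma}$, so $(n-1)$-acyclicity of $W_\Fsing$ is the same as that of $\widehat{\Gamma}$.

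The main step is to verify that each $\Omega_\sigma$ is dense. Here $\sigma$ corresponds to a nonempty clique $\Delta$, and $H=\mathcal{G}_\Delta=\prod_{v\in\Delta}G_v$. I plan to use Lemma \ref{lem:densitycond} (iii), in a slightly adapted form.

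First, the abelianization of $\mathcal{G}_\Gamma$ is $\bigoplus_v (G_v)_{ab}$, and $H_{ab}=\bigoplus_{v\in\Delta}(G_v)_{ab}$. So $H_{ab}\to(\mathcal{G}_\Gamma)_{ab}$ is injective, and the restriction $S(\mathcal{G}_\Gamma)\to S(H)$ is induced by a linear surjection $\mathrm{Hom}(\mathcal{G}_\Gamma,\mathbb{R})\to\mathrm{Hom}(H,\mathbb{R})$ with a linear section. The preimage of a dense open set under such a surjective open linear map, once the kernel is removed, is again dense.

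Second, I need $\Sigma^n(H,\Z)$ to be dense in $S(H)$. This is where the real difficulty lies, since the hypothesis only gives density for the factors $G_v$. I would invoke the product formula for homological Sigma invariants, or at least the standard inclusion from the Meinert/Gehrke direction of the product conjecture (which holds over $\Z$ in the relevant inclusion). It says that a character $\chi=(\chi_v)_v$ lies in $\Sigma^n(\prod G_v,\Z)$ whenever, for every $v$ with $\chi_v\neq 0$, one has $[\chi_v]\in\Sigma^n(G_v,\Z)$. For this it suffices, for instance, that some coordinate is nonzero and all nonzero coordinates lie in $\Sigma^n(G_v,\Z)$. I would double check this via Theorem \ref{teo:MMVW}, or through the Künneth-type argument using the fact that the factors are $\mathrm{FP}_\infty$.

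Granting this, the set of $\chi$ with every $\chi_v\neq0$ and $[\chi_v]\in\Sigma^n(G_v,\Z)$ is a finite intersection of open dense conditions in $\mathrm{Hom}(H,\mathbb{R})$, so it is dense. If some $G_v$ has $S(G_v)=\emptyset$, that factor is simply ignored. A subtle case is when every factor of $H$ has finite abelianization. Then $S(H)=\emptyset$ and $\Omega_\sigma$ is empty, so this degenerate case has to be handled separately or excluded by the reading of the hypothesis "dense in $S(G_v)$ nonempty". I expect this to be the main obstacle.

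Once the $\Omega_\sigma$ are known to be dense, Theorem \ref{teo:A} yields the equivalence of (i), (ii), (iii) and (iv).
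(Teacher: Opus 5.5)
Your proposal is correct and follows essentially the paper's proof. The shared steps are: Hypothesis (*) via Theorem \ref{theorem:SimpleComplexGroups}; density of $\Sigma^n(\mathcal{G}_\Delta,\Z)$ from the Meinert-direction product inclusion; injectivity of $(\mathcal{G}_\Delta)_{\mathrm{ab}}\to(\mathcal{G}_\Gamma)_{\mathrm{ab}}$ (the paper uses the retraction $\mathcal{G}_\Gamma\to\mathcal{G}_\Delta$, you use the direct-sum formula); Lemma \ref{lem:densitycond}(iii); and identifying $W_\Fsing$ with the barycentric subdivision of $\widehat{\Gamma}$.

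The degenerate case you flag cannot be handled, only excluded: for the path $a-b-c$ with $G_a=G_c=\Z$ and $G_b=\Z/2$ one gets $F_2\times\Z/2$, where $\widehat{\Gamma}$ is contractible but $\Sigma^1=\emptyset$. So the hypothesis must be read as $S(G_v)\neq\emptyset$ for every $v$, which the paper implicitly assumes through the condition $S(H)\neq\emptyset$ in Lemma \ref{lem:densitycond}(iii), and which also removes your caveat about trivial vertex groups.
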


\begin{proof}
All stabilizers are conjugates of $\mathcal{G}_\Delta = \prod_{v\in V(\Delta)} G_v$, for $\emptyset\neq\Delta \subseteq \Gamma$ complete. Thus all stabilizers are of type $\mathrm{FP}_\infty$, and the hypotheses of Theorem \ref{theorem:SimpleComplexGroups} are satisfied. 

By the direct product formula for the Sigma invariants (cf.\ \cite{Bieri-Geoghegan}) we have $\Sigma^n(\mathcal{G}_\Delta,\mathbb{Z}) \subseteq S(\mathcal{G}_\Delta)$ is dense for each complete $\Delta\subset\Gamma$. Moreover, stabilizers are retracts: to see it observe that the map $\mathcal{G}_\Gamma\to\mathcal{G}_\Delta$ sending any $x\in G_v$ to itself if $v\in\Delta$ and to the identity in other case is well defined and restricts to the identity on $\mathcal{G}_\Delta$. And as a consequence, the map 
\[(\mathcal{G}_\Delta)_\mathrm{ab}\to(\mathcal{G}_\Gamma)_\mathrm{ab}\]
is injective. This means that Lemma \ref{lem:densitycond} (iii) applies so also the hypotheses of Theorem \ref{teo:A} hold.
To finish the proof, observe that $W_{\Fsing}$ is the subcomplex of $C(\mathcal{P})$ spanned by $\{\;\mathcal{G}_\Delta \mid \emptyset \neq \Delta \subset \Gamma, \mathcal{G}_\Delta \text{ infinite}\;\}$, and this subcomplex is isomorphic to $\widehat{\Gamma}$. 
\end{proof}

\end{document}